 \newtheorem{theorem}{Theorem}[section]
 \newtheorem{lemma}[theorem]{Lemma}
 \newtheorem{proposition}[theorem]{Proposition}
 \theoremstyle{assumption}
   \newtheorem{assumption}[theorem]{Assumption}
\theoremstyle{definition}
\newtheorem{definition}[theorem]{Definition}
 \theoremstyle{remark}
\newtheorem{remark}[theorem]{Remark}
\numberwithin{equation}{section}
 \newcommand{\eps}{\varepsilon}
\newcommand{\norm}[1]{\Vert#1\Vert}
 \newcommand{\abs}[1]{\left\vert#1\right\vert}
 \newcommand{\inner}[1]{\left(#1\right)}
\newcommand{\comi}[1]{\left<#1\right>}
 \newcommand{\normm}[1]{{ \vert\kern-0.25ex \vert\kern-0.25ex \vert #1 
     \vert\kern-0.25ex \vert\kern-0.25ex \vert}}
\def\@startsection#1#2#3#4#5#6{%
 \if@noskipsec \leavevmode \fi
 \par \@tempskipa #4\relax
 \@afterindentfalse
 \ifdim \@tempskipa <\z@ \@tempskipa -\@tempskipa \@afterindentfalse\fi
 \if@nobreak \everypar{}\else
     \addpenalty\@secpenalty\addvspace\@tempskipa\fi
 \@ifstar{\@dblarg{\@sect{#1}{\@m}{#3}{#4}{#5}{#6}}}%
         {\@dblarg{\@sect{#1}{#2}{#3}{#4}{#5}{#6}}}%
}
\def\@settitle{%
  \bgroup
  \centering
  \vglue1cm
  \fontsize{12}{15}\fontseries{b}\selectfont
  \uppercasenonmath\@title
  \@title
  \vskip20pt plus 6pt minus 8pt
  \egroup
}
\def\@setauthors{%
  \begingroup
  \trivlist
  \centering \bfseries
 \normalsize\@topsep30\p@\relax
  \advance\@topsep by -\baselineskip
  \item\relax
  \andify\authors
 {\rmfamily\authors}%
  \endtrivlist
  \endgroup
}
\def\@setaddresses{\par
  \nobreak \begingroup
\normalsize
  \def\author##1{\nobreak\addvspace\bigskipamount}%
  \def\\{\unskip, \ignorespaces}%
  \interlinepenalty\@M
  \def\address##1##2{\begingroup
    \par\addvspace\bigskipamount\noindent
    \@ifnotempty{##1}{(\ignorespaces##1\unskip) }%
    {\ignorespaces##2}\par\endgroup}%
  \def\curraddr##1##2{\begingroup
    \@ifnotempty{##2}{\nobreak\indent{\itshape Current address}%
      \@ifnotempty{##1}{, \ignorespaces##1\unskip}\/:\space
      ##2\par}\endgroup}%
  \def\email##1##2{\begingroup
    \@ifnotempty{##2}{\nobreak\noindent{\itshape E-mail address}%
      \@ifnotempty{##1}{, \ignorespaces##1\unskip}\/: 
       ##2\par}\endgroup}%
   \def\urladdr##1##2{\begingroup
    \@ifnotempty{##2}{\nobreak\indent{\itshape URL}%
      \@ifnotempty{##1}{, \ignorespaces##1\unskip}\/:\space
      \ttfamily##2\par}\endgroup}%
  \addresses
  \endgroup
}
 \renewcommand\section{\@startsection{section}{1}{\z@}%
{27pt plus 6pt minus 8pt}{14pt plus 6pt minus 8pt}
{\center\normalfont\large\bfseries}}
\renewcommand\subsection{\@startsection{subsection}{2}{\z@}%
{27pt plus 6pt minus 8pt}{14pt plus 6pt minus 8pt}
{ \center
\normalfont\bfseries}}
\def\subsubsection{\@startsection{subsubsection}{3}%
  \z@{.5\linespacing\@plus.7\linespacing}{-.5em}%
  {\normalfont\itshape}}
 \def\@mainsize{10}\def\@ptsize{0}%
  \def\@typesizes{%
    \or{5}{6}\or{6}{7}\or{7}{8}\or{8}{9}\or{9}{11}%
    \or{10}{13}
    \or{\@xipt}{13}\or{\@xiipt}{14}\or{\@xivpt}{17}%
    \or{\@xviipt}{20}\or{\@xxpt}{24}}%
\begin{document}

\title[Well-posedness in Gevrey  space  for 3D  Prandtl  equations]{Well-posedness in Gevrey function space  for 3D   Prandtl  equations without Structural Assumption}

\author[W.-X. Li, N. Masmoudi and T. Yang]{ Wei-Xi Li, Nader Masmoudi \and Tong Yang}

\date{}

\address[W.-X. Li]{School of Mathematics and Statistics,   Wuhan University,  Wuhan 430072, China
\& Hubei Key Laboratory of Computational Science, Wuhan University, Wuhan 430072,  China
  }

\email{
wei-xi.li@whu.edu.cn}

\address[N. Masmoudi]{ Courant Institute of Mathematical Sciences, New  York, USA 
and  Department of Mathematics, NYU Abu-Dhabi, UAE
	}

\email{
	masmoudi@cims.nyu.edu}

\address[T.Yang]{
	Department of Mathematics, City University of Hong Kong, Hong Kong
\&  School of Mathematics and Statistics,   Chongqing University,  Chongqing, China
  }

\email{
matyang@cityu.edu.hk}

\begin{abstract}
We establish the well-posedness  in Gevrey function space with optimal class of regularity  2   for   the   three dimensional Prandtl system without any structural assumption. 
The proof combines  in a novel way   a  new cancellation  in the system with   some of 
the  old ideas  to overcome the difficulty of the  loss of derivatives in the system.
This shows that the three dimensional  instabilities in the system leading to 
  ill-posedness  are not worse than the two  dimensional ones. 
\end{abstract}

\subjclass[2010]{35Q30, 35Q31}
\keywords{3D Prandtl boundary layer,  well-posedness theory, non-structual  assumption, Gevrey class}

 \maketitle


\section{Introduction and main results}

As the foundational system of boundary layer theories, Prandtl equation was 
derived by Prandtl  in 1904 from
the incompressible Navier-Stokes equation with no-slip boundary condition  for the description of  the behavior 
of fluid motion near the boundary when  viscosity vanishes. 
In fact, in this viscous to inviscid limit process,
there exists  a  boundary layer where the majority of the drag experienced by the solid body can be modelled by 
a `simplified'  system derived from the incompressible Navier-Stokes equations
for  balancing the inertial and frictional forces. Outside this layer, the viscosity can be basically
 neglected as it has no significant effect on the fluid so that
 the fluid motion can be modelled by the Euler equation. Even though there are fruitful mathematical theories developed since the seminal works by Oleinik in 1960s, most of the well-posedness
theories are limited to the two space dimensions  under Oleinik's monotonicity condition except the
classical work by Sammartino-Caflisch in 1998 in the framework of analytic functions and some
recent work in Gevrey function spaces.

Prandtl equation can be viewed as a typical example of partial differential equations with rich structure that includes mix-type and degenracy in dissipation. Hence, it provides many challenging 
mathematical problems and many of them remained unsolved after more than one hundred years from its
derivation. 

This paper aims to establish the well-posedness theory for  the
three dimensional  Prandtl equation  in  Gevrey spaces   with  the  optimal   class of  regularity
 2 that is implied by the instability results, cf. \cite{LY3D,LWY2}.  Compared with
the recent result in two space dimensions \cite{DG}, our new approach is more direct and robust to take care of the
loss of derivative in the two tangential directions.  In particular  it gives  a simpler  proof 
to  the  result in  two  dimensions  \cite{DG}. 
Hence, this  paper  is 
a complete answer to the well-posedness theory without
any structual assumption in the three dimensional setting and also shows the 
 optimality  of  the ill-posedness theories.

 Denote $\mathbb R_+^3=\big\{(x,y,z)\in\mathbb R^3; \  z>0\big\}$ and let     $(u, v)$ be   the tangential component and   $w$ be    the vertical component of the 
velocity field. Then the three dimensional  Prandtl system in  $\mathbb R_+^3$  
 reads
\begin{equation}\label{prandtl+}
\left\{
\begin{aligned}
&  \inner{\partial_t+u \partial_x   + v\partial_y +w\partial_z-\partial_{z}^2}u  + \partial_x p
=0,\quad t>0,~ (x,y,z)\in\mathbb R_+^3, \\
&  \inner{\partial_t+u\partial_x   + v\partial_y +w\partial_z -\partial_{z}^2}v+ \partial_y p
=0,\quad t>0,~ (x,y,z)\in\mathbb R_+^3, \\
&\partial_xu +\partial_yv +\partial_zw=0, \quad t>0,~ (x,y,z)\in\mathbb R_+^3, \\
&u|_{z=0} = v|_{z=0} = w|_{z=0} =0 , \quad  \lim_{z\to+\infty} (u,v) =\big(U(t,x,y), V(t,x,y)\big), \\
&u|_{t=0} =u_0, \quad v|_{t=0}=v_0, \quad   (x,y,z)\in\mathbb R_+^3,
\end{aligned}
\right.
\end{equation}
where   $(U(t,x,y), V(t,x,y))$ and $p(t,x,y)$ are the boundary traces  
of the tangential velocity field and pressure of the outer flow, 
satisfying 
\begin{eqnarray*}
\left\{
\begin{aligned}
&\partial_t U + U\partial_x U+V\partial_y U +\partial_x p=0,\\
&\partial_t V + U\partial_x V+V\partial_y V +\partial_y p=0.
\end{aligned}
\right.
\end{eqnarray*}
Here,    $p, U,V$ are given functions determined by the Euler flow. Note that   \eqref{prandtl+} is a degenerate parabolic system losing one order
derivative in the tangential variable. 
We refer to \cite{mamoudi,oleinik-3,prandtl} for the background
and  mathematical presentation of  this fundamental system.

So far, the well-posedness theories for the Prandtl equation are basically limited to the two space dimensions  except
the works by Sammartino-Caflisch \cite{Samm} in analytic function space and some recent works in Gevrey function space.  
In the   two   dimensional case, under Oleinik's monotonicity condition, there are mainly two
  analytic techniques for the well-posedness theories,  one referred to as  coordinate transformations and the  second one referred to  as cancellations.  Precisely,  the Crocco transformation was
used by Oleinik \cite{oleinik-3} for the unsteady layer to transfer the 
two dimensional  Prandtl equation into a degenerate parabolic equation. The cancellations in the convection terms were observed in recent years by two research groups independently, \cite{awxy,MW}, to overcome the difficulty of the loss of derivatives  in the system.  However, these two powerful analytic techniques are limited to the  two space dimension so far. 
 For 
three dimensions, much less is known in the well-posedness theories in Sobolev spaces. Let us also mention the work \cite{xin-zhang} on the global existence of weak solutions under an additional favorable pressure condition.

In two space dimensions, without the monotonicity condition, boundary layer separation is well expected and there are a lot of studies of the instability phenomena. Here, we only mention the works \cite{e-2} about the construction of
blowup solutions (see  also \cite{CGIM,CGM}), 
\cite{grenier} on the unstable Euler shear flow that yields instability of Prandtl equation,  \cite{GV-D, GV-N, guo} about the instability around
a   shear flow with a non-degenerate critical point,
\cite{GGN} on the instabilility even for Rayleigh's stable shear flow,
\cite{LWY2} about three space dimensional perturbation of shear flow when the initial data satisfying $U(z)\not\equiv kV(z)$ with 
a constant $k$. 
In fact, the instability result in \cite{GV-D} implies that the critical Gevrey index for well-posedness without structural
condition  is 2 and this is proved in two space dimensional  \cite{DG}. The well-posedness theories in function spaces of smooth functions was proved in \cite{Samm} with justification of the Prandtl ansatz when the data is analytic; and then it was studied in  \cite{GM} for  two space dimension with Gevrey index = $\frac 74$ that was improved in \cite{LY} to the Gevrey index  in $(1,2]$ with non-monotonic flow and then finalized in two space
dimension without any
structural condition in  \cite{DG}. In three dimensional space, we also have some work recently without  monotonicity assumption. In addition, recently, the separation singularity for stationary Prandtl system
was  studied in \cite{D-M} that justifies the Goldstein singularity.

All these results are in fact related to  the high Reynolds number limit for viscous fluid systems that is
important in both mathematics and physics. Without boundary effect, the mathematical theories
are now satisfactory (see for instance \cite{cons,mamoudi1} and references therein). 
The case with boundary is more complicated and interesting. For this,  
Kato in 1984 gave  a necessary and sufficient condition for weak convergence of viscous fluid to inviscid fluid in terms
of the vanishing energy dissipation rate in the region near the boundary. Recently, there is a series of works 
\cite{ Bardos-Titi, Bardos}  on such limit with relation to the Onsager conjecture. As for Prandtl boundary
layer, 
\cite{Mae} gave a proof when the  initial vorticity is supported away from the boundary for two dimensional  flow that was
generalized to three dimension in \cite{ZZ-2}; and recently, there are also some interesting
works on the limit to steady flow in  \cite{GN2} over a moving plate and 
in  \cite{G-S}  over a small distance, and  \cite{GM} about 
the Sobolev stability of steady shear flow in two dimensional space. For the time dependent problem, 
the stability
of Prandtl expansion in two space dimension  in Gevrey function space was studied in \cite{GMM}.

Without loss of generality we will  assume  that  $(U, V)\equiv 0.$ 
Extending our result  to the  case  of a general  outer  flow requires using some nontrivial weights similar to those in \cite{DG}.

 Then for the zero outer flow, 
 the Prandtl system \eqref{prandtl+}  can be written as   
\begin{equation}\label{prandtl}
\left\{
\begin{aligned}
&  \inner{\partial_t+u \partial_x   + v\partial_y +w\partial_z-\partial_{z}^2}u 
=0,\quad t>0,~ (x,y,z)\in\mathbb R_+^3, \\
&  \inner{\partial_t+u\partial_x   + v\partial_y +w\partial_z-\partial_{z}^2}v  =0,\quad t>0,~ (x,y,z)\in\mathbb R_+^3,  \\
&(u,v)|_{z=0} =(0,0), \quad  \lim_{z\to+\infty} (u,v) =\big(0, 0\big), \\
&(u,v)|_{t=0} =(u_0, v_0), \quad   (x,y,z)\in\mathbb R_+^3,
\end{aligned}
\right.
\end{equation}
with 
\begin{eqnarray*}
	w(t,x,y,z)=- \int_0^z\partial_x u(t,x, y,\tilde z)\,d\tilde z- \int_0^z\partial_y v(t,x, y,\tilde z)\,d\tilde z.
\end{eqnarray*}
 
Before stating our main result concerning with the well-posedness of the Prandtl system \eqref{prandtl},   we first list some notations to be used frequently throughout the paper and then introduce the   Gevrey function space.  
 
\noindent {\bf Notations}.  Throughout the paper we will  use without confusion  $\norm{\cdot}_{L^2}$ and $\inner{\cdot,\ \cdot}_{L^2}$ to denote the norm and inner product of  $L^2=L^2(\mathbb R_+^3),$  and use the notations $\norm{\cdot}_{L^2(\mathbb R_{x,y}^2)}$ and  $\inner{\cdot,\ \cdot}_{L^2(\mathbb R_{x,y}^2)}$   when the variables are specified.   Similarly for $L^\infty.$  Moreover  we also  use  $L_{x,y}^\infty(L_z^2)=L^\infty\inner{\mathbb R^2; L^2(\mathbb R_+)}$ to stands for the classical Sobolev space, so does  the Sobolev space $L_{x,y}^2(L_z^\infty).$    In the following discussion by $\partial^\alpha$ we always mean   $\partial^\alpha=\partial_x^{\alpha_1}\partial_y^{\alpha_2}$ with each multi-index  $\alpha=(\alpha_1,\alpha_2)\in\mathbb Z_+^2.$

  \begin{definition} 
\label{defgev}  Let $\ell>1/2 $ be a given number.   With each pair $(\rho,\sigma)$, $\rho>0$ and $\sigma\geq 1, $  a Banach space $X_{\rho,\sigma}$   consists of all  smooth  vector-valued functions
 $ (u, v)$  such that the Gevrey norm  $\norm{(u, v)}_{\rho,\sigma}<+\infty,$  where    $\norm{\cdot}_{\rho,\sigma}$ is defined below.   Recalling  $\partial^\alpha=\partial_x^{\alpha_1}\partial_y^{\alpha_2}$ we define 
\begin{eqnarray*}
\begin{aligned}
	 \norm{(u,v)}_{\rho,\sigma}= &\sup_{\stackrel{0\leq j\leq 5}{\abs\alpha+j\geq 7}} \frac{\rho^{\abs\alpha+j- 7}}{[\inner{\abs\alpha+j- 7}!]^{\sigma}} \Big(\big\|\comi z^{\ell+j} \partial^\alpha \partial_z^j  u\big\|_{L^2}+\big\|\comi z^{\ell+j} \partial^\alpha \partial_z^j    v\big\|_{L^2}\Big)\\
	 &+\sup_{\stackrel{0\leq j\leq 5}{\abs\alpha+j\leq 6}}   \Big(\big\|\comi z^{\ell+j} \partial^\alpha \partial_z^j  u \big\|_{L^2}+\big\|\comi z^{\ell+j} \partial^\alpha \partial_z^j  v \big\|_{L^2}\Big),
	 \end{aligned}
\end{eqnarray*}
where and throughout the paper $\comi z=(1+\abs z^2)^{1/2}.$  We call $\sigma$ the Gevrey index. 
\end{definition}

\begin{remark}
	Note that  $X_{\rho,\sigma}$ is a partial Gevrey function space.   By partial Gevrey function space,  we mean it  consists  functions that are of Gevrey class in tangential variables $x,y$ and lie in Sobolev space for normal variable $z$.       
\end{remark}

We will look for the solutions to \eqref{prandtl} in the Gevrey function space $X_{\rho,\sigma}$.   For this, the  initial data $(u_0,v_0)$  satisfy the following compatibility conditions
 \begin{equation}\label{comcon}
 \left\{
 \begin{aligned}
 &(u_0, v_0)|_{z=0}=(0,0),~~\lim_{z\rightarrow +\infty}(u_0,v_0)|=(0,0),~~(\partial_{z}^2u_0, \partial_{z}^2v_0)|_{z=0}=(0,0),\\
 &\partial_z^4u_0\big |_{z=0}=  (\partial_z u_0)\inner{\partial_x \partial_z u_0-\partial_y\partial_z v_0} |_{z=0}+2 (\partial_z v_0) \partial_y\partial_z u_0|_{z=0},\\
 &\partial_z^4v_0\big |_{z=0}= (\partial_z v_0)\inner{\partial_y\partial_z v_0-\partial_x\partial_z u_0} |_{z=0}+2(\partial_z u_0)\partial_x\partial_z u_0 |_{z=0}.	
	\end{aligned}
	\right.
	\end{equation}
 
 The main result can be stated as follows.
 \begin{theorem}
 \label{maithm1}	
 	Let $1<\sigma\leq  2.$ Suppose  the initial datum      $(u_0,v_0)$  belongs to $ X_{2\rho_0,\sigma}$ for some $\rho_0>0,$  and satisfies  the    compatibility condition   \eqref{comcon}.  
	 Then the system \eqref{prandtl} admits a unique solution $(u,v) \in L^\infty\big([0,T];~X_{\rho,\sigma}\big)$ for some $T>0$ and some $0<\rho<2\rho_0.$ 
 \end{theorem}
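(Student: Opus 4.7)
My plan is to obtain a short-time a priori Gevrey-$\sigma$ bound for \eqref{prandtl} by a weighted energy method built around a cancellation, and then pass from this bound to existence and uniqueness by a parabolic regularisation. The main difficulty is the one-tangential-derivative loss in the convection $u\partial_x$, $v\partial_y$, and, through $w = -\int_0^z(\partial_x u + \partial_y v)\,d\tilde z$, also $w\partial_z$: differentiating the system $\partial^\alpha\partial_z^j$ times and carrying out the naive weighted $L^2$ estimate produces the three worst terms
\begin{equation*}
(\partial^\alpha\partial_z^j u)\partial_x u,\qquad (\partial^\alpha\partial_z^j v)\partial_y u,\qquad (\partial^\alpha\partial_z^j w)\partial_z u,
\end{equation*}
and their analogues in the $v$-equation, each carrying one more tangential derivative than the norm at level $|\alpha|+j$ can control. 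Neither Oleinik's Crocco change of variables nor the 2D cancellations of \cite{awxy,MW,DG} apply directly, since the 3D system is vector-valued and the two components are nontrivially coupled through $w$.

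The central device is to introduce, for each $(\alpha, j)$ with $|\alpha| + j \geq 7$, a coupled pair of good unknowns of schematic form
\begin{equation*}
G^u_{\alpha,j} = \partial^\alpha \partial_z^j u - (\partial_z u)\Phi_{\alpha,j}, \qquad G^v_{\alpha,j} = \partial^\alpha \partial_z^j v - (\partial_z v)\Psi_{\alpha,j},
\end{equation*}
with correctors $\Phi_{\alpha,j},\Psi_{\alpha,j}$ built from lower-order tangential derivatives of $(u,v)$ via auxiliary transport equations driven by the flow, designed so that the three bad terms in \emph{both} momentum equations cancel simultaneously from the equations satisfied by $G^u_{\alpha,j}$ and $G^v_{\alpha,j}$. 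In 2D a single scalar good unknown of this type suffices, but the 3D coupling through $w$ and through the mixed terms $(\partial^\alpha\partial_z^j v)\partial_y u$ and $(\partial^\alpha\partial_z^j u)\partial_x v$ forces the two correctors to be designed jointly; this joint cancellation is, I expect, the genuinely new algebraic identity announced in the abstract and the most delicate step. The compatibility conditions \eqref{comcon} are precisely what makes the boundary values of $\partial_z^4 u$ and $\partial_z^4 v$ vanish in the correct combinations at $z = 0$, hence what keeps $\Phi_{\alpha,j}, \Psi_{\alpha,j}$ smooth up to the boundary throughout the evolution.

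Granting the cancellation, I would run the weighted energy estimate in $L^2(\comi{z}^{2(\ell+j)}\,dxdydz)$ on $(G^u_{\alpha,j}, G^v_{\alpha,j})$, using the dissipation from $-\partial_z^2$ to absorb commutators with the weight and boundary terms at $z = 0$. After multiplying by $\rho^{|\alpha|+j-7}/[(|\alpha|+j-7)!]^{\sigma}$ and summing over $(\alpha, j)$, the remaining nonlinear contributions become discrete convolutions that close by the standard Gevrey-algebra inequality valid for $\sigma\geq 1$, while the borderline terms left over from the cancellation are controlled by an interpolation argument that closes exactly at $\sigma = 2$, consistent with the sharpness of the instability result of \cite{GV-D}. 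The outcome is a differential inequality
\begin{equation*}
\frac{d}{dt}\norm{(u,v)}_{\rho(t),\sigma}^2 + \bigl(-\dot\rho(t)\bigr)\norm{(u,v)}_{\rho(t),\sigma}^2 \leq C\bigl(1 + \norm{(u,v)}_{\rho(t),\sigma}^2\bigr)\norm{(u,v)}_{\rho(t),\sigma}^2,
\end{equation*}
and choosing $\rho(t) = 2\rho_0 - Kt$ with $K$ large yields the a priori bound on some interval $[0, T]$. Existence then follows by adding a tangential regularisation $-\eps(\partial_x^2 + \partial_y^2)$, producing smooth approximate solutions that satisfy the same Gevrey estimate uniformly in $\eps$, and extracting a limit; uniqueness is obtained by applying the same scheme, with the same good unknowns, to the difference of two solutions in a strictly smaller Gevrey radius to accommodate the one-derivative shift inherent to the linearised equation.
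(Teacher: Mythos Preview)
Your outline matches the paper at the coarsest level---good unknowns to kill the $w\partial_z$ loss, weighted energy, tangential parabolic regularisation---but the specific mechanism you sketch is not the one that works, and the closing inequality you write is the analytic one, not the Gevrey-$2$ one. First, the three terms you list are harmless: $(\partial^\alpha\partial_z^j u)\partial_x u$ sits at level $|\alpha|+j$. The single genuinely bad term is $(\partial^\alpha w)\partial_z u$, since $\partial^\alpha w=-\int_0^z(\partial_x\partial^\alpha u+\partial_y\partial^\alpha v)\,d\tilde z$ carries one extra tangential derivative. Second, the paper does not build correctors $\Phi_{\alpha,j}$ achieving a full cancellation level by level. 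It introduces a \emph{single} auxiliary function $\mathcal U$ (and a companion $\widetilde{\mathcal U}$) solving its own parabolic problem with source $-\partial_x w$, then sets $\psi_m=\partial_x^m u-(\partial_z u)\int_0^z\partial_x^{m-1}\mathcal U\,d\tilde z$. The equation for $\psi_m$ is clean, but one must now estimate $\partial_x^{m-1}\mathcal U$, and the equation for $\mathcal U$ \emph{still loses one derivative}, through a source $\partial_x\lambda+\partial_y\delta$ with $\lambda=\partial_x u-(\partial_z u)\int_0^z\mathcal U$ and $\delta=\partial_x v-(\partial_z v)\int_0^z\mathcal U$. The decisive point is that the equations for $\lambda,\delta$ lose no further derivative, so $(\mathcal U,\lambda,\delta)$ form a two-step ladder in which the net loss is one half. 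This is encoded by inserting an extra factor $|\alpha|$ in front of $\|\partial^\alpha\lambda\|,\|\partial^\alpha\delta\|$ in the norm.

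Because of this two-tier structure the a priori bound is not a differential inequality with a shrinking radius $\rho(t)=2\rho_0-Kt$; that device closes only when a full derivative is gained per unit of radius lost (the analytic case). The paper instead obtains the abstract Cauchy--Kowalewski form
\[
|\vec a(t)|_{\rho,\sigma}^2 \le C\|(u_0,v_0)\|_{2\rho_0,\sigma}^2+C\int_0^t\bigl(|\vec a|_{\rho,\sigma}^2+|\vec a|_{\rho,\sigma}^4\bigr)\,ds+C\int_0^t\frac{|\vec a(s)|_{\tilde\rho,\sigma}^2}{\tilde\rho-\rho}\,ds
\]
for all $0<\rho<\tilde\rho<\rho_0$, with $\vec a=(u,v,\mathcal U,\widetilde{\mathcal U},\lambda,\tilde\lambda,\delta,\tilde\delta)$, and closes it by a bootstrap on the weighted norm $\sup_{\rho,t}\bigl((\rho_0-\rho-\tau t)/(\rho_0-\rho)\bigr)^{1/2}|\vec a(t)|_{\rho,\sigma}$. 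The extra $|\alpha|$ in the $\lambda,\delta$-norm is exactly what turns the borderline term into $m^{\sigma-1}\tilde\rho^{-1}(\rho/\tilde\rho)^{m}\le C/(\tilde\rho-\rho)$, which holds only for $\sigma\le 2$; without it you get $m^{\sigma}/(\tilde\rho-\rho)$ and the estimate does not close for any $\sigma>1$. So the genuine gap in your proposal is the absence of the auxiliary PDE for $\mathcal U$ and of the two-tier norm; your single inequality cannot be derived, and a ``joint corrector'' achieving full simultaneous cancellation does not exist in 3D (if it did, the system would be well-posed in Sobolev). Your regularisation and uniqueness sketches are fine and match the paper.
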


The rest of the paper is organized as follows.      We will prove  in Sections  \ref{secapri}-\ref{seclamdelta}   a priori estimates.    The proof of   the well-posedness for the Prandtl system is given in the last section.

\section{A priori estimate}\label{secapri}

Suppose $(u,v)\in L^\infty\inner{[0, T];~X_{\rho_0,\sigma}}$ is a solution  to the Prandtl system \eqref{prandtl} with initial datum $(u_0,v_0)\in X_{2\rho_0,\sigma}.$  This section  and Sections \ref{sec5}-\ref{seclamdelta}  are to  derive a priori estimate for  $(u,v)$.

 \subsection{Methodologies for a toy model}  Our argument is inspired by the  abstract Cauchy-Kowalewski theorem,  whose statement in general Banach scale can be found in  \cite{asa}  and the references therein; See \cite{LY,Samm}  as well for its application  to the Wellposedness theory of Prandtl equations  in analytic or Gevrey  spaces .  Let $(Y_\rho, \abs{\cdot}_\rho), 0<\rho\leq\rho_0,$ be a    Banach scale which means  $Y_\rho, 0<\rho\leq\rho_0,$ is a family of Banach spaces with norm $\abs{\cdot}_\rho$ such that $Y_{\rho_1}\subset Y_{\rho_2}$ for $0<\rho_2\leq \rho_1\leq \rho_0$. Consider  the initial value problem, with $F$ a given function, 
 \begin{eqnarray*}
 	\partial_t\vec u=F(t, \nabla \vec u),\quad\vec u|_{t=0}=\vec u_0,
 \end{eqnarray*}
 where the unknown $\vec u=\vec u(t, p)$ is an vector-valued function and $\nabla=\nabla_p.$
 Note this equation loses one order derivative and its existence theory follows from the classical Cauchy-Kowalewski theorem provided $F$ is an analytic function; meanwhile we can apply the  abstract Cauchy-Kowalewski theorem to derive  its existence in the Banach scale of {\it analytic} functions (see \cite{Samm} for instance) and the key part is to find an inequality of the following type 
 \begin{equation}\label{getm}
	   |\vec u(t)|_{ \rho}  \leq C  \int_0^{t}   \frac{ |\vec u(s)|_{\tilde \rho} }{\tilde\rho-\rho} ds+\textrm{l.o.t} +\textrm{initial data},
\end{equation} 
with $\rho<\tilde\rho$,  where here and below $\textrm{l.o.t}  $ refers to lower order terms that are easier to control and $ \textrm{initial data} $  refers to terms that are controlled by the 
 initial data. 
The intrinsic idea behind the abstract Cauchy-Kowalewski theorem is to overcome the loss of derivatives by shrinking the radius $\rho.$  More generally,  the existence theory can be extended to a Banach scale of Gevrey space   rather than of analytic space  when considering the following  
  \begin{eqnarray}\label{g2}
 	\partial_t^2 \vec u=F(t, \nabla \vec u),\quad\vec u|_{t=0}=\vec u_0, \ ~  \partial _t\vec u |_{t=0}=\vec v_0.
 \end{eqnarray}
 In fact, the above equation is equivalent to  
   \begin{equation}\label{20Jul}
   \left\{
   \begin{aligned}
   &\partial_t  \vec u=\vec v,\\
 	&\partial_t  \vec v= F(t, \nabla \vec u),\\
 	& \vec u|_{t=0}=\vec u_0,\quad \vec v|_{t=0}=\vec v_0,
 	\end{aligned}
 	\right.
 \end{equation} 
 and roughly speaking we will  lose only  1/2 rather than 1  order derivatives in each equation of \eqref{20Jul} if   $\vec v$ behaves like the $1/2$ order derivative of $\vec u.$   Then following the argument  used in analytic case,   the estimate \eqref{getm} still holds with the analytic norm therein replaced by a Gevrey norm with index $\leq 2$.  We will explain it in detail in the next paragraph. 
 
 Similar to \eqref{g2},   we replace $\partial_t$ there by a linear operator, and consider a  toy model of   Prandtl equation:
  \begin{eqnarray}\label{ef}
  \big(\partial_t+ \partial_x+ \partial_y+ \partial_z-\partial_z^2\big) ^2 \varphi   = G,  \end{eqnarray}
 with $G$ being the  linear combinations of the following types 
 \begin{eqnarray*}
    \partial^{\beta}\varphi, \quad \abs\beta\leq 1,  
 \end{eqnarray*} 
  with $\partial^\beta=\partial_x^{\beta_1}\partial_y^{\beta_2}.$
 Moreover   as explained in the previous  paragraph we rewrite \eqref{ef} as
 \begin{eqnarray}\label{eqf}
  \left\{
  \begin{aligned}
 &\big(\partial_t+ \partial_x+ \partial_y+\partial_z -\partial_z^2\big)    \varphi =\xi,\\
 &\big(\partial_t+ \partial_x+ \partial_y+\partial_z -\partial_z^2\big)    \xi =G,
 	\end{aligned}
 	\right.
 \end{eqnarray} 
 and consider $\xi$ to behave like the $1/2$ order derivative of $\varphi;$ that is,  $\xi\sim \Lambda_{x,y}^{1/2}\varphi $ where we denote by $\Lambda_{x,y}$   the Fourier multiplier of symbol $(\xi^2+\eta^2)^{1/2}$ with $(\xi,\eta)$ the dual variable of $(x,y)$.    Now let $\varphi\in Y_\rho$ with $Y_\rho$ being the Gevrey space of index 2, that is,   
 \begin{eqnarray*}
 	\norm{\partial^\alpha\varphi}_{L^2} \leq C (\abs\alpha!)^2 /\rho^{\abs\alpha}
 \end{eqnarray*}
 with $C$ independent of $\alpha\in \mathbb Z_+^2$. Then   the quantity $\frac{\rho^{\abs\alpha}}{(\abs\alpha!)^2} \norm{\partial^\alpha \varphi}_{L^2} $ is uniformly bounded with respect to $\alpha\in\mathbb Z_+^2.$    Then by interpolation inequality we have, supposing $\rho\leq 1$ without loss of generality,  
 \begin{multline}\label{adfac}
 	 \norm{\Lambda_{x,y}^{1/2}\partial^\alpha\varphi}_{L^2}\leq C\Big[\Big((\abs\alpha+1)!\Big)^2 /\rho^{\abs\alpha+1}\Big]^{1/2}\Big[(\abs\alpha!)^2 /\rho^{\abs\alpha} \Big]^{1/2}\\ \leq \tilde C \frac{\Big((\abs\alpha+1)!\Big)^2 }{\rho^{\abs\alpha+1}}\frac{1}{\abs\alpha}
 \end{multline}
 with $\tilde C$ independent of $\alpha. $     This motivates us to define 
 $\abs{\cdot}_\rho$ for each $\rho>0$  by 
 \begin{eqnarray}\label{aunor}
 |\vec b|_{\rho}=\sup_{\abs\alpha\geq 0}\frac{\rho^{\abs\alpha}}{(\abs\alpha!)^2} \norm{\partial^\alpha \varphi}_{L^2} +\sup_{\abs\alpha\geq 0}\frac{\rho^{\abs\alpha+1}}{[(\abs\alpha+1)!]^2}\abs\alpha \norm{\partial^\alpha \xi}_{L^2},
 \end{eqnarray}
 where  we use the notation $\vec b=(\varphi,\xi)$ with $\xi$ given in \eqref{eqf}  and  $\partial^\alpha=\partial_x^{\alpha_1}\partial_y^{\alpha_2}.$   Note in the above definition  there is an additional factor  $\abs\alpha$ before the norms $\norm{\partial^\alpha \xi}_{L^2} $  which follows from \eqref{adfac} since $\xi\sim\Lambda_{x,y}^{1/2}\varphi.$ 
   
Next we will derive the estimate for   the norm $|\vec b|_\rho$ defined above. Suppose $ \partial_z\varphi|_{z=0}=\xi|_{z=0}=0$.  Then
  applying  the standard energy method to \eqref{eqf} we have, for   any $\abs\alpha\geq 1$ and any $0<\rho<1$,   
 \begin{eqnarray*}
 \begin{aligned}
 &	\frac{\rho^{2\abs\alpha}}{(\abs\alpha!)^{4}} \norm{\partial^\alpha \varphi (t)}_{L^2}^2  +\frac{\rho^{2\inner{\abs\alpha+1}}}{[(\abs\alpha+1)!]^{4}} \abs\alpha^2 \norm{\partial^\alpha \xi(t)}_{L^2}^2 \\
 &\quad+\int_0^t \frac{\rho^{2\abs\alpha}}{(\abs\alpha!)^{4}} \norm{\partial_z\partial^\alpha \varphi (s)}_{L^2}^2ds+\int_0^t \frac{\rho^{2\inner{\abs\alpha+1}}}{[(\abs\alpha+1)!]^{4}} \abs\alpha^2 \norm{\partial_z\partial^\alpha \xi(s)}_{L^2}^2ds   \\
 &\qquad	\leq    2  \frac{\rho^{2 \abs\alpha}}{(\abs\alpha!)^{4}} \int_0^{t}   \norm{\partial^\alpha \xi (s)}_{L^2}\norm{\partial^{\alpha} \varphi (s)}_{L^2}  ds \\
 	&\qquad\quad+ C \abs\alpha^2 \frac{\rho^{2(\abs\alpha+1)}}{[(\abs\alpha+1)!]^{4}} \int_0^{t} \sum_{\abs\beta=1}  \norm{\partial^{\alpha+\beta} \varphi (s)}_{L^2}   \norm{\partial^\alpha \xi (s)}_{L^2} ds \\
 	&\qquad\quad+\textrm{l.o.t} +\textrm{initial data}.
 	\end{aligned}
 \end{eqnarray*}
  From the definition of $\abs{\cdot}_r$ given in \eqref{aunor},   it follows that 
 \begin{eqnarray*}
 \forall\   r>0,	 ~ \forall~ j \geq 1, \quad \norm{\partial^j \varphi }_{L^2} \leq  \frac{(j!)^2}{   r^j}|\vec b|_{  r} \  \textrm{and}\ \norm{\partial^j \xi }_{L^2} \leq \frac{1}{j} \frac{[(j+1)!]^2}{  r^{j+1}}|\vec b|_{  r}.
 \end{eqnarray*} 
We use the above estimates to compute, for any $\tilde\rho$ with $0<\rho<\tilde\rho\leq 1,$
\begin{eqnarray*}
\begin{aligned}
	&\frac{\rho^{2 \abs\alpha}}{(\abs\alpha!)^{4}} \int_0^{t}   \norm{\partial^\alpha \xi(s)}_{L^2}\norm{\partial^{\alpha} \varphi (s)}_{L^2}  ds \\
	&\leq \frac{\rho^{2 \abs\alpha}}{(\abs\alpha!)^{4}} \int_0^{t} \frac{1}{\abs\alpha} \frac{[(\abs\alpha+1)!]^2}{{\tilde \rho}^{\abs\alpha+1}}    \frac{(\abs\alpha !)^2}{{\tilde \rho}^{\abs\alpha}}|\vec b(s)|_{  \tilde\rho}^2  ds\\
	&\leq    4 \int_0^{t}   \frac{\abs\alpha }{\tilde \rho}  \frac{\rho^{2 \abs\alpha}}{\tilde \rho^{2\abs\alpha}}    |\vec b(s)|_{  \tilde\rho}^2  ds \leq    4 \int_0^{t}   \frac{\abs\alpha }{\tilde \rho}  \Big(\frac{\rho}{\tilde \rho}\Big)^{ \abs\alpha}   |\vec b(s)|_{  \tilde\rho}^2  ds\leq 4\int_0^{t}   \frac{   |\vec b(s)|_{  \tilde\rho}^2}{\tilde \rho-\rho} ds,
	\end{aligned}
\end{eqnarray*}
the last inequality using the fact  that    
 for any integer $k\geq 1$ and for any  pair $(\rho,\tilde \rho)$ with $0<\rho<\tilde\rho\leq 1,$  
\begin{equation}
\label{factor}
   k\inner{\frac{\rho}{\tilde\rho}}^k\leq 	\frac{k}{\tilde\rho} \inner{\frac{\rho}{\tilde\rho}}^k\leq\frac{1}{\tilde\rho-\rho}.
\end{equation}
Note that  the first inequality in \eqref{factor} is obvious since $\tilde \rho\leq 1$ and the second one follows from the fact that
\begin{eqnarray*}
	\frac{1}{1-\frac{\rho}{\tilde\rho}}=\sum_{j=0}^\infty \Big(\frac{\rho}{\tilde\rho}\Big)^j\geq  k \Big(\frac{\rho}{\tilde\rho}\Big)^k. 
\end{eqnarray*}
Applying the similar argument as above gives also
\begin{multline*}
	\abs\alpha^2 \frac{\rho^{2(\abs\alpha+1)}}{[(\abs\alpha+1)!]^{4}} \int_0^{t} \sum_{\abs\beta=1}  \norm{\partial^{\alpha+\beta} \varphi (s)}_{L^2}  \norm{\partial^\alpha \xi (s)}_{L^2} ds\\
	 \leq 2 \int_0^{t}    \abs\alpha \frac{\rho^{2( \abs\alpha+1)}}{\tilde \rho^{2(\abs\alpha+1)}}       |\vec b(s)|_{  \tilde\rho}^2  ds\leq 2 \int_0^{t}   \frac{     |\vec b(s)|_{  \tilde\rho}^2}{\tilde \rho-\rho} ds.
\end{multline*}
 Then
combining the above inequalities we conclude
\begin{eqnarray*}
	   |\vec b(t)|_{ \rho}^2 \leq C  \int_0^{t}   \frac{ |\vec b(s)|_{\tilde \rho}^2 }{\tilde\rho-\rho} ds+\textrm{l.o.t} +\textrm{initial data}.
\end{eqnarray*}
  This estimate enables us to follow the argument for proving abstract Cauchy-Kowalewski theorem,   seeing  for instance \cite[Section 8]{LY} and Section \ref{sec8}  below for the detailed discussion,   to obtain the existence of solution to \eqref{eqf}.  
  
  Finally we remark that in the above argument we do not use the diffusion property of the system \eqref{eqf} in normal variable $z$     when dealing with the tangential derivatives of $\varphi$ and $\xi$.     
 
\subsection{Auxilliary functions  and statement of  a priori estimate}\label{subaux}
Inspired by Dietert and G\'erard-Varet's work   \cite{DG}  let 
 $\mathcal U$ be  a solution to the linear initial-boundary problem
 		\begin{eqnarray}\label{mau}
 		\left\{
 		\begin{aligned}
&   \big(\partial_t+u\partial_x+v\partial_y+w\partial_z-\partial_z^2\big)    \int_0^z\mathcal U(t,x,y,\tilde z) d\tilde z  =  -\partial_x w(t,x,y,z),\\
& \mathcal U|_{t=0}=0, \quad \partial_z\mathcal U|_{z=0}=\mathcal U|_{z\rightarrow+\infty}=0.
   \end{aligned}
   \right.
	\end{eqnarray} 
The existence of  $\mathcal U$  just follows from the standard parabolic theory.  In fact we first construct a solution $f$ to the following
\begin{eqnarray} \label{eqoff}
 		\left\{
 		\begin{aligned}
&    \big(\partial_t+u\partial_x+v\partial_y+w\partial_z-\partial_z^2\big)     f  =-\partial_x w \\
& f|_{t=0}=0, \quad f|_{z=0}=\partial_zf|_{z\rightarrow+\infty}=0,
   \end{aligned}
   \right.
	\end{eqnarray} 
	and then define $\mathcal U=\partial_z f$ which will solve \eqref{mau}.   Similarly let  $\widetilde{\mathcal U}$ solve
\begin{eqnarray}\label{mav}
 		\left\{
 		\begin{aligned}
&    \big(\partial_t+u\partial_x+v\partial_y+w\partial_z-\partial_z^2\big)    \int_0^z\widetilde{\mathcal U}(t,x,y,\tilde z) d\tilde z  =  -\partial_y w(t,x,y,z),\\
& \widetilde{\mathcal U}|_{t=0}=0, \quad \partial_z\widetilde{\mathcal U}|_{z=0}=\widetilde{\mathcal U}|_{z\rightarrow+\infty}=0.
   \end{aligned}
   \right.
	\end{eqnarray} 
Moreover  we define    $\lambda, \delta$ and $\tilde\lambda, \tilde \delta$   as follows 
\begin{eqnarray}\label{laga}
\left\{
\begin{aligned}
	&\lambda= \partial_x u-(\partial_z u)\int_0^z\mathcal U  d\tilde z,\qquad \tilde \lambda= \partial_y u-(\partial_z u)\int_0^z\widetilde{\mathcal U}  d\tilde z,\\
	&\delta= \partial_x v-(\partial_z v)\int_0^z\mathcal U  d\tilde z,\qquad \tilde\delta= \partial_y v-(\partial_z v)\int_0^z\widetilde{\mathcal U} d\tilde z, 
	\end{aligned}
\right.
\end{eqnarray}
that are to be used to  derive the estimate on $\mathcal U$ and $\widetilde{\mathcal U}.$  As to be seen later    a new type of cancellation will be applied 
when deriving the equation for $\lambda$ and this enables us to eliminate the bad term involving $\partial_x w$ that loses one order derivative.   The idea of observing cancellation mechanism to overcome the lost
of derivative was initiated independently by  Alexandre-Wang-Xu-Yang \cite{awxy} and Masmoudi-Wong \cite{MW}, where they considered the two-dimensional case and  introduced the good-unknown of the type
$\partial_x^m\partial_zu-\big(\partial_z^2u/\partial_zu\big) \partial_x^mu$ for $m\geq 1$,  under  the Oleinik's monotonicity  condition $\partial_zu \neq  0$; see also \cite{GM, LY} for other type of  cancellations   when exploiting the well-posedness theory for Prandtl equation without analyticity or monotonicity.  Note that we can not  apply directly the above good-unknown  in our case since $\partial_z u$ may vanish,  and the novelty here  is the introduction of the auxilliary functions $\lambda,\delta,\cdots,$  in \eqref{laga},   which are the generalized case of the good-unknown aforementioned.  Our argument will combine  a  new cancellation for  these auxilliary functions with the idea of introducing $\mathcal U$ initiated by  Dietert and G\'erard-Varet \cite{DG}.  In fact these auxilliary functions play an import role when performing the energy estimate for $\mathcal U.$  Precisely,  if we    apply $\partial_z$  to \eqref{mau}  then we have an evolution equation for $\mathcal U$  with  $\lambda$ and $\delta$   as   source terms that lead to the loss of one order derivatives.  Our  observation is  that we only lose half rather than one order derivative in the equation for $\mathcal U$  since we have additionally evolution equations for $\lambda$ and $\delta$ that do not lose derivatives anymore.   This  enables  us  to  close the energy  estimates for $\mathcal U$ in the Gevrey space of index up to $ 2$ rather than in analytic space.   We will explain in more detail in the next paragraph.   By virtue of these functions in \eqref{laga}  we can apply the idea in the previous subsection to derive the desired estimate for $\mathcal U$;  this is essentially different from the treatment presented  in Dietert and G\'erard-Varet's work  \cite{DG}.    

Next  we will  explain the main difficulties and the new ideas   introduced in this
paper.  We first estimate $u$.  
Applying  $\partial_x$  to the first equation in \eqref{prandtl} yields
\begin{equation}\label{zu+}
	\big(\partial_t + u\partial_x    +v\partial_y	 +w\partial_z   -\partial _{z}^2\big)\partial_xu=-(\partial_x w)\partial_z u-(\partial_x u)\partial_xu-(\partial_x v)\partial_yu.
\end{equation} 
Note we  lose one order tangential derivatives in  $\partial_x w$ which is the main difficulty for the existence theory of Prandtl equation.  To overcome the loss of derivatives  we introduce  a new  cancellation in the system.     Multiplying the equation \eqref{mau} by  $\partial_z u $    and then subtracting the resulting equation by \eqref{zu+};  this eliminates the  term   $(\partial_x w)\partial_zu$ that loses derivatives  and yields 
\begin{multline}\label{edif}
	  \big(\partial_t + u\partial_x    +v\partial_y	 +w\partial_z   -\partial _{z}^2\big) \overbrace{\big[\partial_x  u-(\partial_z  u)\int_0^z  \mathcal U d\tilde z\big]}^{=\lambda}\\ 
 	 =-(\partial_x u)\partial_xu- (\partial_xv)\partial_yu - \big[(\partial_y v)\partial_zu- ( \partial_yu)\partial_zv\big]  \int_0^z\mathcal U d\tilde z+2(\partial_z^2 u)\mathcal U.
\end{multline}
Note the above equation for $\lambda$ doesn't lose derivatives if considering  $\lambda$ has the same order as that of $\partial_x u$ and $\mathcal U$.   Thus we can derive the estimate for $\lambda$  from  the equation \eqref{edif} above  and  as a result the estimate on $\partial_x u$ will follow provided  we can  control $(\partial_z  u)\int_0^z  \mathcal U d\tilde z.$

To control  $(\partial_z  u)\int_0^z  \mathcal U d\tilde z$  we can not perform the energy estimate  from its equation   \eqref{mau}  since we lose one order derivatives  caused by the source term $\partial_xw.$   Instead we will control $(\partial_z  u)\int_0^z  \mathcal U d\tilde z$   in terms of $\mathcal U$ which solves the following equation,    applying  $\partial_z$ to \eqref{mau},
\begin{multline}\label{ueqla}
\big(\partial_t+u\partial_x+v\partial_y+w\partial_z-\partial_z^2\big) \mathcal U  \\
=\underbrace{\partial_x^2 u-\partial_x\Big[(\partial_z u)\int_0^z\mathcal U  d\tilde z\Big]}_{= \partial_x\lambda}+\underbrace{\partial_x\partial_yv-\partial_y\Big[(\partial_z v)\int_0^z \mathcal U  d\tilde z\Big]}_{=\partial_y\delta}+\textrm{l.o.t.}, 	
\end{multline}
recalling $\lambda$ and $\delta$ are given by \eqref{laga}.   Furthermore  it follows from \eqref{edif}  that
\begin{eqnarray*} 
\begin{aligned}
&	\big(\partial_t+u\partial_x+v\partial_y+w\partial_z-\partial_z^2 \big)  \partial_x \lambda\\
	& \quad= - \Big[(\partial_x  u)\partial_x \lambda+ (\partial_x v) \partial_y\lambda+ (\partial_x w) \partial_z\lambda\Big]\\
	&\qquad\ \ - \partial_x \Big[(\partial_xu)\partial_xu+(\partial_xv)\partial_yu + \big[(\partial_y v)\partial_zu- ( \partial_yu)\partial_zv\big]  \int_0^z\mathcal U d\tilde z-2(\partial_z^2 u)\mathcal U\Big].
	\end{aligned}
\end{eqnarray*}
Similarly for $\partial_y \delta$.  Then combining the two equations above gives
\begin{multline}\label{macu}
\big(\partial_t+u\partial_x+v\partial_y+w\partial_z-\partial_z^2\big)^2 \mathcal U  \\
=\textrm{terms involving the second order derivatives}+\textrm{l.o.t.}, 
\end{multline}
Then the situation is similar to  that for the model equation \eqref{ef} or \eqref{eqf},  with $\varphi$ and $\xi$ therein corresponding  to  $\mathcal U$ and $\partial_x \lambda+\partial_y\delta$ respectively. 
    Inspired  by  the treatment of  the model equation \eqref{ef} or \eqref{eqf} and  the definition in \eqref{aunor},   it is natural to consider the uniform upper bound  with respect to $\alpha$  of the following   norm:
\begin{eqnarray*}
	 \frac{\rho^{\abs\alpha-6}}{ [( \abs\alpha-6)!]^{\sigma}}  \big\| \partial^{\alpha} \mathcal U \big\|_{L^2}  	 + \frac{ \rho^{(\abs\alpha+1)-6}}{ [\inner{\abs\alpha+1-6}!]^{\sigma}}  (\abs\alpha+1) \Big( \norm{ \partial^\alpha\partial_x\lambda}_{L^2}+ \norm{\partial^\alpha\partial_y\delta}_{L^2} \Big)
\end{eqnarray*}  
or its equivalence 
 \begin{eqnarray*}
	 \frac{\rho^{\abs\alpha-6}}{ [( \abs\alpha-6)!]^{\sigma}}  \big\| \partial^{\alpha} \mathcal U \big\|_{L^2}  	 + \frac{ \rho^{\abs\alpha-6}}{ [\inner{\abs\alpha-6}!]^{\sigma}}  \abs\alpha \Big( \norm{ \partial^\alpha\lambda}_{L^2}+ \norm{\partial^\alpha \delta}_{L^2} \Big) 
	 \end{eqnarray*}     
with $\sigma\leq 2,$ recalling $\mathcal U$ has the same order as that of $\partial_x u.$   Precisely, we will define $\abs{\cdot}_{\rho,\sigma}$ as below,   similar to the definition in \eqref{aunor}. 
 We use the notation
 \begin{eqnarray*}
  \vec a=(u, v,\mathcal U, \widetilde{\mathcal U}, \lambda, \tilde \lambda, \delta, \tilde\delta) 	
 \end{eqnarray*}
  Recall $\mathcal U, \widetilde{\mathcal U}$ are  given by \eqref{mau} and \eqref{mav}, and $  \lambda, \tilde \lambda, \delta, \tilde\delta$ are defined by \eqref{laga}. 
  
    \begin{definition}  
\label{gevspace}	
Let $\norm{(u,v)}_{\rho,\sigma}$ be given in Definition \ref{defgev}. With the notation $\vec a$ above,  we define $ \abs{\vec a}_{\rho,\sigma}$ by setting 
\begin{eqnarray*}
\begin{aligned}
	  \abs{\vec a}_{\rho,\sigma}  =& \norm{(u,v)}_{\rho,\sigma}\\
	  &+\sup_{\abs\alpha\geq 6} \frac{\rho^{\abs\alpha-6}}{ [( \abs\alpha-6)!]^{\sigma}} \Big(\big\| \partial^{\alpha} \mathcal U \big\|_{L^2}+\big\|\partial^\alpha\widetilde{ \mathcal U}\big\|_{L^2}  \Big)\\
	  &+\sup_{\abs\alpha\leq 5}   \Big(\big\| \partial^\alpha \mathcal U \big\|_{L^2}+\big\|\partial^\alpha \widetilde{ \mathcal U}\big\|_{L^2}  \Big) \\
	  &+\sup_{\abs\alpha\geq 6}\frac{ \rho^{\abs\alpha-6}}{ [\inner{\abs\alpha-6}!]^{\sigma}}  \abs\alpha \Big( \norm{ \partial^\alpha\lambda}_{L^2}+ \norm{\partial^\alpha\delta}_{L^2}+\norm{ \partial^\alpha\tilde\lambda}_{L^2}+ \norm{\partial^\alpha\tilde\delta}_{L^2}\Big)
\\
&
	+\sup_{\abs\alpha\leq 5} \abs\alpha \Big( \norm{ \partial^\alpha\lambda}_{L^2}+ \norm{\partial^\alpha\delta}_{L^2}+\norm{ \partial^\alpha\tilde\lambda}_{L^2}+ \norm{\partial^\alpha\tilde\delta}_{L^2}\Big).
\end{aligned}
\end{eqnarray*}
Note there is an additional factor  $\abs\alpha$ before the $L^2$-norms of $\partial^\alpha \lambda,  \partial^\alpha\delta$ and $ \partial^\alpha\tilde\lambda, \partial^\alpha\tilde\delta$.
 \end{definition}
 
 \begin{remark}
 The auxilliary functions $\mathcal U, \lambda,\delta$ are introduced for treating the derivatives $\partial_x^m$ and meanwhile $\widetilde{ \mathcal U}, \tilde\lambda, \tilde\delta$   are for  $\partial_y^m$. Then the estimate for the general $\partial^\alpha=\partial_x^{\alpha_1}\partial_y^{\alpha_2}$ will follow as well using  the  relationship
\begin{equation}\label{realp}
 	\forall~\alpha\in\mathbb Z_+^2,~\forall~ F\in H^\infty,\quad \norm{\partial^\alpha F}_{L^2}\leq \norm{\partial_x^{\abs\alpha}F}_{L^2 } + \norm{\partial_y^{\abs\alpha} F}_{L^2}.
 \end{equation}
 In this paper we will focus on  performing only  the estimates for  $\partial_x^m$, since the estimates for $\partial_y^m$ can be treated symmetrically in the same way.
 \end{remark}

Now we are ready to state the main  a priori estimate.    We will present in detail the proof of Theorem \ref{maithm1} for $\sigma\in [3/2,2]$.  Note that the constraint  $\sigma\geq 3/2$ is not essential and indeed it is just a technical assumption  for clear presentation. 
 We refer to \cite[Section 8]{LY3D}  for the explanation  how to modify the proof for the case when $1<\sigma<3/2$.  We make the following  low regularity assumption 
 that will be checked in the last section of the paper: 
 
\begin{assumption}\label{assmain}
	Let  $X_{\rho,\sigma}$ be the Gevrey function space  equipped with the norm $\norm{\cdot}_{\rho,\sigma}$ given in  Definition \ref{defgev}.    Suppose   $(u,v)\in L^\infty\inner{[0, T];~X_{\rho_0,\sigma}}$ for some  $0<\rho_0\leq 1$ and $\sigma\in[3/2,  2]$  is a solution to  the Prandtl system  \eqref{prandtl} with initial datum $(u_0,v_0)\in X_{2\rho_0,\sigma}$.   Without loss of generality we may assume $T\leq 1$.  Moreover  we  suppose  that there exists a constant $C_*$     such that for any $ t\in[0,T],$ 
   \begin{equation}\label{condi1}
   \sup_{\stackrel{0\leq j\leq 5}{\abs\alpha+j\leq 10}}  \Big(\big\|\comi z^{\ell+j} \partial^\alpha \partial_z^j   u(t)\big\|_{L^2}+\big\|\comi z^{\ell+j} \partial^\alpha \partial_z^j  v(t)\big\|_{L^2}\Big) \leq  C_*,
\end{equation}
where the constant $C_*\geq 1$  depends only on $\norm{(u_0,v_0)}_{2\rho_0, \sigma}$,  the Sobolev embedding  constants and the numbers $\rho_0, \sigma, \ell$ that are given in Definition \ref{defgev}. 
\end{assumption}

\begin{theorem}[A priori estimate in Gevrey space]\label{apriori}
Under Assumption \ref{assmain} above, we can find   
     two constant   $C_1, C_2\geq 1,$     such that    the estimate 
 \begin{multline*} 
 	\abs{\vec a (t)}_{\rho,\sigma}^2\leq C_1 \norm{(u_0, v_0)}_{2\rho_0, \sigma}^2  
 	+ e^{C_2C_*^2} \int_{0}^{t} \inner{\abs{\vec a(s)}_{\rho,\sigma}^2+\abs{\vec a(s)}_{\rho,\sigma}^4} \,ds \\
 	+   e^{C_2C_*^2}\int_{0}^{t}\frac{ \abs{\vec a(s)}_{\tilde\rho,\sigma}^2}{\tilde \rho-\rho}\,ds
\end{multline*}	 
 holds for any pair $(\rho,\tilde\rho)$ with   $0<\rho<\tilde \rho<\rho_0$ and any $t\in[0,T],$   where the constant $C_1$ can be computed explicitly and  the constant $C_2$ depends only on the Sobolev embedding  constants and the numbers $\rho_0, \sigma, \ell$  given in Definition \ref{defgev}. Both   $C_1$ and $C_2$  are independent of the constant $C_*$ given in \eqref{condi1}. 
 \end{theorem}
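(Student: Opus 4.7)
The plan is to carry out the toy-model energy method of Subsection~2.1 simultaneously for all components of the extended vector $\vec a=(u,v,\mathcal U,\widetilde{\mathcal U},\lambda,\tilde\lambda,\delta,\tilde\delta)$. By the symmetry between $x$ and $y$ together with the inequality \eqref{realp}, it suffices to estimate the pieces involving only $\partial_x^m$; the pieces involving $\widetilde{\mathcal U},\tilde\lambda,\tilde\delta$ follow identically. All low-multi-index contributions in Definition~\ref{gevspace} are absorbed into $C_*$ via Assumption~\ref{assmain}, leaving three coupled high-derivative blocks to control: the Gevrey norms of $\partial^\alpha\mathcal U$, of $|\alpha|\partial^\alpha\lambda$ and $|\alpha|\partial^\alpha\delta$, and of the weighted norms of $\partial^\alpha\partial_z^j(u,v)$ for $|\alpha|+j\geq 7$.

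The $\lambda,\delta$ block is the most straightforward: apply $\partial^\alpha$ with $|\alpha|\geq 6$ to \eqref{edif}, pair with $\comi{z}^{2\ell}\partial^\alpha\lambda$ in $L^2$, and integrate by parts. The transport--diffusion operator produces a time derivative of $\|\comi{z}^\ell\partial^\alpha\lambda\|_{L^2}^2$, a positive dissipation, and harmless weight corrections. Crucially, the right-hand side of \eqref{edif} carries no derivative loss thanks to the cancellation against \eqref{mau}; every term is of order $\partial^\alpha$ in $\vec a$. The commutators $[\partial^\alpha,u\partial_x+v\partial_y+w\partial_z]\lambda$ are expanded by Leibniz and split into a low--high case (one factor bounded by $C_*$ through \eqref{condi1}, the other by $|\vec a|_{\rho,\sigma}$) and a symmetric high--low case. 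Multiplying by the weight $|\alpha|^2\rho^{2(|\alpha|-6)}/[(|\alpha|-6)!]^{2\sigma}$ and taking the supremum in $\alpha$ produces the $e^{C_2C_*^2}\int_0^t(|\vec a|_{\rho,\sigma}^2+|\vec a|_{\rho,\sigma}^4)\,ds$ contribution. The estimates for $\delta,\tilde\lambda,\tilde\delta$ are identical.

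The $\mathcal U$ block is where the toy-model trick enters. Applying $\partial^\alpha$ with $|\alpha|\geq 6$ to \eqref{ueqla} and pairing with $\comi{z}^{2\ell}\partial^\alpha\mathcal U$, the source carries $\partial^{\alpha+e_1}\lambda+\partial^{\alpha+e_2}\delta$, one extra tangential derivative beyond $\partial^\alpha\mathcal U$. The Gevrey weight of $\mathcal U$ at multi-index $|\alpha|$ is $\rho^{|\alpha|-6}/[(|\alpha|-6)!]^\sigma$, while the weight of $\lambda$ at multi-index $|\alpha|+1$ inside $|\vec a|_{\tilde\rho,\sigma}$ is $(|\alpha|+1)\tilde\rho^{|\alpha|-5}/[(|\alpha|-5)!]^\sigma$; their ratio is comparable to
\begin{equation*}
\frac{(|\alpha|-5)^\sigma}{(|\alpha|+1)\,\tilde\rho}\Big(\frac{\rho}{\tilde\rho}\Big)^{|\alpha|-6},
\end{equation*}
which for $\sigma\leq 2$ and $\rho<\tilde\rho\leq 1$ is bounded by $C(\tilde\rho-\rho)^{-1}$ via the elementary estimate \eqref{factor}. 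The additional factor $|\alpha|$ attached to $\lambda,\delta$ in Definition~\ref{gevspace} precisely compensates the half-derivative loss, yielding the $e^{C_2C_*^2}\int_0^t|\vec a|_{\tilde\rho,\sigma}^2/(\tilde\rho-\rho)\,ds$ term. This is the mechanism that forces the Gevrey-$2$ threshold.

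Finally, the $(u,v)$ piece of $|\vec a|_{\rho,\sigma}$ for $|\alpha|+j\geq 7$ is reconstructed from the decomposition $\partial_x u=\lambda+(\partial_z u)\int_0^z\mathcal U\,d\tilde z$ read off \eqref{laga}: tangential derivatives are controlled by the $\lambda,\delta,\mathcal U$ estimates already obtained, while normal derivatives $\partial_z^j$ are handled by repeatedly applying $\partial_z$ to \eqref{prandtl} and using the compatibility conditions \eqref{comcon} to control the boundary traces through $\partial_z^4u|_{z=0}$. The main obstacle will be the combinatorial bookkeeping of Leibniz commutators: when expanding $[\partial^\alpha,w\partial_z]$ with $w=-\int_0^z(\partial_x u+\partial_y v)\,d\tilde z$, the mid-range terms where $|\beta|\sim|\alpha|/2$ must be shown to fit in the $(\tilde\rho-\rho)^{-1}$ budget, which requires delicate factorial inequalities that are valid precisely because $\sigma\leq 2$; the $L^2_{x,y}L^\infty_z$-type controls on $\partial^\beta w$ use $\ell>1/2$ to trade $\comi{z}^\ell$-weights against the $\int_0^z$ integration. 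Summation in $\alpha$, absorption of the dissipation, and a Gr\"onwall-type assembly then produce the claimed estimate.
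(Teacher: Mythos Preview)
Your overall architecture matches the paper's, and the $\mathcal U$ block is handled exactly as you describe. However, your claim that the $\lambda,\delta$ block is ``the most straightforward'' and that ``the right-hand side of \eqref{edif} carries no derivative loss'' hides the hardest step of the whole proof. The term $2(\partial_z^2 u)\mathcal U$ on the right of \eqref{edif} is the obstruction: after applying $\partial_x^m$ and pairing with $m^2\partial_x^m\lambda$, the piece where nearly all derivatives fall on $u$ gives (after one integration by parts in $z$ to trade $\partial_z^2 u$ for $\partial_z u$ against the dissipation) a contribution of size
\[
m^2\int_0^t \bigl\|\mathcal U\,\partial_x^m\partial_z u\bigr\|_{L^2}^2\,ds.
\]
The naive bound $\|\mathcal U\|_{L^\infty}^2\|\comi z^{\ell+1}\partial_x^m\partial_z u\|_{L^2}^2$ only yields $[(m-6)!]^{2\sigma}\rho^{-2(m-6)}|\vec a|_{\rho,\sigma}^2$, so the prefactor $m^2$ coming from the extra $|\alpha|$ weight on $\lambda$ remains unabsorbed and the estimate does not close. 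The paper's resolution is nontrivial: one runs a \emph{separate} energy estimate on $\mathcal U\psi_m$ with $\psi_m=\partial_x^m u-(\partial_z u)\int_0^z\partial_x^{m-1}\mathcal U\,d\tilde z$, so that the time-integrated dissipation $\int_0^t\|\partial_z(\mathcal U\psi_m)\|_{L^2}^2\,ds$ inherits the better weight $[(m-7)!]^{2\sigma}\rho^{-2(m-7)}$ of $\psi_m$ itself, which exactly cancels the stray $m^2$. This is the claim \eqref{clam} and its proof via \eqref{mp}; without it the $\lambda$ estimate fails.

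Two smaller points. First, Assumption~\ref{assmain} bounds only low derivatives of $(u,v)$, not of $\mathcal U$ or $\lambda$; the paper needs an auxiliary energy lemma (Lemma~\ref{lemauxi}) to show $\sum_{|\beta|+j\leq 8}\|\partial^\beta\partial_z^j\mathcal U\|_{L^2}\leq e^{CC_*^2}$, and this is where the exponential in $C_*^2$ actually originates. Second, you should not pair with $\comi z^{2\ell}\partial^\alpha\lambda$: the norm on $\lambda$ in Definition~\ref{gevspace} carries no $\comi z$ weight, since $\lambda$ contains $\int_0^z\mathcal U\,d\tilde z$ which does not decay at infinity. Relatedly, the tangential $(u,v)$ estimate is not read off directly from $\partial_x u=\lambda+(\partial_z u)\int_0^z\mathcal U$; rather, one performs a weighted energy estimate on $\psi_m$ itself (this is Lemma~\ref{lemtan}), and then recovers $\partial_x^m u$ from $\psi_m$ and the already-established bound on $\partial_x^{m-1}\mathcal U$.
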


\section{Estimate on   $\partial^\alpha\mathcal U $ and $\partial^\alpha\widetilde{ \mathcal U} $}
  \label{sec5}
  
  To prove the a  priori estimate  stated in Theorem \ref{apriori},  we will proceed through this section and Sections \ref{secofuv}-\ref{seclamdelta} to derive the upper bound for the terms involved in  Definition \ref{gevspace}  of $\abs{\vec{a}}_{\rho,\sigma}$. 
For the argument presented in  Sections \ref{sec5}-\ref{seclamdelta} we always suppose Assumption \ref{assmain} is fulfilled by  $(u,v)\in L^\infty\inner{[0, T];~X_{\rho_0,\sigma}}$.

To simplify the notation,  we use from now on    the two capital letters $C_1,  C$ to denote some generic  constant that may vary from line to line,  both
depending only  on  the Sobolev embedding  constants and the numbers $\rho_0, \sigma, \ell$   given in Definition \ref{defgev}  but   {\it  independent of} the constant $C_*$ in \eqref{condi1} and the order of derivatives denoted by $m$.  

In this part we will derive the upper bound for the terms involving  $\mathcal U$ and $ \widetilde {\mathcal U}$  in Definition \ref{gevspace} of  $\abs{\vec a}_{\rho,\sigma}$.   Recall   $\mathcal U$ and $  \widetilde {\mathcal U}$   solve respectively the equations  \eqref{mau} \and \eqref{mav}. 
 	
		\begin{proposition}\label{prolambda} Under Assumption \ref{assmain} we have, 
 	  for any  $t\in[0,T]$ and   for  any pair $\inner{\rho,\tilde\rho}$ with  $0<\rho<\tilde\rho< \rho_0\leq 1$,  
 	  \begin{multline*}
		 	\sup_{\abs\alpha\geq 6}\frac{\rho^{2(\abs\alpha-6)}}{   [\inner{\abs\alpha-6}!]^{2\sigma}}   \norm{\partial^{\alpha} \mathcal U(t)}_{L^2}^2 +\sup_{\abs\alpha\leq 5} \norm{\partial^{\alpha} \mathcal U(t)}_{L^2}^2\\ \leq  CC_*\bigg(        \int_0^{t}   \big( \abs{\vec a(s)}_{ \rho,\sigma}^2+\abs{\vec a(s)}_{ \rho,\sigma}^4\big)   \,ds+ \int_0^{t}  \frac{  \abs{\vec a(s)}_{ \tilde\rho,\sigma}^2}{\tilde\rho-\rho}\,ds\bigg),
\end{multline*}
where $C_*\geq 1$ is the constant given in \eqref{condi1}.  
Symmetrically, the same upper bound also holds with $ \mathcal U$ replaced by $ \widetilde{ \mathcal U}$.   
\end{proposition}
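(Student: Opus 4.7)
The plan is to run a standard $L^2$ energy estimate on the parabolic equation \eqref{ueqla} for $\mathcal U$ (obtained by differentiating \eqref{mau} in $z$), localize it at each multi-index $\alpha$ by applying $\partial^\alpha$, multiply by the Gevrey weight, and take the supremum. Testing $\partial^\alpha\mathcal U$ against the equation, using the Neumann condition $\partial_z\mathcal U|_{z=0}=0$ built into \eqref{mau} together with incompressibility $\partial_xu+\partial_yv+\partial_zw=0$, the transport contribution vanishes on integration and one is left with
\[
\tfrac12\tfrac{d}{dt}\norm{\partial^\alpha\mathcal U}_{L^2}^{2}+\norm{\partial_z\partial^\alpha\mathcal U}_{L^2}^{2}=\bigl(\partial^\alpha\partial_x\lambda+\partial^\alpha\partial_y\delta,\partial^\alpha\mathcal U\bigr)_{L^2}+R_\alpha,
\]
where $R_\alpha$ collects the commutators $[\partial^\alpha,u\partial_x+v\partial_y+w\partial_z]\mathcal U$ and the l.o.t.\ source terms indicated in \eqref{ueqla}.

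\textbf{The decisive step: the $\lambda,\delta$ source.} For $|\alpha|\geq 6$, the crucial observation — exactly the point where the definition of $\abs{\vec a}_{\tilde\rho,\sigma}$ was tuned — is that the extra factor $|\alpha|$ multiplying $\norm{\partial^\alpha\lambda}_{L^2}$ and $\norm{\partial^\alpha\delta}_{L^2}$ in Definition \ref{gevspace} gives
\[
\norm{\partial^\alpha\partial_x\lambda}_{L^2}\norm{\partial^\alpha\mathcal U}_{L^2}\leq \frac{[(|\alpha|-5)!]^\sigma\,[(|\alpha|-6)!]^\sigma}{\tilde\rho^{\,2(|\alpha|-6)+1}(|\alpha|+1)}\,\abs{\vec a(s)}_{\tilde\rho,\sigma}^{2}.
\]
Multiplying by the Gevrey weight $\rho^{2(|\alpha|-6)}/[(|\alpha|-6)!]^{2\sigma}$ produces the combinatorial prefactor $\frac{(|\alpha|-5)^\sigma}{|\alpha|+1}\cdot\frac{1}{\tilde\rho}\bigl(\frac{\rho}{\tilde\rho}\bigr)^{2(|\alpha|-6)}$; since $\sigma\leq 2$ this is $\lesssim |\alpha|\bigl(\frac{\rho}{\tilde\rho}\bigr)^{|\alpha|-6}/\tilde\rho$, which by \eqref{factor} is controlled by $1/(\tilde\rho-\rho)$. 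This is precisely where the Gevrey-$2$ threshold is consumed; integrating in time produces the term $\int_0^t\abs{\vec a(s)}_{\tilde\rho,\sigma}^{2}/(\tilde\rho-\rho)\,ds$ stated in the proposition. The $\partial^\alpha\partial_y\delta$ source is handled symmetrically.

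\textbf{Commutators and l.o.t.} The commutator $[\partial^\alpha,u\partial_x]\mathcal U=\sum_{0<\beta\leq\alpha}\binom{\alpha}{\beta}\partial^\beta u\,\partial^{\alpha-\beta}\partial_x\mathcal U$, together with its $v\partial_y$ and $w\partial_z$ analogues, is treated by the classical high--low split. For $|\beta|\leq |\alpha|/2$ the low-derivative factor $\partial^\beta u$ (resp.\ $\partial^\beta w$, which brings in a $z$-integration absorbed by the weight $\comi z^{\ell+j}$ from Definition \ref{defgev}) is placed in $L^\infty_{x,y}(L^\infty_z)$ via \eqref{condi1} and Sobolev embedding, producing contributions $\lesssim C_*\abs{\vec a}_{\rho,\sigma}^{2}$. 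For $|\beta|> |\alpha|/2$ the roles swap: the high-derivative factor on $u,v$ enters through $\norm{(u,v)}_{\tilde\rho,\sigma}\leq\abs{\vec a}_{\tilde\rho,\sigma}$, the low-derivative factor on $\mathcal U$ is bounded by $C_*$, the Gevrey binomial coefficient $\binom{\alpha}{\beta}[(|\beta|-6)!]^\sigma[(|\alpha-\beta|)!]^\sigma/[(|\alpha|-6)!]^\sigma$ is summable, and a residual $(\rho/\tilde\rho)^{|\alpha|-6}$ again feeds into \eqref{factor}. The l.o.t.\ algebraic source terms of \eqref{ueqla}, being products of up to two first- and second-order derivatives of $(u,v)$, give schematic contributions $C_*(\abs{\vec a}_{\rho,\sigma}^{2}+\abs{\vec a}_{\rho,\sigma}^{4})$.

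\textbf{Closing and the main obstacle.} The indices $|\alpha|\leq 5$ are handled with no Gevrey weight, every factor being controlled directly by $C_*$ via \eqref{condi1}. Integrating in $t$, using $\mathcal U|_{t=0}=0$ so that no initial-data contribution for $\mathcal U$ is produced, dropping the nonnegative dissipation $\norm{\partial_z\partial^\alpha\mathcal U}_{L^2}^{2}$, and taking the supremum in $\alpha$ gives the asserted bound; the argument for $\widetilde{\mathcal U}$ is identical by symmetry. The \emph{main obstacle} is the source-term calculation above: balancing the Gevrey factorials, the extra $|\alpha|$-weight on $\lambda,\delta$, and the constraint $\sigma\leq 2$ is exactly what forces Gevrey index $2$ to be the critical threshold and is the essential new ingredient made possible by the cancellation leading to \eqref{edif}. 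The commutator bookkeeping, while tedious, is routine once this key structure is exploited.
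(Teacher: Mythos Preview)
Your overall strategy is the paper's own, and the crucial estimate on the $\lambda,\delta$ source---balancing the extra $|\alpha|$-weight in Definition~\ref{gevspace} against the factorial shift and invoking \eqref{factor} at $\sigma\leq 2$---is correctly identified and matches Lemma~\ref{lemhig}. However, two places in your commutator sketch do not close as written.

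First, the dissipation $\norm{\partial_z\partial^\alpha\mathcal U}_{L^2}^{2}$ cannot be dropped. The $w\partial_z$-commutator produces terms $(\partial^\beta w)\,\partial_z\partial^{\alpha-\beta}\mathcal U$, and the norm $\abs{\vec a}_{\rho,\sigma}$ contains \emph{no} control on $\partial_z\mathcal U$: only purely tangential derivatives of $\mathcal U$ enter Definition~\ref{gevspace}. The paper resolves this by integrating by parts in $z$, throwing $\partial_z$ onto the other two factors; one of the resulting pieces carries $\partial_z\partial^\alpha\mathcal U$ and is absorbed by half of the dissipation (this is the $J_1,J_2$ split in the proof of Lemma~\ref{lemmac}). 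The same device is used for the l.o.t.\ source $\partial^\alpha\big[(\partial_x\partial_zu+\partial_y\partial_zv)\int_0^z\mathcal U\big]$ in Lemma~\ref{lemgm}. Without it these terms are simply not controlled.

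Second, the very-low end $|\beta|\leq 3$ of your high--low split is mis-routed. There the binomial $\binom{\alpha}{\beta}$ contributes a factor $\sim|\alpha|^{|\beta|}$ while the complementary $\mathcal U$-factor is still of top order; placing everything at radius $\rho$ leaves an unabsorbed $|\alpha|$ after normalization. The paper puts precisely these terms at radius $\tilde\rho$ and kills the extra $|\alpha|$ through \eqref{factor}; this---not the l.o.t.\ sources---is where the $CC_*\int_0^t\abs{\vec a(s)}_{\tilde\rho,\sigma}^{2}/(\tilde\rho-\rho)\,ds$ in Lemma~\ref{lemmac} actually comes from. Relatedly, \eqref{condi1} cannot be used for all of $|\beta|\leq|\alpha|/2$ (it only reaches total order $10$); the middle range is handled via the Gevrey norm. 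Finally, low-order $\mathcal U$ is not bounded by $C_*$---\eqref{condi1} concerns only $(u,v)$---but by $\abs{\vec a}_{\rho,\sigma}$, which is harmless (it yields cubic rather than quadratic contributions) but should be stated correctly.
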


	 We first derive the evolution equation for $\partial_x^m\mathcal U$.  Applying $\partial_z$ to \eqref{mau} yields
\begin{multline}\label{eqfou}
	 \big(\partial_t+u\partial_x+v\partial_y+w\partial_z-\partial_z^2\big) \mathcal U\\
	 =\partial_x^2u+\partial_y\partial_x v-(\partial_zu)\partial_x \int_0^z\mathcal U d\tilde z -(\partial_zv)\partial_y \int_0^z\mathcal U d\tilde z +(\partial_xu+\partial_y v)\mathcal U, 
\end{multline}	
and thus,    using  the representation of $\lambda$ and $\delta$ given in \eqref{laga},
\begin{multline*}
	 \big(\partial_t+u\partial_x+v\partial_y+w\partial_z-\partial_z^2\big) \mathcal U\\	=\partial_x\lambda+\partial_y\delta+(\partial_x\partial_zu+\partial_y\partial_zv)\int_0^z\mathcal U d\tilde z  +(\partial_xu+\partial_y v)\mathcal U, 
\end{multline*}	
	Then, applying $\partial_x^{m}$ to the above equation we get
	\begin{multline*}
	 \big(\partial_t+u\partial_x+v\partial_y+w\partial_z-\partial_z^2\big) \partial_x^{m} \mathcal U\\
	 =-\sum_{j=1}^{m}{m\choose j}\Big[(\partial_x^j u) \partial_{x}^{m-j+1}\mathcal U+(\partial_x^j v)\partial_{x}^{m-j}\partial_y\mathcal U +(\partial_x^j w)\partial_{x}^{m-j}\partial_z\mathcal U\Big] 
	\\  + \partial_x^{m}\big(\partial_x\lambda+\partial_y\delta\big)+\partial_x^{m}\Big[(\partial_x\partial_zu+\partial_y\partial_zv)\int_0^z\mathcal U d\tilde z  +(\partial_xu+\partial_y v)\mathcal U\Big]. 
\end{multline*} 
Taking the scalar product with $\partial_x^m\mathcal U$ and observing $\mathcal U|_{t=0}=\partial_z\mathcal U|_{z=0}=0$ gives
\begin{equation}\label{uma+}
\begin{aligned}
	&\frac{1}{2}\norm{\partial_x^{m} \mathcal U(t)}_{L^2}^2 +\int_0^{t} \norm{\partial_z\partial_x^{m} \mathcal U(s)}_{L^2}^2ds\\
	&  =\int_0^{t} \Big(\big(\partial_s+u\partial_x+v\partial_y+w\partial_z-\partial_z^2\big) \partial_x^{m} \mathcal U ,\ \partial_x^{m} \mathcal U \Big)_{L^2}ds\\
	& =-\int_0^{t} \Big( \sum_{j=1}^{m}{m\choose j}\Big[(\partial_x^j u) \partial_{x}^{m-j+1}\mathcal U+(\partial_x^j v)\partial_{x}^{m-j}\partial_y\mathcal U  \Big],\ \partial_x^{m} \mathcal U \Big)_{L^2}ds\\
	& ~ \quad -\int_0^{t} \Big( \sum_{j=1}^{m}{m\choose j} (\partial_x^j w)\partial_{x}^{m-j}\partial_z\mathcal U ,  \partial_x^{m} \mathcal U \Big)_{L^2}ds+\int_0^{t} \big( \partial_x^{m} (\partial_x\lambda+\partial_y\delta),  \partial_x^{m} \mathcal U\big)_{L^2}ds\\
	&\quad\quad +\int_0^{t} \Big(  \partial_x^{m}\Big[(\partial_x\partial_zu+\partial_y\partial_zv)\int_0^z\mathcal U d\tilde z  +(\partial_xu+\partial_y v)\mathcal U\Big],\ \partial_x^{m} \mathcal U\Big)_{L^2}ds.
	\end{aligned}
\end{equation}
Next we derive the upper bound for the terms on the right-hand  side through the following three lemmas. 

\begin{lemma}\label{lemhig}
	Under the same assumptions as in Proposition \ref{prolambda} we have,  for any $m\geq 6,$  any $t\in[0,T]$ and   for  any pair $\inner{\rho,\tilde\rho}$ with  $0<\rho<\tilde\rho< \rho_0\leq 1$,
	\begin{eqnarray*}
		\int_0^{t} \Big( \partial_x^{m}\big(\partial_x\lambda+\partial_y\delta\big),\ \partial_x^{m} \mathcal U \Big)_{L^2}dt \leq       C\frac{ [(m-6)!]^{ 2\sigma}  }{  \rho^{ 2(m-6)}}    \int_0^{t}  \frac{\abs{\vec a(s)}_{\tilde\rho,\sigma} ^2 }{ \tilde\rho-\rho}       ds. 
	\end{eqnarray*}
\end{lemma}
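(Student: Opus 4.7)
The lemma concerns the cross-term in \eqref{uma+} in which the source $\partial_x\lambda+\partial_y\delta$ formally costs one additional tangential derivative relative to $\mathcal U$. The plan is to exploit exactly the extra weight $\abs\alpha$ placed in front of $\|\partial^\alpha\lambda\|_{L^2}$ and $\|\partial^\alpha\delta\|_{L^2}$ in Definition \ref{gevspace}, coupled with the shrinking-radius inequality \eqref{factor}. This is the Gevrey-$2$ incarnation of the mechanism described in the toy-model discussion of Section \ref{secapri}: there, $\xi$ plays the role of a half-derivative of $\varphi$; here, the pair $(\lambda,\delta)$ carries the analogous extra half-derivative of regularity relative to $\mathcal U$.

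A direct Cauchy--Schwarz estimate (no integration by parts is needed) gives, pointwise in $s\in[0,t]$ and for $m\geq 6$,
\begin{equation*}
\bigl|\bigl(\partial_x^m(\partial_x\lambda+\partial_y\delta),\partial_x^m\mathcal U\bigr)_{L^2}\bigr|
\leq \bigl(\|\partial^{(m+1,0)}\lambda\|_{L^2}+\|\partial^{(m,1)}\delta\|_{L^2}\bigr)\|\partial_x^m\mathcal U\|_{L^2}.
\end{equation*}
Both multi-indices $(m+1,0)$ and $(m,1)$ have length $m+1\geq 7$, so Definition \ref{gevspace} yields
\begin{equation*}
\|\partial^{(m+1,0)}\lambda\|_{L^2}+\|\partial^{(m,1)}\delta\|_{L^2}\leq \frac{2}{m+1}\cdot\frac{[(m-5)!]^{\sigma}}{\tilde\rho^{m-5}}\,\abs{\vec a(s)}_{\tilde\rho,\sigma},
\end{equation*}
together with $\|\partial_x^m\mathcal U\|_{L^2}\leq\frac{[(m-6)!]^{\sigma}}{\tilde\rho^{m-6}}\abs{\vec a(s)}_{\tilde\rho,\sigma}$. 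Using the factorisation $[(m-5)!]^\sigma=(m-5)^\sigma[(m-6)!]^\sigma$ and splitting $\tilde\rho^{2m-11}=\tilde\rho\cdot\tilde\rho^{2(m-6)}$, the product reorganises as
\begin{equation*}
\frac{[(m-6)!]^{2\sigma}}{\rho^{2(m-6)}}\cdot\frac{2(m-5)^\sigma}{(m+1)\tilde\rho}\Bigl(\frac{\rho}{\tilde\rho}\Bigr)^{2(m-6)}\abs{\vec a(s)}_{\tilde\rho,\sigma}^2.
\end{equation*}

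It remains to show that the combinatorial prefactor $\frac{(m-5)^\sigma}{(m+1)\tilde\rho}(\rho/\tilde\rho)^{2(m-6)}$ is bounded by $C/(\tilde\rho-\rho)$ uniformly in $m\geq 6$. Since $\sigma\leq 2$ and $(m-5)/(m+1)\leq 1$, one has $(m-5)^\sigma/(m+1)\leq (m-5)^2/(m+1)\leq m-5$. For $m\geq 7$ the exponent satisfies $2(m-6)\geq m-5$, so using $\rho/\tilde\rho\leq 1$ we may estimate $(\rho/\tilde\rho)^{2(m-6)}\leq (\rho/\tilde\rho)^{m-5}$ and then invoke inequality \eqref{factor} with $k=m-5$ to obtain $\frac{m-5}{\tilde\rho}(\rho/\tilde\rho)^{m-5}\leq 1/(\tilde\rho-\rho)$. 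The case $m=6$ is trivial since the exponent $2(m-6)$ vanishes and $1/\tilde\rho\leq 1/(\tilde\rho-\rho)$. Integrating in $s\in[0,t]$ then gives the advertised bound.

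The main obstacle is conceptual rather than computational: one has to recognise in advance that inserting the weight $\abs\alpha$ into the Gevrey norm of $\lambda,\delta$ in Definition \ref{gevspace} is exactly what absorbs the one lost tangential derivative created by the source $\partial_x\lambda+\partial_y\delta$ in \eqref{ueqla}, and that this half-derivative gain is tight for Gevrey index $\sigma=2$. Once the norm is chosen as in Definition \ref{gevspace}, the bound reduces to the Banach-scale bookkeeping already illustrated for the toy system \eqref{eqf}.
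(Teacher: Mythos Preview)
Your proof is correct and follows essentially the same route as the paper: apply Cauchy--Schwarz, read off the bounds on $\|\partial^{\alpha}\lambda\|_{L^2}$, $\|\partial^{\alpha}\delta\|_{L^2}$, and $\|\partial_x^m\mathcal U\|_{L^2}$ from Definition~\ref{gevspace} at the larger radius $\tilde\rho$, and absorb the residual factor $m^{\sigma-1}/\tilde\rho$ via the shrinking-radius inequality~\eqref{factor}. The only difference is cosmetic bookkeeping (the paper writes the prefactor as $m^{\sigma-1}$ and applies~\eqref{factor} with $k=m-6$, while you bound it by $m-5$ and use $k=m-5$), but the mechanism is identical.
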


\begin{proof}
It follows from Definition \ref{gevspace} of $\abs{\vec a}_{r,\sigma}$ that,  for any $\alpha\in \mathbb Z_+^2$  and  for any $r>0$, 
\begin{equation}\label{elam}
\abs\alpha\Big(	\norm{\partial^\alpha\lambda}_{L^2}+\norm{ \partial^\alpha\delta}_{L^2}\Big)\leq 
	\left\{
	\begin{aligned}
	& \frac{   [\inner{\abs\alpha-6}!]^{ \sigma}}{r^{ (\abs\alpha-6)}}\abs{\vec a}_{r,\sigma},\quad {\rm if}~\abs\alpha \geq 6,\\
	& \abs{\vec a}_{r,\sigma},  \quad {\rm if}~ \abs\alpha \leq 5,
	\end{aligned}
	\right.
\end{equation}
  and that, observing $\ell>1/2,$ 
 \begin{multline}\label{uint+}
 	  \big\| \comi z^{-\ell-{1\over 2}}  \int_0^z    \partial^\alpha  \mathcal U d\tilde z\big\|_{L^2}+	\big\|  \comi z^{-1/2} \int_0^z     \partial^\alpha \mathcal U d\tilde z\big\|_{L_{x,y}^2(L_z^\infty)}\\
 	  \leq C\norm{\partial^\alpha  \mathcal U}_{L^2}\leq 
 	 \left\{
 	 \begin{aligned} 
 	& C\frac{[(\abs\alpha-6)!]^\sigma}{r^{\abs\alpha-6}}\abs{\vec a}_{r,\sigma},\quad {\rm if}~\abs\alpha \geq 6,\\
 	 &C	\abs{\vec a}_{r,\sigma}, \quad {\rm if}~\abs\alpha \leq 5.
 	 \end{aligned}
\right.
 	  \end{multline}
Using the above   estimates we compute 
\begin{multline*} 
	\int_0^{t} \Big( \partial_x^{m}\big(\partial_x\lambda+\partial_y\delta\big),\ \partial_x^{m} \mathcal U \Big)_{L^2}dt \leq     \int_0^{t} \frac{1}{m+1}\frac{[(m-5)!]^{ \sigma}  }{ \tilde\rho^{ m-5}}      \frac{[(m-6)!]^{ \sigma}  }{ \tilde\rho^{ m-6}}   \abs{\vec a(s)}_{\tilde\rho,\sigma} ^2 ds\\
	 \leq   C  \int_0^{t}  \frac{m^{\sigma-1} }{ \tilde\rho}      \frac{[(m-6)!]^{ 2\sigma}  }{ \tilde\rho^{ 2(m-6)}}   \abs{\vec a(s)}_{\tilde\rho,\sigma} ^2ds \leq \frac{C[(m-6)!]^{ 2\sigma}  }{  \rho^{ 2(m-6)}}    \int_0^{t}  \frac{\abs{\vec a(s)}_{\tilde\rho,\sigma} ^2 }{ \tilde\rho-\rho}       ds, 
\end{multline*}
the last inequality holding because $\sigma\leq 2$ and  
\begin{eqnarray*}
	\frac{m}{\tilde \rho} \frac{1}{\tilde\rho^{2(m-6)}}  =  \frac{1}{\rho^{2(m-6)}}\frac{m}{\tilde\rho}\frac{\rho^{2(m-6)}}{\tilde\rho^{2(m-6)}}\leq C \frac{1}{\rho^{2(m-6)}} \frac{m-6}{\tilde\rho}\Big(\frac{\rho}{\tilde\rho}\Big)^{m-6}    \leq  C\frac{1}{\rho^{2(m-6)}}  \frac{1}{\tilde\rho-\rho}
\end{eqnarray*}
due to \eqref{factor}. 
The proof of Lemma \ref{lemhig} is completed.   
\end{proof}

 	 \begin{lemma}\label{lemmac}
 	 	Under the same assumption as in Proposition \ref{prolambda} we have,  for any $m\geq 6,$  any $t\in[0,T]$ and   for  any pair $\inner{\rho,\tilde\rho}$ with  $0<\rho<\tilde\rho< \rho_0\leq 1$,
   \begin{eqnarray*}
   \begin{aligned}
   &	-\int_0^{t} \Big( \sum_{j=1}^{m}{m\choose j}\Big[(\partial_x^j u) \partial_{x}^{m-j+1}\mathcal U+(\partial_x^j v)\partial_{x}^{m-j}\partial_y\mathcal U  \Big],\ \partial_x^{m} \mathcal U \Big)_{L^2}ds\\
  & \qquad	-\int_0^{t} \Big( \sum_{j=1}^{m}{m\choose j} (\partial_x^j w)\partial_{x}^{m-j}\partial_z\mathcal U,\ \partial_x^{m} \mathcal U \Big)_{L^2}ds\\
   &	\leq {1\over 2} \int_0^{t} \norm{\partial_z\partial_x^{m} \mathcal U }_{L^2}^2 ds+ C \frac{[\inner{m-6}!]^{2\sigma}}{\rho^{2(m-6)}} \bigg( \int_0^{t}  \big(\abs{\vec a(s)}_{ \rho,\sigma}^3+   \abs{\vec a(s)}_{ \rho,\sigma}^4 \big)ds \bigg)\\
    &	\qquad +CC_*  \frac{[\inner{m-6}!]^{2\sigma}}{\rho^{2(m-6)}}      \int_0^{t}   \frac{\abs{\vec a(s)}_{\tilde \rho,\sigma}^2}{\tilde\rho-\rho} ds ,
   	\end{aligned}
   \end{eqnarray*}
where $C_*$ is the constant in \eqref{condi1}.
 	 \end{lemma}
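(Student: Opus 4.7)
The plan is to multiply the target inequality through by $\rho^{2(m-6)}/[(m-6)!]^{2\sigma}$ and then estimate the trilinear integrals on the right-hand side of \eqref{uma+} by splitting the sums $\sum_{j=1}^m\binom{m}{j}$ according to where the highest tangential derivative lands. The main combinatorial ingredient is the elementary identity
\begin{equation*}
\binom{m}{j}\,\frac{(j!)^\sigma\,[(m-j)!]^\sigma}{(m!)^\sigma}\le 1\qquad(\sigma\ge 1),
\end{equation*}
which, up to polynomial factors in $m$ coming from the $-6$ shifts in Definition \ref{gevspace}, allows the Gevrey weights from the two factors to recombine into $[(m-6)!]^\sigma$. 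The other key ingredient is \eqref{factor}, which converts an extra power $(\rho/\tilde\rho)^{m-j-6}$ into the denominator $1/(\tilde\rho-\rho)$ whenever the high derivative acts on a factor that is only $\tilde\rho$--controlled.

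For the $u$ and $v$ convective terms I would split the $j$--sum at $j=5$, following the piecewise definition of $\abs{\vec a}_{\rho,\sigma}$ in Definition \ref{gevspace}. When $1\le j\le 5$, Assumption \ref{assmain} bounds $\partial_x^j u$ together with its relevant $\comi z^\ell$--companion in $L^\infty_{x,y}(L^2_z)$ by $CC_*$ after Sobolev embedding in $(x,y)$; the remaining high-order factor $\partial_x^{m-j+1}\mathcal U$ (or $\partial_x^{m-j}\partial_y\mathcal U$) is estimated in $L^2$ with radius $\tilde\rho$ via \eqref{uint+}, and \eqref{factor} supplies the $1/(\tilde\rho-\rho)$ factor, giving the last term of the claim with prefactor $CC_*$. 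When $j\ge 6$, both factors lie in the Gevrey scale at radius $\rho$; the combinatorial identity above combined with \eqref{elam} and \eqref{uint+} turns the contribution into a cubic or quartic power of $\abs{\vec a(s)}_{\rho,\sigma}$, producing the middle term of the claim.

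The $w$ contribution is the main obstacle, because $w=-\int_0^z(\partial_xu+\partial_yv)\,d\tilde z$ carries one extra tangential derivative. Here I would use
\begin{equation*}
\partial_x^j w=-\int_0^z\bigl(\partial_x^{j+1}u+\partial_x^j\partial_yv\bigr)\,d\tilde z
\end{equation*}
together with the weighted bound
\begin{equation*}
\Bigl\|\comi z^{-1/2}\!\int_0^z F\,d\tilde z\Bigr\|_{L^\infty_z}+\Bigl\|\comi z^{-\ell-1/2}\!\int_0^z F\,d\tilde z\Bigr\|_{L^2_z}\le C\|\comi z^\ell F\|_{L^2_z}
\end{equation*}
in order to absorb the $z$--weights when pairing $\partial_x^j w$ with $\partial_x^{m-j}\partial_z\mathcal U$ and $\partial_x^m\mathcal U$. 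For small $j$, $\partial_x^{j+1}u+\partial_x^j\partial_yv$ is controlled in the needed weighted norm by $C_*$, and either $\partial_x^{m-j}\partial_z\mathcal U$ is extracted as the highest-order piece (again producing the $\tilde\rho-\rho$ denominator via \eqref{factor}) or, at the marginal index, Young's inequality absorbs a small fraction into the dissipation $\tfrac12\int_0^t\|\partial_z\partial_x^m\mathcal U\|_{L^2}^2\,ds$ already sitting on the left of \eqref{uma+}. For $j$ close to $m$ the roles reverse: $\partial_x^{m-j}\partial_z\mathcal U$ is bounded via Assumption \ref{assmain} and Sobolev embedding, while $\partial_x^{j+1}u$ and $\partial_x^j\partial_yv$ are bounded using the full partial Gevrey norm of $(u,v)$ from Definition \ref{defgev}. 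The hard part will be matching the index $j+1$ (rather than $j$) on $u$ to the Gevrey weight indexed by $j-6$; this costs only a polynomial factor in $m$ that is once more absorbed by \eqref{factor}, and summing the two regimes together with the absorbed dissipation reproduces exactly the three terms of the claim.
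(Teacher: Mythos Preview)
Your treatment of the $u$ and $v$ convective terms is essentially sound (the paper splits at $[m/2]$ rather than at $j=5$, but that is a matter of bookkeeping). The real problem is the $w$ term. You propose to estimate $\partial_x^{m-j}\partial_z\mathcal U$ directly, either ``as the highest-order piece'' at radius $\tilde\rho$ or by absorption into the dissipation. Neither works: the norm $\abs{\vec a}_{\rho,\sigma}$ in Definition~\ref{gevspace} contains only $\norm{\partial^\alpha\mathcal U}_{L^2}$ with \emph{no} normal derivative, so $\norm{\partial_x^{m-j}\partial_z\mathcal U}_{L^2}$ is simply not controlled by $\abs{\vec a}_{\tilde\rho,\sigma}$ for any choice of radius. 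Likewise, the only dissipation available from \eqref{uma+} is $\int_0^t\norm{\partial_z\partial_x^{m}\mathcal U}_{L^2}^2\,ds$ at the top index, so it cannot absorb $\partial_x^{m-j}\partial_z\mathcal U$ for $j\geq 1$. Your fallback for large $j$, bounding $\partial_x^{m-j}\partial_z\mathcal U$ ``via Assumption~\ref{assmain} and Sobolev embedding'', also fails because \eqref{condi1} bounds only derivatives of $u$ and $v$, not of $\mathcal U$.

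The missing idea is a single integration by parts in $z$ on the whole trilinear term: since $\mathcal U|_{z\to\infty}=0$ and $\partial_z\mathcal U|_{z=0}=0$, one moves $\partial_z$ off $\partial_x^{m-j}\mathcal U$ onto the other two factors, producing
\[
\int_0^t\sum_{j=1}^m\binom{m}{j}\Big[\big((\partial_x^j w)\partial_x^{m-j}\mathcal U,\ \partial_z\partial_x^m\mathcal U\big)_{L^2}
+\big((\partial_x^{j+1}u+\partial_x^j\partial_y v)\partial_x^{m-j}\mathcal U,\ \partial_x^m\mathcal U\big)_{L^2}\Big]\,ds,
\]
using $\partial_z w=-(\partial_x u+\partial_y v)$. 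Now every $\mathcal U$-factor appears without $\partial_z$ and is controlled by $\abs{\vec a}_{\rho,\sigma}$; the first bracket is bounded by Cauchy--Schwarz and Young, giving exactly the $\tfrac12\int_0^t\norm{\partial_z\partial_x^m\mathcal U}_{L^2}^2\,ds$ that appears in the statement, while the second bracket is a genuine trilinear term in quantities the norm does control and yields the cubic/quartic contribution. Note in particular that after this integration by parts the $w$ piece needs \emph{no} $\tilde\rho$-shift at all (see the paper's \eqref{wes}); the $CC_*/(\tilde\rho-\rho)$ term in the lemma comes entirely from the $j\leq 3$ range of the $u,v$ convective sums.
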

 	 
 	 \begin{proof}
 	 We  treat the first term on the left side and write
 	 \begin{multline}\label{feu}
	\sum_{j=1}^{m}{{m}\choose j}  \norm{ (\partial_x^j u) \partial_x^{m-j+1}\mathcal U}_{L^2}\\
	\leq 	\sum_{j=1}^{[m/2]}{{m}\choose j}  \norm{ \partial_x^j u }_{L^\infty} \norm{  \partial_x^{m-j+1}\mathcal U}_{L^2} \\
	+ \sum_{j= [m/2]+1}^m {{m}\choose j}  \norm{ \partial_x^j u}_{L_{x,y}^2(L_z^\infty)}\norm{ \partial_x^{m-j+1} \mathcal U}_{L_{x,y}^\infty(L_z^2)},
\end{multline}
where as standard, $[p] $ denotes the largest integer less than or equal to $p.$   We need 
the following Sobolev embedding  inequalities:	
\begin{equation}
	\label{soblev}
	\left\{
	\begin{aligned}
	&\norm{F}_{L^\infty(\mathbb R_{x,y}^2)}\leq  \sqrt 2\Big(\norm{F}_{L_{x,y}^2}+\norm{\partial_x F}_{{L_{x,y}^2}}+\norm{\partial_y F}_{{L_{x,y}^2}}+\norm{\partial_x \partial_y F}_{{L_{x,y}^2}}\Big),\\
	&\norm{F}_{L^\infty} \leq   2\Big(\norm{ F}_{L^2}+\norm{\partial_{x}  F}_{L^2}+\norm{\partial_{y} F}_{L^2}+\norm{  \partial_{z}F}_{L^2}\Big)\\ &\qquad \qquad+2\Big(\norm{\partial_{x} \partial_{y}F}_{L^2}+\norm{\partial_{x} \partial_{z}F}_{L^2}+\norm{\partial_{y} \partial_{z}F}_{L^2}+\norm{\partial_{x}\partial_y \partial_{z}F}_{L^2}\Big),
	\end{aligned}
	\right. 
	\end{equation}
and moreover   it follows that  the definition of $\abs{\vec a}_{r,\sigma}$ and Assumption \ref{assmain} that,  for any $\alpha\in \mathbb Z_+^2,$  any $0\leq j\leq 5$  and   any $r>0$,   
\begin{multline}\label{emix}
	\norm{\comi z^{\ell+j}\partial^\alpha\partial_z^j u}_{L^2}+\norm{\comi z^{\ell+j}\partial^\alpha\partial_z^j v}_{L^2}\\
	\leq 
	\left\{
	\begin{aligned}
	& \frac{   [\inner{\abs\alpha+j-7}!]^{ \sigma}}{r^{ (\abs\alpha+j-7)}}\abs{\vec a}_{r,\sigma},\  {\rm if}~\abs\alpha+j\geq 7,\\
	& \min\Big\{\abs{\vec a}_{r,\sigma},  \  C_*\Big\},\quad {\rm if}~ \abs\alpha+j\leq 6,
	\end{aligned}
	\right.
\end{multline}
where $C_*$ is the constant given in \eqref{condi1}.  
Consequently  we use the above estimates and  \eqref{uint+} to compute,   
\begin{multline}\label{spe}
 	\sum_{j=1}^{[m/2]}{{m}\choose j}  \norm{ \partial_x^j u }_{L^\infty} \norm{  \partial_x^{m-j+1}\mathcal U}_{L^2} \\
   \leq   C \sum_{j=4}^{[m/2]}\frac{m!} {j!(m-j)!} \frac{[(j-4)!]^\sigma}{ \rho^{j-4}} \frac{[(m-j-5)!]^\sigma}{\rho^{m-j-5}} \abs{\vec a}_{\rho,\sigma}^2 \\+ C C_*\sum_{1\leq j\leq 3} \frac{m!} {j!(m-j)!}  \frac{[(m-j-5)!]^\sigma}{\tilde\rho^{m-j-5}} \abs{\vec a}_{\tilde \rho,\sigma}. 
\end{multline}
Direct verification shows
 \begin{eqnarray*}
\sum_{1\leq j\leq 3} \frac{m!} {j!(m-j)!}  \frac{[(m-j-5)!]^\sigma}{\tilde\rho^{m-j-5}} \abs{\vec a}_{\tilde \rho,\sigma}  \leq C m  \frac{[(m-6)!]^\sigma}{ \tilde \rho^{m-6}} \abs{\vec a}_{\tilde\rho,\sigma},
 \end{eqnarray*}
and meanwhile  
\begin{eqnarray*}
\begin{aligned}
&  \sum_{j=4}^{[m/2]}\frac{m!} {j!(m-j)!} \frac{[(j-4)!]^\sigma}{ \rho^{j-4}} \frac{[(m-j-5)!]^\sigma}{\rho^{m-j-5}} \abs{\vec a}_{\rho,\sigma}^2 \\
	&\leq  C \frac{   \abs{\vec a}_{\rho,\sigma}^2}{\rho^{m-6}}  \sum_{j=4}^{[m/2]} \frac{m! [(j-4)!]^{\sigma-1} [(m-j-5)!]^{\sigma-1}} {j^4(m-j)^5} \\   
	& \leq   C\frac{   \abs{\vec a}_{\rho,\sigma}^2}{\rho^{m-6}} \sum_{j=4}^{[m/2]} \frac{(m-6)! m^6} {j^4m^5 }   [(m-9)!]^{\sigma-1}\\
	&\leq  C \frac{    [(m-6)!]^{\sigma} \abs{\vec a}_{\rho,\sigma}^2 } {\rho^{m-6}} \frac{ m } {  m^{3(\sigma-1)} } \sum_{j=4}^{[m/2]} \frac{ 1 } {j^4} 
	 \leq  C \frac{  [(m-6)!]^{\sigma} } {\rho^{m-6}}\abs{\vec a}_{\rho,\sigma}^2, 
	 \end{aligned}
\end{eqnarray*}
the  last inequality using the fact that $\sigma\in[3/2,2].$   Combining the above inequalities with \eqref{spe} gives
\begin{eqnarray*}
	\sum_{j=1}^{[m/2]}{{m}\choose j}  \norm{ \partial_x^j u }_{L^\infty} \norm{  \partial_x^{m-j+1}\mathcal U}_{L^2} \leq C \frac{  [(m-6)!]^{\sigma} } {\rho^{m-6}}\abs{\vec a}_{\rho,\sigma}^2+C C_*m  \frac{[(m-6)!]^\sigma}{ \tilde \rho^{m-6}} \abs{\vec a}_{\tilde\rho,\sigma}.
\end{eqnarray*}
  Similarly
\begin{eqnarray*}
\begin{aligned}
	&\sum_{j= [m/2]+1}^m {{m}\choose j}  \norm{ \partial_x^j u}_{L_{x,y}^2(L_z^\infty)}\norm{ \partial_x^{m-j+1} \mathcal U}_{L_{x,y}^\infty(L_z^2)}\\
	& \leq \sum_{j= [m/2]+1}^{m-3} \frac{m!} {j!(m-j)!} \frac{[(j-6)!]^\sigma}{ \rho^{j-6}} \frac{[(m-j-3)!]^\sigma}{\rho^{m-j-3}} \abs{\vec a}_{\rho,\sigma}^2\\
	&\quad+\sum_{j= m-2}^m \frac{m!} {j!(m-j)!} \frac{[(j-6)!]^\sigma}{ \rho^{j-6}}  \abs{\vec a}_{\rho,\sigma}^2\\
	&\leq C \frac{  [(m-6)!]^{\sigma} } {\rho^{m-6}}\abs{\vec a}_{\rho,\sigma}^2.
	\end{aligned}
\end{eqnarray*}
Putting these inequalities into \eqref{feu} we get 
\begin{multline}\label{uem}
	\sum_{j=1}^{m}{{m}\choose j}  \norm{ (\partial_x^j u) \partial_x^{m-j+1}\mathcal U}_{L^2}\\
	\leq C \frac{  [(m-6)!]^{\sigma} } {\rho^{m-6}}\abs{\vec a}_{\rho,\sigma}^2+CC_*  m  \frac{[(m-6)!]^\sigma}{ \tilde \rho^{m-6}} \abs{\vec a}_{\tilde\rho,\sigma}.
\end{multline}
The above estimate also holds with $(\partial_x^j u) \partial_x^{m-j+1}\mathcal U$ replaced by 
	$ (\partial_x^j v) \partial_x^{m-j}\partial_y\mathcal U$.  
 This gives,  using \eqref{uint+} and \eqref{factor},
   \begin{multline*}
   	-\int_0^{t} \Big( \sum_{j=1}^{m}{m\choose j}\Big[(\partial_x^j u) \partial_{x}^{m-j+1}\mathcal U+(\partial_x^j v)\partial_{x}^{m-j}\partial_y\mathcal U  \Big],\ \partial_x^{m} \mathcal U \Big)_{L^2}ds\\
   	\leq  C \frac{    [\inner{m-6}!]^{2\sigma}}{\rho^{2(m-6)}} \bigg( \int_0^{t}   \abs{\vec a(s)}_{ \rho,\sigma}^3  ds+  C_*    \int_0^{t}   \frac{\abs{\vec a(s)}_{\tilde \rho,\sigma}^2}{\tilde\rho-\rho} ds\bigg).
   \end{multline*}
The assertion in Lemma \ref{lemmac} will follow if we have  
     \begin{multline}\label{wes}
   	-\int_0^{t} \Big( \sum_{j=1}^{m}{m\choose j} (\partial_x^j w)\partial_{x}^{m-j}\partial_z\mathcal U,\ \partial_x^{m} \mathcal U \Big)_{L^2}ds \\
   		\leq {1\over 2} \int_0^{t} \norm{\partial_z\partial_x^{m} \mathcal U }_{L^2}^2 ds+  \frac{C   [\inner{m-6}!]^{2\sigma}}{\rho^{2(m-6)}}  \int_0^{t}  \big (\abs{\vec a(s)}_{ \rho,\sigma}^3+   \abs{\vec a(s)}_{ \rho,\sigma}^4  \big)ds.
   \end{multline}
It follows from integration by parts that
\begin{eqnarray}\label{j1j2}
		-\int_0^{t} \Big( \sum_{j=1}^{m}{m\choose j} (\partial_x^j w)\partial_{x}^{m-j}\partial_z\mathcal U,\ \partial_x^{m} \mathcal U \Big)_{L^2}ds\leq J_1+J_2,
\end{eqnarray}
with
\begin{eqnarray*}
J_1&=& \int_0^{t} \sum_{j=1}^{m}{{m}\choose j}\norm{ \big(\partial_x^j w\big)\partial_x^{m-j} \mathcal U}_{L^2}\norm{\partial_z \partial_x^m\mathcal U}_{L^2} ds\\
J_2&=& \int_0^{t} \sum_{j=1}^{m}{{m}\choose j}\norm{ \big(\partial_x^{j+1} u+\partial_x^j\partial_y v\big)\partial_x^{m-j} \mathcal U}_{L^2}\norm{ \partial_x^m\mathcal U}_{L^2} ds.
\end{eqnarray*}
Observe  $
	\norm{\partial^\alpha w}_{L_z^\infty}\leq  C(\norm{\comi z^{\ell}\partial_x\partial^\alpha   u}_{L_z^2}+\norm{ \comi z^{\ell}\partial_y\partial^\alpha  v}_{L_z^2})
$ 
for $\ell>1/2$,    then  it follows \eqref{emix} that  
 \begin{equation} \label{eofw}
 \norm{\comi z^{-\ell}\partial^\alpha w}_{L^2} +\norm{ \partial^\alpha w}_{L_{x,y}^2(L_z^\infty)}   \leq
	\left\{
	\begin{aligned}
	&C \frac{   [\inner{\abs\alpha-6}!]^{ \sigma}}{r^{ (\abs\alpha-6)}}\abs{\vec a}_{r,\sigma},\quad {\rm if}~\abs\alpha \geq 6,\\
	&C\min\Big\{\abs{\vec a}_{r,\sigma},  \  C_*\Big\}, \quad {\rm if}~  \abs\alpha \leq 5.
	\end{aligned}
	\right.
\end{equation} 
Then applying similar argument  for proving \eqref{uem},  we have       
\begin{multline*}
	\sum_{j=1}^{m}{{m}\choose j}\norm{ \big(\partial_x^j w\big)\partial_x^{m-j} \mathcal U}_{L^2}+\sum_{j=1}^{m}{{m}\choose j}\norm{ \big(\partial_x^{j+1} u+\partial_x^j\partial_y v\big)\partial_x^{m-j} \mathcal U}_{L^2}\\ \leq C\frac{    [\inner{m-6}!]^{\sigma}}{\rho^{m-6}} \abs{\vec a}_{\rho,\sigma}^2,
\end{multline*} 
and thus, using the above inequality and \eqref{uint+},
\begin{eqnarray*} 
	J_1 +J_2 \leq  \frac{1}{2}\int_0^{t}\norm{ \partial_z \partial_x^m\mathcal U}_{L^2}^2ds+  C\frac{  [\inner{m-6}!]^{2\sigma}}{\rho^{2(m-6)}}\int_0^{t}  \inner{\abs{\vec a(s)}_{ \rho,\sigma}^3+ \abs{\vec a(s)}_{ \rho,\sigma}^4} ds.
\end{eqnarray*}
This with \eqref{j1j2} yields \eqref{wes}.  
The proof of Lemma \ref{lemmac} is completed.  
\end{proof}

\begin{lemma}\label{lemgm}
Under the same assumption as in Proposition  \ref{prolambda} we have,   for any $m\geq 6,$  any $t\in[0,T]$ and   for  any pair $\inner{\rho,\tilde\rho}$ with  $0<\rho<\tilde\rho< \rho_0\leq 1$, 
	\begin{multline*} 
		\int_0^{t} \Big(\partial_x^{m}\Big[(\partial_x\partial_zu+\partial_y\partial_zv)\int_0^z\mathcal U d\tilde z  +(\partial_xu+\partial_y v)\mathcal U\Big], \partial_x^{m} \mathcal U\Big)_{L^2} ds\\
		\leq      \frac{1}{2}\int_0^{t}\norm{ \partial_z \partial_x^m\mathcal U}_{L^2}^2ds+  C\frac{  [\inner{m-6}!]^{2\sigma}}{\rho^{2(m-6)}}\int_0^{t} \big(  \abs{\vec a(s)}_{ \rho,\sigma}^3+   \abs{\vec a(s)}_{ \rho,\sigma}^4\big)  ds.
	\end{multline*}
	 
\end{lemma}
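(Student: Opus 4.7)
The heart of the argument is a simple algebraic identity: by the Leibniz rule,
\begin{equation*}
(\partial_x\partial_z u+\partial_y\partial_z v)\int_0^z\mathcal U\,d\tilde z+(\partial_x u+\partial_y v)\mathcal U = \partial_z\biggl[(\partial_x u+\partial_y v)\int_0^z\mathcal U\,d\tilde z\biggr].
\end{equation*}
This rearrangement recasts the two source terms as a single total $z$-derivative, which is what prevents any loss of tangential regularity here (so that, unlike in Lemmas \ref{lemhig} and \ref{lemmac}, no factor $1/(\tilde\rho-\rho)$ is needed on the right-hand side).

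Setting $G:=(\partial_x u+\partial_y v)\int_0^z\mathcal U\,d\tilde z$, I then integrate by parts in $z$ against $\partial_x^m\mathcal U$. The boundary contributions vanish: at $z=0$ since $G|_{z=0}=0$, and at $z=+\infty$ thanks to the weighted-in-$z$ decay built into $X_{\rho,\sigma}$ (all the weighted norms $\|\comi z^{\ell+j}\partial^\alpha\partial_z^j u\|_{L^2}$ and the analogous ones for $\mathcal U$ force the relevant traces to disappear at infinity). This gives
\begin{equation*}
\int_0^t \bigl(\partial_x^m\partial_z G,\ \partial_x^m\mathcal U\bigr)_{L^2}\,ds =-\int_0^t\bigl(\partial_x^m G,\ \partial_z\partial_x^m\mathcal U\bigr)_{L^2}\,ds,
\end{equation*}
and a Cauchy--Schwarz--Young splitting produces precisely the $\tfrac12\int_0^t\|\partial_z\partial_x^m\mathcal U\|_{L^2}^2\,ds$ required by the statement, leaving only $\tfrac12\int_0^t\|\partial_x^m G\|_{L^2}^2\,ds$ to be controlled.

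The bound on $\|\partial_x^m G\|_{L^2}$ is then carried out in the spirit of Lemma \ref{lemmac}. I apply Leibniz,
\begin{equation*}
\partial_x^m G=\sum_{j=0}^m\binom{m}{j}\bigl(\partial_x^{j+1}u+\partial_x^j\partial_y v\bigr)\int_0^z \partial_x^{m-j}\mathcal U\,d\tilde z,
\end{equation*}
and split the sum at $j=[m/2]$. Each summand is estimated via the mixed-norm inequality $\|fg\|_{L^2}\leq \|\comi z^{\ell+1/2}f\|_{L^\infty}\|\comi z^{-\ell-1/2}g\|_{L^2}$ (and its $L^2_{xy}(L^\infty_z)\cdot L^\infty_{xy}(L^2_z)$ analogue for the high-index regime), combined with the Sobolev embeddings \eqref{soblev} to control the $L^\infty$-type factors, the weighted Gevrey bounds \eqref{emix} on $u,v$, and the bound \eqref{uint+} on $\int_0^z\partial_x^{m-j}\mathcal U\,d\tilde z$. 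Placing $\partial_x^{j+1}u,\partial_x^j\partial_y v$ in weighted $L^\infty$ for $j\leq[m/2]$ and in $L^2_{xy}(L^\infty_z)$ for $j>[m/2]$ gives clean pointwise-in-$s$ control of $\|\partial_x^m G\|_{L^2}$ in terms of $\abs{\vec a}_{\rho,\sigma}$.

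The main obstacle is combinatorial: each summand produces a prefactor of the form $\binom{m}{j}[(j-a)!]^\sigma[(m-j-b)!]^\sigma\rho^{-(m-a-b)}$, and these must telescope into $C[(m-6)!]^\sigma\rho^{-(m-6)}$. This is precisely the manipulation already executed in Lemma \ref{lemmac}, where the hypothesis $\sigma\geq 3/2$ is used both to tame the central binomial terms by the factor $[(j-a)!(m-j-b)!]^{\sigma-1}$ and to guarantee the convergence of an auxiliary series of the form $\sum_{j\geq 4}j^{-4}$. Squaring the resulting pointwise estimate and integrating in time produces $\tfrac12\int_0^t\|\partial_x^m G\|_{L^2}^2\,ds\leq C[(m-6)!]^{2\sigma}\rho^{-2(m-6)}\int_0^t(\abs{\vec a(s)}_{\rho,\sigma}^3+\abs{\vec a(s)}_{\rho,\sigma}^4)\,ds$, the cubic term arising from those Leibniz terms in which one factor is of low index and is therefore estimated directly by $\abs{\vec a}_{\rho,\sigma}$ without any factorial prefactor. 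This closes the proof.
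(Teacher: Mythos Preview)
Your proof is correct and follows essentially the same approach as the paper's: both use integration by parts in $z$ after recognizing a total-derivative structure, then estimate the resulting Leibniz sum via \eqref{emix}, \eqref{uint+}, and the combinatorial argument of Lemma~\ref{lemmac}. Your observation that the entire integrand equals $\partial_z G$ with $G=(\partial_x u+\partial_y v)\int_0^z\mathcal U\,d\tilde z$ is a mild streamlining: the paper instead treats the term $(\partial_x\partial_z u)\int_0^z\mathcal U\,d\tilde z$ on its own, writes it as $\partial_z\bigl[(\partial_x u)\int_0^z\mathcal U\,d\tilde z\bigr]-(\partial_x u)\mathcal U$, and is thereby led to a separate estimate $I_2$ on $\bigl(\partial_x^m[(\partial_x u)\mathcal U],\,\partial_x^m\mathcal U\bigr)_{L^2}$ (controlled after bounding $\norm{\partial_x^m\mathcal U}_{L_{x,y}^2(L_z^\infty)}$ by $\norm{\partial_x^m\mathcal U}_{L^2}+\norm{\partial_z\partial_x^m\mathcal U}_{L^2}$). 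Your combined identity absorbs this into $G$ automatically, so no $I_2$-type step is needed.

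One small remark: in your route the bound on $\tfrac12\int_0^t\norm{\partial_x^m G}_{L^2}^2\,ds$ actually produces only the quartic contribution, since $\norm{\partial_x^m G}_{L^2}\leq C\dfrac{[(m-6)!]^\sigma}{\rho^{m-6}}\abs{\vec a}_{\rho,\sigma}^2$ and squaring gives $\abs{\vec a}_{\rho,\sigma}^4$. Your explanation for a cubic term does not apply here, but this is harmless because $\abs{\vec a}_{\rho,\sigma}^4$ alone lies within the stated upper bound $\abs{\vec a}_{\rho,\sigma}^3+\abs{\vec a}_{\rho,\sigma}^4$; the cubic piece in the paper's statement is produced precisely by the $I_2$ estimate that your approach bypasses.
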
 

\begin{proof}  We only need  to treat the first  term on the left side and use  Leibniz formula and integration by parts   to write
\begin{eqnarray*}
\begin{aligned}
	&\int_0^{t}  \Big(\partial_x^{m}\Big[(\partial_x\partial_zu )\int_0^z\mathcal U d\tilde z   \Big],\ \partial_x^{m} \mathcal U\Big)_{L^2} ds\\
	&=- \int_0^{t}  \Big(\partial_x^{m}\Big[(\partial_xu )\int_0^z\mathcal U d\tilde z   \Big],\ \partial_z \partial_x^{m} \mathcal U\Big)_{L^2} ds
	- \int_0^{t}  \Big(\partial_x^{m}\big[(\partial_xu ) \mathcal U  \big],\   \partial_x^{m} \mathcal U\Big)_{L^2} ds\\
	&\leq I_1+I_2,
	\end{aligned}
\end{eqnarray*}
with
\begin{eqnarray*}
	 I_1&=& 	\int_0^{t} \sum_{j=1}^{[m/2]}{{m}\choose j} \norm{  \comi z^{\ell}\partial_x^j\partial_x  u}_{L_{x,y}^\infty(L_z^2)}\big\|\comi z^{-\ell}\int_0^z\partial_x^{m-j} \mathcal U d\tilde z \big\|_{L_{x,y}^2(L_z^\infty)}\norm{\partial_z\partial_x^m\mathcal U}_{L^2}ds\\ 
	 &&+  	\int_0^{t} \sum_{j=[m/2]+1}^{m } {{m}\choose j} \norm{  \comi z^{\ell}\partial_x^j\partial_x  u}_{L^2}\big\|\comi z^{-\ell}\int_0^z\partial_x^{m-j} \mathcal U d\tilde z \big\|_{L^\infty} \norm{\partial_z\partial_x^m\mathcal U}_{L^2} ds,\\
	 I_2&=& \int_0^{t}  \sum_{0\leq j \leq [m/2]}{{m}\choose j} \norm{ \partial_x^j\partial_x  u}_{L_{x,y}^\infty(L_z^2)}\big\| \partial_x^{m-j} \mathcal U   \big\|_{L^2}\norm{ \partial_x^m\mathcal U}_{L_{x,y}^2(L_z^\infty)} ds   \\
	  &&+ \int_0^{t}   \sum_{[m/2]+1\leq j \leq m } {{m}\choose j} \norm{   \partial_x^j\partial_x u}_{L^2}\big\| \partial_x^{m-j} \mathcal U  \big\|_{L_{x,y}^\infty(L_z^2)}\norm{ \partial_x^m\mathcal U}_{L_{x,y}^2(L_z^\infty)} ds.
	\end{eqnarray*}
 Now we follow the similar argument as in the proof of Lemma \ref{lemmac}, 
  using  the estimates \eqref{uint+} and  \eqref{emix} as well as  the Sobolev inequality \eqref{soblev},  to compute
\begin{eqnarray*}\begin{aligned}
	&\sum_{0\leq j \leq [m/2]}{{m}\choose j} \norm{  \comi z^{\ell}\partial_x^j\partial_x  u}_{L_{x,y}^\infty(L_z^2)}\big\|\comi z^{-\ell}\int_0^z\partial_x^{m-j} \mathcal U d\tilde z \big\|_{L_{x,y}^2(L_z^\infty)}\\
	&\leq     C   \sum_{j=4}^{  [m/2]} \frac{m!} {j!(m-j)!} \frac{[(j-4)!]^\sigma}{\rho^{j-4}} \abs{\vec a}_{\rho,\sigma}\times \frac{[(m-j-6)!]^\sigma}{\rho^{m-j-6}} \abs{\vec a}_{\rho,\sigma} \\
	&\quad + C   \sum_{0\leq j\leq 3} \frac{m!} {j!(m-j)!}  \abs{\vec a}_{\rho,\sigma}\times \frac{[(m-j-6)!]^\sigma}{\rho^{m-j-6}} \abs{\vec a}_{\rho,\sigma}  \\
	&\leq C  \frac{\abs{\vec a}_{\rho,\sigma}^2 }{\rho^{m-6}}  \sum_{j=4}^{  [m/2]} \frac{ (m-6)! m^6 [(j-4)!]^{\sigma-1} [(m-j-6)!]^{\sigma-1} } {j^4(m-j)^6} +C  \frac{   [(m-6)!]^{\sigma}  }{ \rho^{m-6}}  \abs{\vec a}_{\rho,\sigma}^2\\
	&\leq  C  \frac{ [(m-6)!]^{\sigma}  }{\rho^{m-6}} \abs{\vec a}_{\rho,\sigma}^2.
	\end{aligned}
\end{eqnarray*}
 Similarly,
 \begin{eqnarray*}
 	\sum_{[m/2]+1\leq j \leq m } {{m}\choose j} \norm{  \comi z^{\ell}\partial_x^j\partial_x  u}_{L^2}\big\|\comi z^{-\ell}\int_0^z\partial_x^{m-j} \mathcal U d\tilde z \big\|_{L^\infty} \leq  C  \frac{ [(m-6)!]^{\sigma}  }{\rho^{m-6}} \abs{\vec a}_{\rho,\sigma}^2.
 \end{eqnarray*}
 Thus
 \begin{eqnarray*}
 	I_1\leq   \frac{1}{8}\int_0^{t}\norm{ \partial_z \partial_x^m\mathcal U}_{L^2}^2ds+  C\frac{  [\inner{m-6}!]^{2\sigma}}{\rho^{2(m-6)}}\int_0^{t}    \abs{\vec a(s)}_{ \rho,\sigma}^4  ds.
 \end{eqnarray*}
Observe  $\norm{ \partial_x^m\mathcal U}_{L_{x,y}^2(L_z^\infty)}\leq  C\norm{ \partial_x^m\mathcal U}_{^2}+C\norm{ \partial_z\partial_x^m\mathcal U}_{L^2}$. Then following  the  argument for treating $I_1$ we have also
\begin{eqnarray*}
	I_2\leq   \frac{1}{8}\int_0^{t}\norm{ \partial_z \partial_x^m\mathcal U}_{L^2}^2ds+  C\frac{  [\inner{m-6}!]^{2\sigma}}{\rho^{2(m-6)}}\int_0^{t} \big(  \abs{\vec a(s)}_{ \rho,\sigma}^3+   \abs{\vec a(s)}_{ \rho,\sigma}^4\big)  ds.
\end{eqnarray*}
 Then
 \begin{multline*}
	\int_0^{t}  \Big(\partial_x^{m}\Big[(\partial_x\partial_zu )\int_0^z\mathcal U d\tilde z   \Big],\ \partial_x^{m} \mathcal U\Big)_{L^2} ds\leq I_1+I_2\\
	\leq  \frac{1}{4}\int_0^{t}\norm{ \partial_z \partial_x^m\mathcal U}_{L^2}^2ds+  C\frac{  [\inner{m-6}!]^{2\sigma}}{\rho^{2(m-6)}}\int_0^{t} \big(  \abs{\vec a(s)}_{ \rho,\sigma}^3+   \abs{\vec a(s)}_{ \rho,\sigma}^4\big)  ds.
\end{multline*}
Just following the argument above with slight modification,  the other terms can be controlled by the same upper bound as above, 
 The proof of Lemma \ref{lemgm} is completed.  
\end{proof}	

 \begin{proof}
 	[Completion of the proof of Proposition \ref{prolambda}]
 	Now we  put  the estimates in Lemmas \ref{lemhig}-\ref{lemgm}  into \eqref{uma+} to obtain,   for any  $ m\geq 6,$   any $t\in[0,T]$ and    any pair $\inner{\rho,\tilde\rho}$ with  $0<\rho<\tilde\rho< \rho_0\leq 1$,   
\begin{eqnarray*}
	\norm{\partial_x^{m} \mathcal U(t)}_{L^2}^2 
	\leq  C\frac{ [\inner{m-6}!]^{2\sigma}}{\rho^{2(m-6)}} \bigg( \int_0^{t}  \big(\abs{\vec a(s)}_{ \rho,\sigma}^2+   \abs{\vec a(s)}_{ \rho,\sigma}^4\big)ds+C_*  \int_0^{t}   \frac{\abs{\vec a(s)}_{\tilde \rho,\sigma}^2}{\tilde\rho-\rho}ds\bigg).
\end{eqnarray*}
Similarly the above estimate also holds with $\partial_x^m$ replaced by $\partial_y^m.$  Thus by \eqref{realp} we have,  any $t\in[0,T]$ and   for  any pair $\inner{\rho,\tilde\rho}$ with  $0<\rho<\tilde\rho< \rho_0\leq 1$, 
\begin{multline*}
		 	\sup_{\abs\alpha\geq 6}\frac{\rho^{2(\abs\alpha-6)}}{   [\inner{\abs\alpha-6}!]^{2\sigma}}   \norm{\partial^{\alpha} \mathcal U(t)}_{L^2}^2  \\
		 	 \leq C        \int_0^{t}   \big( \abs{\vec a(s)}_{ \rho,\sigma}^2+\abs{\vec a(s)}_{ \rho,\sigma}^4\big)   \,ds+  CC_*  \int_0^{t}  \frac{  \abs{\vec a(s)}_{ \tilde\rho,\sigma}^2}{\tilde\rho-\rho}\,ds.
\end{multline*} 
It can be checked
 straightforwardly
that the same  upper bound holds for $\norm{\partial^{\alpha} \mathcal U}_{L^2}$  with $\abs\alpha\leq 5.$    Then the desired estimate for $\partial^{\alpha} \mathcal U$ in Proposition \ref{prolambda} follows, and similarly for $\partial^{\alpha} \widetilde{ \mathcal U}$.   The proof  of Proposition \ref{prolambda} is thus completed.
 \end{proof}	 	
 	 	
 	 	\section{Estimate on $\norm{(u,v)}_{\rho,\sigma}$}\label{secofuv}
 	
 The main estimate on $\norm{(u,v)}_{\rho,\sigma}$ can be stated as follows, recalling  $\norm{ (u,v\big)}_{\rho,\sigma}$  is given in Definition \ref{defgev}.  
 	 	
\begin{proposition}\label{propuv++++}
 Under Assumption \ref{assmain} we have,  
 	  for any    $t\in[0,T]$ and  
  any pair $\inner{\rho,\tilde\rho}$ with $0<\rho<\tilde\rho<\rho_0\leq 1$,    
\begin{multline*}  
  \norm{\big(u(t),v(t)\big)}_{\rho,\sigma}^2   \leq   C_1\norm{(u_0,v_0)}_{2\rho_0,\sigma}^2\\
  + C  C_*^3\bigg(   \int_0^{t}  \big(\abs{\vec a(s)}_{\rho,\sigma}^2+\abs{\vec a(s)}_{\rho,\sigma}^4 \big)ds
 		+   	\int_0^{t}   \frac{ \abs{\vec a(s)}_{\tilde\rho,\sigma}^2}{\tilde\rho-\rho} ds\bigg),
\end{multline*}
where $C_*\geq 1 $ is the constant given in \eqref{condi1}.
 \end{proposition}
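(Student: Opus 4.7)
The plan is to apply $\partial^\alpha\partial_z^j$ to the first two equations of \eqref{prandtl}, take the weighted $L^2$ inner product with $\comi z^{2(\ell+j)}\partial^\alpha\partial_z^j u$ and $\comi z^{2(\ell+j)}\partial^\alpha\partial_z^j v$, and then assemble the resulting identities according to the supremum structure of Definition \ref{defgev}. The dissipation $-\partial_z^2$ contributes a nonnegative term $\int_0^t\|\comi z^{\ell+j}\partial^\alpha\partial_z^{j+1} u\|_{L^2}^2\,ds$, while the contributions coming from $\partial_z\comi z^{2(\ell+j)}$ are absorbed via Hardy-type inequalities thanks to $\ell>1/2$. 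The transport operator $u\partial_x+v\partial_y+w\partial_z$ is skew-symmetric modulo commutators, so after integration by parts we are left with Leibniz commutator terms together with boundary traces at $z=0$; the boundary pieces either vanish or are controlled by the compatibility conditions \eqref{comcon}.

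For the low-order range $|\alpha|+j\leq 6$, Assumption \ref{assmain} and the Sobolev embeddings \eqref{soblev} provide pointwise bounds on all derivatives of $u,v$ and $w$ that enter the commutators, so each such commutator can be estimated directly by a quantity of the form $CC_*^2(1+\abs{\vec a}_{\rho,\sigma}^2)$. Time integration then produces the $CC_*^3\int_0^t(\abs{\vec a}_{\rho,\sigma}^2+\abs{\vec a}_{\rho,\sigma}^4)\,ds$ contribution announced in the statement, while $C_1\norm{(u_0,v_0)}_{2\rho_0,\sigma}^2$ appears straight from the initial value of the weighted norm after absorbing the constants accumulated in the Hardy bounds on the weight.

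In the high-order range $|\alpha|+j\geq 7$ each term carries the Gevrey prefactor $\rho^{|\alpha|+j-7}/[(|\alpha|+j-7)!]^\sigma$. The generic pieces of the Leibniz expansion are controlled by the same combinatorial procedure carried out in Lemmas \ref{lemhig}--\ref{lemgm}: a split of the Leibniz sum into low and high $j$ ranges, Sobolev embeddings as in \eqref{soblev}, factorial estimates, and the scale inequality \eqref{factor} to convert an $|\alpha|$-dependent loss into $(\tilde\rho-\rho)^{-1}$. The genuinely dangerous piece is the top-order convective term $(\partial^\alpha\partial_z^j w)(\partial_z u)$ arising from $\partial^\alpha\partial_z^j(w\partial_z u)$ when all tangential derivatives fall on $w$: since $\partial^\alpha w=-\int_0^z(\partial_x\partial^\alpha u+\partial_y\partial^\alpha v)\,d\tilde z$, this formally costs one tangential derivative. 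Here the new cancellation enters: using the identities derived from \eqref{laga},
\[
\partial_x\partial^{\alpha'}u=\partial^{\alpha'}\lambda+\partial^{\alpha'}\Bigl[(\partial_z u)\int_0^z\mathcal U\,d\tilde z\Bigr],
\]
together with its symmetric analogues involving $\tilde\lambda,\delta,\tilde\delta$ and $\widetilde{\mathcal U}$, we rewrite the dangerous highest-order derivatives in terms of the auxiliary functions already measured in Definition \ref{gevspace}. The extra factor $|\alpha|$ placed in front of $\norm{\partial^\alpha\lambda}_{L^2}$, $\norm{\partial^\alpha\delta}_{L^2}$, $\norm{\partial^\alpha\tilde\lambda}_{L^2}$ and $\norm{\partial^\alpha\tilde\delta}_{L^2}$ in the definition of $\abs{\vec a}_{\rho,\sigma}$ is precisely the regularity budget needed to absorb the unit of derivative lost in this rewriting, after which \eqref{factor} delivers the $(\tilde\rho-\rho)^{-1}$ improvement.

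Combining the two ranges, using the symmetry between the $u$- and $v$-equations, and taking the supremum in $\alpha$ and $j$ required by Definition \ref{defgev} yields the announced estimate. The main obstacle, and the only genuinely new ingredient beyond the arguments of Section \ref{sec5}, is the cancellation step: one must check that every piece of the Leibniz expansion of the highest-order convective contribution is absorbed either by the $\lambda,\delta,\mathcal U$ (and their tilded counterparts) terms of $\abs{\vec a}_{\rho,\sigma}$, which carry the extra factor $|\alpha|$, or by lower-order factors supplied by Assumption \ref{assmain}, so that no net loss of one tangential derivative survives.
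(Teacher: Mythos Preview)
Your handling of the dangerous top-order convective term has a genuine gap. You propose to rewrite
\[
\partial_x\partial^{\alpha}u=\partial^{\alpha}\lambda+\partial^{\alpha}\Bigl[(\partial_z u)\int_0^z\mathcal U\,d\tilde z\Bigr]
\]
and claim that the extra factor $|\alpha|$ in front of $\|\partial^\alpha\lambda\|_{L^2}$ in the definition of $|\vec a|_{\rho,\sigma}$ absorbs the derivative loss. This works only for the first piece. The second piece, at top Leibniz order, contains $(\partial_z u)\int_0^z\partial^{\alpha}\mathcal U\,d\tilde z$, and in Definition~\ref{gevspace} the quantity $\|\partial^{\alpha}\mathcal U\|_{L^2}$ is measured at the scale $[(|\alpha|-6)!]^{\sigma}/\rho^{|\alpha|-6}$, which is \emph{exactly} the scale of $\|\partial_x\partial^{\alpha}u\|_{L^2}$ itself --- there is no gain of $|\alpha|^{-1}$ here. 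Consequently, when this piece is paired with $\comi z^{\ell}\partial^{\alpha}u$ and multiplied by the Gevrey prefactor $\rho^{2(|\alpha|-7)}/[(|\alpha|-7)!]^{2\sigma}$, the resulting factor is of order $|\alpha|^{\sigma}/\rho$; for $\sigma=2$ this is $|\alpha|^{2}/\rho$, and a single application of \eqref{factor} cannot convert it into $(\tilde\rho-\rho)^{-1}$. The best you get is $(\tilde\rho-\rho)^{-2}$, and the estimate does not close in the form required by the proposition (nor by the abstract Cauchy--Kowalewski scheme in Section~\ref{sec8}).

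The paper circumvents this by a different mechanism. For the pure tangential case $j=0$ (Lemma~\ref{lemtan}) it does \emph{not} estimate $\partial_x^m u$ directly. Instead it forms
\[
\psi_m=\partial_x^m u-(\partial_z u)\int_0^z\partial_x^{m-1}\mathcal U\,d\tilde z,
\]
and subtracts $(\partial_z u)\partial_x^{m-1}$ applied to \eqref{mau} from the equation for $\partial_x^m u$; the term $(\partial_x^m w)\partial_z u$ then cancels \emph{identically}, and $\psi_m$ satisfies a good equation with only lower-order forcing. One recovers $\|\comi z^{\ell}\partial_x^m u\|_{L^2}$ from $\|\comi z^{\ell}\psi_m\|_{L^2}$ plus $C_*\|\partial_x^{m-1}\mathcal U\|_{L^2}$, and the latter --- crucially at order $m-1$, not $m$ --- is controlled by Proposition~\ref{prolambda} at the correct scale $[(m-7)!]^{\sigma}/\rho^{m-7}$. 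For the mixed case $1\le j\le 5$ (Lemma~\ref{lemnor}) the situation is actually easier: one integration by parts in $z$, using the available dissipation $\|\partial_z(\comi z^{\ell+j}\partial_x^m\partial_z^j u)\|_{L^2}^2$, brings the convective term down to the right scale without any $\tilde\rho$-loss, so no cancellation is needed there.
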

  
  In view of Definition \ref{defgev} of $\norm{(u,v)}_{\rho,\sigma},$ the above proposition will follow from the  two lemmas as below.  
  
 \begin{lemma}[Estimate on the tangential derivatives] \label{lemtan}
  Under the same assumption as in Proposition \ref{propuv++++} we have, recalling $\partial^\alpha=\partial_x^{\alpha_1}\partial_y^{\alpha_2}$, 
 	 \begin{multline*}	
\sup_{\abs\alpha \geq 7}  \frac{\rho^{2(\abs\alpha-7)}} {  [(\abs\alpha-7)!]^{2\sigma}  } \norm{\comi z^{\ell}\partial^\alpha u (t)}_{L^2}^2 
 + \sup_{\abs\alpha \leq 6}    \norm{\comi z^{\ell}\partial^\alpha u (t)}_{L^2}^2 \\	
 		 \leq  C_1 \norm{(u_0,v_0)}_{2\rho_0,\sigma}^2 + C  C_*^3\bigg(   \int_0^{t}  \big(\abs{\vec a(s)}_{\rho,\sigma}^2+\abs{\vec a(s)}_{\rho,\sigma}^4 \big)ds
 		+   	\int_0^{t}   \frac{ \abs{\vec a(s)}_{\tilde\rho,\sigma}^2}{\tilde\rho-\rho} ds\bigg).
 	\end{multline*}
 	Similarly the upper bound still holds with  $\partial^\alpha u$ replaced by $\partial^\alpha v$. 
 \end{lemma}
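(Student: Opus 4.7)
The plan is to split the range of $\abs\alpha$ into $\abs\alpha\le 6$ and $\abs\alpha\ge 7$; by the triangle inequality \eqref{realp} in the Remark following Definition \ref{gevspace}, it is enough to derive the bound for pure $\partial_x^m$-derivatives of $u$, the pure $\partial_y^m$ case being symmetric. For $\abs\alpha\le 6$ the estimate $\norm{\comi z^\ell\partial^\alpha u}_{L^2}\le C_*$ is immediate from Assumption \ref{assmain}, so the real work lies in the high-order range $m=\abs\alpha\ge 7$.

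For $m\ge 7$, I would apply $\partial_x^m$ to the first equation of \eqref{prandtl} and take the $L^2(\mathbb R_+^3)$ inner product of the evolution equation with $\comi z^{2\ell}\partial_x^m u$. Writing $L=\partial_t+u\partial_x+v\partial_y+w\partial_z-\partial_z^2$ and using $u|_{z=0}=w|_{z=0}=0$ to discard boundary contributions from integration by parts in $z$, we obtain the energy identity
\begin{equation*}
\frac{1}{2}\frac{d}{dt}\norm{\comi z^\ell\partial_x^m u}_{L^2}^2+\norm{\comi z^\ell\partial_z\partial_x^m u}_{L^2}^2 = \text{advective commutators}+\text{weight corrections}.
\end{equation*}
The weight corrections (from $\partial_z^2$ and $w\partial_z$ not commuting with $\comi z^{2\ell}$) together with all subleading convective contributions $1\le j\le m-1$ in the three sums $(\partial_x^j u)\partial_x^{m-j+1}u$, $(\partial_x^j v)\partial_x^{m-j}\partial_y u$, $(\partial_x^j w)\partial_x^{m-j}\partial_z u$ are handled along the same lines as Lemmas \ref{lemmac}--\ref{lemgm}: split each binomial sum at $j=[m/2]$, place the high-derivative factor in $L^2$ and the low-derivative factor in $L^\infty$ via the Sobolev inequality \eqref{soblev}, invoke \eqref{emix}, \eqref{uint+}, \eqref{eofw}, and convert the one-derivative loss into $(\tilde\rho-\rho)^{-1}$ using \eqref{factor}. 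Each low-derivative factor bounded by Assumption \ref{assmain} contributes one power of $C_*$, so the accumulated $C_*$-power never exceeds three.

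The essential obstacle is the top-order $j=m$ contribution in the $w$-sum, $\int(\partial_x^m w)(\partial_z u)(\partial_x^m u)\comi z^{2\ell}$, which loses one tangential derivative. I would handle it by invoking the cancellation behind \eqref{mau}: since $-\partial_x w=L\bigl(\int_0^z\mathcal U\,d\tilde z\bigr)$, one has
\begin{equation*}
\partial_x^m w=-L\,\partial_x^{m-1}\!\!\int_0^z\mathcal U\,d\tilde z+[L,\partial_x^{m-1}]\!\int_0^z\mathcal U\,d\tilde z.
\end{equation*}
The $-L$-part, multiplied by $\partial_z u$, combines with $L\partial_x^m u$ on the left to yield $L$ applied to the good unknown $\partial_x^m u-(\partial_z u)\partial_x^{m-1}\!\int_0^z\mathcal U\,d\tilde z$, which by Leibniz and the definition \eqref{laga} of $\lambda$ equals $\partial_x^{m-1}\lambda$ plus sums $(\partial_x^k\partial_z u)\partial_x^{m-1-k}\!\int_0^z\mathcal U\,d\tilde z$ with $k\ge 1$. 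Because this good unknown now sits inside the material derivative rather than being differentiated again, the top-order loss is eliminated: the pairing $(L\Phi,\comi z^{2\ell}\partial_x^m u)_{L^2}$ produces a total time derivative, a dissipation term on $\Phi$, plus lower-order products involving at most $\partial_x^{m-1}\lambda$ and $\partial_x^m\mathcal U$, which are controlled by Definition \ref{gevspace} and Proposition \ref{prolambda} respectively.

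The remaining task is then a tedious but routine accounting in the same spirit as Proposition \ref{prolambda}: the combinatorial product $\binom{m}{j}[(j-c)!]^\sigma[(m-j-c')!]^\sigma$ collapses to $[(m-7)!]^\sigma$ under the slack $\sigma\ge 3/2$, yielding the desired factor $[(m-7)!]^{2\sigma}/\rho^{2(m-7)}$. The principal technical subtlety I expect is the precise execution of the cancellation in the weighted setting: since $\comi z^{2\ell}$ does not commute with $L$, extra factors such as $(\partial_z u)\comi z^{2\ell-1}\partial_x^{m-1}\mathcal U$ arise from the integration by parts and must be absorbed either into the dissipation $\norm{\comi z^\ell\partial_z\partial_x^m u}_{L^2}^2$ or into $C_*$-controlled products via \eqref{soblev}; moreover, the $z=0$ boundary trace from the good-unknown integration by parts vanishes thanks to $u|_{z=0}=0$ and $\partial_z\mathcal U|_{z=0}=0$. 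After summing over $m\ge 7$, taking suprema, integrating in time from $0$ to $t$, and symmetrising to $\partial_y^m u$, the stated bound follows, with the initial-data term $C_1\norm{(u_0,v_0)}_{2\rho_0,\sigma}^2$ arising from the value at $t=0$ after choosing $2\rho_0>\rho_0>\rho$.
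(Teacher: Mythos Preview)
Your core idea is correct and is precisely the paper's mechanism: form the good unknown $\psi_m=\partial_x^m u-(\partial_z u)\int_0^z\partial_x^{m-1}\mathcal U\,d\tilde z$ and use \eqref{mau} so that the bad term $(\partial_x^m w)\partial_z u$ cancels, leaving $L\psi_m=F_m-L_m$ with only good sources (this is exactly \eqref{eqp}). Your identification $\psi_m=\partial_x^{m-1}\lambda+\sum_{k\ge1}\binom{m-1}{k}(\partial_x^k\partial_z u)\,\partial_x^{m-1-k}\!\int_0^z\mathcal U$ is also correct, as is the plan for the commutator sums and weight corrections.

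The gap is in the energy step. You start the identity for $\partial_x^m u$ and then claim that the pairing $(L\Phi,\comi z^{2\ell}\partial_x^m u)_{L^2}$ with $\Phi=\psi_m$ ``produces a total time derivative, a dissipation term on $\Phi$''. It does not: once you have already integrated $(L\partial_x^m u,\comi z^{2\ell}\partial_x^m u)$ to get $\tfrac12\tfrac{d}{dt}\norm{\comi z^\ell\partial_x^m u}^2$, there is no free $L\partial_x^m u$ left to combine with $(\partial_z u)L\partial_x^{m-1}\!\int_0^z\mathcal U$; and if instead you pair $L\psi_m$ against $\comi z^{2\ell}\partial_x^m u$, the $\partial_t$-part yields $\tfrac{d}{dt}(\psi_m,\comi z^{2\ell}\partial_x^m u)-(\psi_m,\comi z^{2\ell}\partial_t\partial_x^m u)$, and the second piece, via the equation, reintroduces exactly $(\partial_x^m w)\partial_z u$. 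The fix, which is what the paper does, is to run the energy estimate on $\psi_m$ itself: multiply \eqref{eqp} by $\comi z^\ell$, test against $\comi z^\ell\psi_m$ (note $\psi_m|_{z=0}=0$), and obtain the bound \eqref{epsm} on $\norm{\comi z^\ell\psi_m}$. Only afterwards does one recover $\partial_x^m u$ via
\[
\norm{\comi z^\ell\partial_x^m u}_{L^2}^2\le 2\norm{\comi z^\ell\psi_m}_{L^2}^2+2\big\|\comi z^\ell(\partial_z u)\!\int_0^z\partial_x^{m-1}\mathcal U\,d\tilde z\big\|_{L^2}^2,
\]
the last term being bounded by $CC_*^2\norm{\partial_x^{m-1}\mathcal U}_{L^2}^2$ and then by Proposition~\ref{prolambda}. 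A minor additional point: for $\abs\alpha\le 6$ you quote Assumption~\ref{assmain} to get the bound $C_*$, but the conclusion must have the form $C_1\norm{(u_0,v_0)}_{2\rho_0,\sigma}^2+\text{time integrals}$ with $C_1$ independent of $C_*$ (this structure is used in the bootstrap of Section~\ref{sec8}); one still needs a short energy estimate there, trivial since for low $m$ the term $(\partial_x^m w)\partial_z u$ is harmless.
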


\begin{proof}
 We need only to estimate $u$ since $v$ can be treated in the same way.   
Applying $\partial_x^m$ to the first equation in \eqref{prandtl}   gives
  \begin{eqnarray}\label{zu}
\big(\partial_t+u\partial_x+v\partial_y+w\partial_z-\partial_z^2\big)  \partial_x^m  u = -(\partial_x^mw)\partial_z u +F_{m}
    	\end{eqnarray}
    	with
    	\begin{equation*} 
		F_{m}=-\sum_{j=1}^{m}{{m}\choose j} \Big[(\partial_x^j u) \partial_x^{m-j+1} u+ \big(\partial_x^j v\big)\partial_x^{m-j}\partial_y u \Big]-\sum_{j=1}^{m-1}{{m}\choose j}  (\partial_x^j w) \partial_x^{m-j} \partial_z u.
	\end{equation*}
	On the other hand,  applying $(\partial_z u)\partial_x^{m-1}$ to \eqref{mau}   we have
	\begin{eqnarray}\label{eqint}
		\big(\partial_t+u\partial_x+v\partial_y+w\partial_z-\partial_z^2\big) (\partial_z u) \int_0^z \partial_x^{m-1} \mathcal U  d\tilde z=- (\partial_x^mw) \partial_z u+L_m
	\end{eqnarray}
	with 
	\begin{equation*}
	\begin{aligned}
		 L_m=&-(\partial_z u) \sum_{j=1}^{m-1}{{m-1}\choose j} \Big[(\partial_x^j u) \int_0^z \partial_x^{m-j} \mathcal U d\tilde z+ \big(\partial_x^j v\big) \int_0^z \partial_x^{m-1-j}\partial_y \mathcal U d\tilde z    \Big] \\
		&-(\partial_z u) \sum_{j=1}^{m-1}{{m-1}\choose j}  (\partial_x^j w) \partial_{x}^{m-1-j} \mathcal U   \\
		& +\Big[(\partial_y v)\partial_zu- ( \partial_yu)\partial_zv\Big]\int_0^z \partial_x^{m-1} \mathcal U  d\tilde z-2(\partial_z^2 u)\partial_x^{m-1} \mathcal U,
		\end{aligned}
		\end{equation*}
	where we have used the fact that, denoting by  $[T_1,T_2]=T_1T_2-T_2T_1$  the commutator of two operators $T_1, T_2,$ 
	\begin{multline*}
	\big[	\partial_t+u\partial_x+v\partial_y+w\partial_z-\partial_z^2, \  (\partial_z u)\big]\\=\big(\partial_t+u\partial_x+v\partial_y+w\partial_z-\partial_z^2\big)\partial_zu-2(\partial_z^2u)\partial_z 
	=(\partial_y v)\partial_zu- ( \partial_yu)\partial_zv-2(\partial_z^2u)\partial_z.
	\end{multline*}
 Now we subtract the  equation \eqref{eqint} by \eqref{zu} to eliminate the higher order term $(\partial_x^mw)\partial_zu$ and  this   gives the equation for
 \begin{eqnarray}\label{plam}
 	\psi_m=  \partial_x^m u-(\partial_zu)\int_0^z\partial_x^{m-1} \mathcal U d\tilde z ; 
 \end{eqnarray}
 that is, 
 \begin{equation} \label{eqp}
 		\big(\partial_t+u\partial_x+v\partial_y+w\partial_z-\partial_z^2\big) \psi_m 
 		=  F_{m}-L_m,
 \end{equation}
 and thus
 \begin{multline*}
 	\big(\partial_t+u\partial_x+v\partial_y+w\partial_z-\partial_z^2\big) \comi z^\ell\psi_m \\
 		= \comi z^\ell F_{m}-\comi z^\ell L_m+w(\partial_z \comi z^\ell) \psi_m-(\partial_z^2 \comi z^\ell) \psi_m-2(\partial_z\comi z^\ell)\partial_z\psi_m.
 \end{multline*}
Then we take the scalar product with $ \comi z^\ell \psi_m$ and 
observe $ \comi z^\ell\psi_m|_{z=0}=0,$ to obtain 
 \begin{equation}\label{dps}
 \begin{aligned}
 &\frac{1}{2} \norm{\comi z^{\ell}\psi_m(t)}_{L^2}^2-\frac{1}{2} \norm{\comi z^{\ell}\psi_m(0)}_{L^2}^2+\int_0^{t} \big\|\partial_z\big[\comi z^{\ell}\psi_m(s)\big]\big\|_{L^2}^2 ds\\
& = \int_0^{t} \Big(\big(\partial_s+u\partial_x+v\partial_y+w\partial_z-\partial_z^2\big) \comi z^\ell\psi_m,\  \comi z^{\ell}\psi_m\Big)_{L^2}ds\\
& =\int_0^{t} \Big(\comi z^\ell F_{m},  \ \comi z^{\ell}\psi_m\Big)_{L^2}ds-\int_0^{t} \Big(\comi z^\ell L_m,\  \comi z^{\ell}\psi_m\Big)_{L^2}ds\\
&\quad+\int_0^{t} \Big( w(\partial_z \comi z^\ell) \psi_m-(\partial_z^2 \comi z^\ell) \psi_m-2(\partial_z\comi z^\ell)\partial_z\psi_m, \  \comi z^{\ell}\psi_m\Big)_{L^2}ds.
 \end{aligned}
 \end{equation}
 Note for any $0<r\leq \rho_0$ we have, observing $C_*\geq 1,$
 \begin{equation}\label{pm+}
	\norm{\comi z^{\ell}\psi_m}_{L^2}\leq  \norm{\comi z^{\ell}\partial_x^m u}_{L^2}+CC_*\norm{\partial_x^{m-1}\mathcal U}_{L^2} \leq CC_* \frac{   [(m-7)!]^{\sigma}  } {r^{m-7}} \abs{\vec a}_{r,\sigma}
\end{equation} 
due to  the definition of $\psi_m$ given by  \eqref{plam} as well as \eqref{emix}, \eqref{uint+} and \eqref{condi1}. Then, in view of \eqref{eofw},   
\begin{multline}\label{rigs2}
	 \int_0^{t} \Big( w(\partial_z \comi z^\ell) \psi_m-(\partial_z^2 \comi z^\ell) \psi_m-2(\partial_z\comi z^\ell)\partial_z\psi_m, \  \comi z^{\ell}\psi_m\Big)_{L^2}ds\\
   \leq  C C_*^3\frac{    [(m-7)!]^{2\sigma} } {\rho^{2(m-7)}}\int_0^{t}  \abs{\vec a(s)}_{\rho,\sigma}^2ds. 
\end{multline}
Note $F_m$ is given in \eqref{zu}, and we apply similar computation as in the proof of Lemma \ref{lemmac}, using \eqref{emix} instead of \eqref{uint+}; this yields
 \begin{multline}\label{righs}
 	\int_0^{t} \Big(\comi z^\ell F_{m}, \  \comi z^{\ell}\psi_m\Big)_{L^2}ds  \\
 	 \leq  CC_* ^2\frac{    [(m-7)!]^{2\sigma} } {\rho^{2(m-7)}}\bigg(\int_0^{t}   \abs{\vec a(s)}_{\rho,\sigma}^3  ds + \int_0^{t} \frac{\abs{\vec a(s)}_{\tilde\rho,\sigma}^2}{\tilde\rho-\rho}ds\bigg).
 \end{multline}
 Recall $L_m$ is given in \eqref{eqint}.  Then  
\begin{eqnarray*}
\norm{\comi z^{\ell} L_m} &\leq& 	\norm{\comi z^{\ell+1}\partial_z u }_{L^\infty}\sum_{j=1}^{m-1}{{m-1}\choose j}  \big\|\comi z^{-1}(\partial_x^j u) \int_0^z \partial_x^{m-j} \mathcal U d\tilde z\big\|_{L^2} \\
&&+	\norm{\comi z^{\ell+1}\partial_z u }_{L^\infty}\sum_{j=1}^{m-1}{{m-1}\choose j}  \big\|\comi z^{-1}(\partial_x^j v) \int_0^z \partial_x^{m-1-j}\partial_y \mathcal U d\tilde z\big\|_{L^2}\\
&&+	\norm{\comi z^{\ell+1}\partial_z u }_{L^\infty}\sum_{j=1}^{m-1}{{m-1}\choose j} \Big[\big\|\comi z^{-1}(\partial_x^j w)   \partial_x^{m-1-j} \mathcal U  \big\|_{L^2}\\
&&
		+\Big\|\comi z^{2\ell+1}\Big[(\partial_y v)\partial_zu- ( \partial_yu)\partial_zv\Big]\Big\|_{L^\infty}\big\|\comi z^{-\ell-1} \int_0^z \partial_x^{m-1} \mathcal U  d\tilde z\big\|_{L^2}\\
		&&+2\norm{\comi z^{\ell}\partial_z^2 u}_{L^\infty}\norm{\partial_x^{m-1} \mathcal U}_{L^2}.
\end{eqnarray*}
 Thus we apply again the argument for proving Lemma \ref{lemmac} to obtain, observing \eqref{condi1} and using \eqref{uint+},  \eqref{emix}    and \eqref{eofw} ,
\begin{eqnarray*} 
	\norm{\comi z^{\ell} L_m}\leq   C \frac{    [(m-7)!]^{\sigma} } {\rho^{m-7}}\inner{\abs{\vec a}_{\rho,\sigma}^2+\abs{\vec a}_{\rho,\sigma}^3}  + C  C_*^2  m\frac{[(m-7)!]^\sigma}{\tilde\rho^{m-7}} \abs{\vec a}_{\tilde\rho,\sigma},
\end{eqnarray*} 
and thus, with \eqref{pm+} and \eqref{factor},
\begin{multline*}  
 \int_0^{t} \Big(\comi z^\ell L_{m}, \  \comi z^{\ell}\psi_m\Big)_{L^2}ds  \\
 \leq  CC_*^3 \frac{    [(m-7)!]^{2\sigma} } {\rho^{2(m-7)}}\bigg(\int_0^{t} \inner{\abs{\vec a}_{\rho,\sigma}^3+\abs{\vec a}_{\rho,\sigma}^4}ds  + \int_0^{t} \frac{\abs{\vec a(s)}_{\tilde\rho,\sigma}^2}{\tilde\rho-\rho}ds\bigg).
 \end{multline*}
Putting the above estimate and the estimates \eqref{rigs2} and \eqref{righs} into \eqref{dps}, yields
\begin{multline*}
 		\norm{\comi z^{\ell} \psi_m(t)}_{L^2}^2+\int_0^t \norm{\partial_z\big(\comi z^{\ell} \psi_m(s)\big)}_{L^2}^2 dt  
 		\leq   \norm{\comi z^\ell  \psi_m (0)}_{L^2}^2 \\+CC_*^3   \frac{   [(m-7)!]^{2\sigma}  }{\rho^{2(m-7)}}	\bigg(\int_0^{t}  \big(\abs{\vec a(s)}_{\rho,\sigma}^2+\abs{\vec a(s)}_{\rho,\sigma}^4 \big)ds 
 		+    \int_0^{t}   \frac{ \abs{\vec a(s)}_{\tilde\rho,\sigma}^2}{\tilde\rho-\rho} ds\bigg) .
 	\end{multline*}
Moreover 	observe  
 $
 	\comi z^{\ell} \psi_m|_{t=0}=\comi z^{\ell} \partial_x^{m}u_0 
$
and thus
\begin{eqnarray*}
 \norm{\comi z^\ell  \psi_m (0)}_{L^2}^2  \leq   \frac{[(m-7)!]^{2\sigma}  }{  (2\rho_0)^{2(m-7)}} \norm{(u_0,v_0)}_{2\rho_0,\sigma}^2\leq \frac{ [(m-7)!]^{2\sigma}  }{  \rho^{2(m-7)}} \norm{(u_0,v_0)}_{2\rho_0,\sigma}^2.
\end{eqnarray*} 
Then we obtain 
 \begin{multline} \label{epsm}
	\norm{\comi z^{\ell} \psi_m(t)}_{L^2}^2 +\int_0^t \norm{\partial_z\big(\comi z^{\ell} \psi_m(s)\big)}_{L^2}^2 dt
 		\leq   		 \frac{  [(m-7)!]^{2\sigma}  } {\rho^{2(m-7)}}\norm{(u_0,v_0)}_{2\rho_0,\sigma}^2 \\+ C  C_*^3 		 \frac{  [(m-7)!]^{2\sigma}  } {\rho^{2(m-7)}} \bigg(    \int_0^{t}  \big(\abs{\vec a(s)}_{\rho,\sigma}^2+\abs{\vec a(s)}_{\rho,\sigma}^4 \big)ds
 		+  	\int_0^{t}   \frac{ \abs{\vec a(s)}_{\tilde\rho,\sigma}^2}{\tilde\rho-\rho} ds\bigg).
 	\end{multline}
 Consequently this inequality, along with
 the estimate 
 	\begin{eqnarray*}
 		\norm{\comi z^{\ell}\partial_x^m u}_{L^2}^2\leq 2 \big\|\comi z^{\ell}\psi_m\big\|_{L^2}^2+2\big\|\comi z^{\ell}(\partial_zu)\int_0^z\partial_x^{m-1} \mathcal U d\tilde z\big\|_{L^2}^2
 	\end{eqnarray*}
 that are from the definition \eqref{plam} of $\psi_m,$ and the fact that 
 	\begin{eqnarray*}
 	\begin{aligned}
 		&\big\|\comi z^{\ell}(\partial_zu(t))\int_0^z\partial_x^{m-1} \mathcal U(t) d\tilde z\big\|_{L^2}^2 \\
 		& \leq 
 	\big\|\comi z^{\ell+1} \partial_zu (t)\big\|_{L_{x,y}^\infty(L_z^2)}^2\big\|\comi z^{-1} \int_0^z\partial_x^{m-1} \mathcal U(t) d\tilde z\big\|_{L_{x,y}^2(L_z^\infty)}^2\\
 	&\leq C C_*^2\norm{\partial_x^{m-1}\mathcal U(t)}_{L^2}^2 \\
 	&\leq C C_*^3\frac{  [(m-7)!]^{2\sigma}  } {\rho^{2(m-7)}} \bigg(  \int_0^{t}  \big(\abs{\vec a(s)}_{\rho,\sigma}^2+\abs{\vec a(s)}_{\rho,\sigma}^4 \big)ds
 		+   	\int_0^{t}   \frac{ \abs{\vec a(s)}_{\tilde\rho,\sigma}^2}{\tilde\rho-\rho} ds\bigg)
 		\end{aligned}
 	\end{eqnarray*}
 	due to \eqref{condi1} and Proposition \ref{prolambda},  yields that for any $m\geq 7$ and any $t\in[0,T],$
 	 \begin{multline*}	
   	\norm{\comi z^{\ell}\partial_x^m u (t)}_{L^2}^2  		 \leq 2\frac{  [(m-7)!]^{2\sigma}  } {\rho^{2(m-7)}} \norm{(u_0,v_0)}_{2\rho_0,\sigma}^2\\
   	+C  C_*^3  \frac{  [(m-7)!]^{2\sigma}  } {\rho^{2(m-7)}} \bigg(  \int_0^{t}  \big(\abs{\vec a(s)}_{\rho,\sigma}^2+\abs{\vec a(s)}_{\rho,\sigma}^4 \big)ds
 		+    	\int_0^{t}   \frac{ \abs{\vec a(s)}_{\tilde\rho,\sigma}^2}{\tilde\rho-\rho} ds\bigg).
 	\end{multline*}
 	 We have proven the assertion for $\partial^\alpha=\partial_x^m$ with $m\geq 7.$
 	By direct verification we can get the desired estimate for  $m\leq 6$.  Then we have obtained the estimate as desired for $\partial^\alpha=\partial_x^m$.    Moreover 
 	the above estimates also hold with $\partial_x^m$ replaced by $\partial_y^m,$ following the  similar argument.  Thus the desired estimate for general $\partial^\alpha u$ follows in view of \eqref{realp}. Similarly for  $\partial^\alpha v$. 
 	 The proof of Lemma \ref{lemtan} is completed.  
\end{proof}

  \begin{lemma}[Estimate on the mixed derivatives] \label{lemnor}
  Under the same assumption as in Proposition \ref{propuv++++} we have
 	 \begin{multline*}	
  \sup_{\stackrel{1\leq j\leq 5}{ \abs\alpha+j \geq 7}}  \frac{\rho^{2(\abs\alpha+j-7)}} {  [(\abs\alpha+j-7)!]^{2\sigma}  } 	\norm{\comi z^{\ell+j}\partial^\alpha \partial_z^ju (t)}_{L^2}^2
  +\sup_{\stackrel{1\leq j\leq 5} {\abs\alpha +j \leq 6}}  \norm{\comi z^{\ell+j}\partial^\alpha \partial_z^ju (t)}_{L^2}^2 \\	
 		 \leq  C_1\norm{(u_0,v_0)}_{2\rho_0,\sigma}^2+C     \int_0^{t}  \big(\abs{\vec a(s)}_{\rho,\sigma}^2+\abs{\vec a(s)}_{\rho,\sigma}^4 \big)ds.
 	\end{multline*}
 	Similarly for $\partial^\alpha v.$ 
 \end{lemma}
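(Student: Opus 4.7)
The plan is to apply $\partial^\alpha\partial_z^j$ with $1\leq j\leq 5$ to the first equation of \eqref{prandtl} and to perform a weighted $L^2$ energy estimate against $\langle z\rangle^{2(\ell+j)}\partial^\alpha\partial_z^j u$. The decisive structural point, which distinguishes this lemma from Lemma \ref{lemtan} and removes the need for the cancellation via $\mathcal{U}$, is the identity
$$\partial_z^j w = -\partial_z^{j-1}(\partial_x u+\partial_y v)\qquad\text{for }j\geq 1,$$
so every $z$-derivative of $w$ arising in the commutators is converted into tangential derivatives of $(u,v)$ with one fewer $z$-derivative and no loss of tangential regularity. Consequently the dangerous term $(\partial^\alpha w)\partial_z u$ that forced the introduction of $\mathcal U$ in Lemma \ref{lemtan} is now absent, and the right-hand side will feature only the Gevrey norm at the same radius $\rho$, with no $1/(\tilde\rho-\rho)$ contribution.

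Concretely I would first derive
$$(\partial_t+u\partial_x+v\partial_y+w\partial_z-\partial_z^2)(\partial^\alpha\partial_z^j u)=\mathcal{R}^{\alpha}_j,$$
where $\mathcal{R}^{\alpha}_j$ collects the Leibniz commutator and source terms. Thanks to the identity above, every summand of $\mathcal{R}^{\alpha}_j$ is a product of derivatives of $(u,v)$ of combined order at most $|\alpha|+j+1$ with each individual factor already covered by the Gevrey norm $|\vec a|_{\rho,\sigma}$. Taking the scalar product with $\langle z\rangle^{2(\ell+j)}\partial^\alpha\partial_z^j u$ and integrating by parts in $z$ produces the dissipative term $\int_0^t\|\partial_z[\langle z\rangle^{\ell+j}\partial^\alpha\partial_z^j u]\|_{L^2}^2\,ds$ together with harmless weight-derivative pieces $-2(\partial_z\langle z\rangle^{\ell+j})\partial_z(\cdots)$ absorbed by Young's inequality. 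The boundary contribution at $z=0$ is handled case by case: for $j=1$ one has $\partial^\alpha\partial_z^2 u|_{z=0}=0$ by evaluating the original equation at the wall (using $u=v=w=0$ there); for $j=2,4$ the compatibility conditions \eqref{comcon} give the required vanishing of $\partial^\alpha\partial_z^j u|_{z=0}$ up to nonlinear expressions in lower-order boundary traces, which are controlled by $C_*$ via Assumption \ref{assmain}; for $j=3,5$ one reduces to the previous cases by applying a further tangential differentiation of the equation restricted to $z=0$.

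The bulk of the work is then to bound $\mathcal{R}^{\alpha}_j$ in $L^2(\langle z\rangle^{2(\ell+j)}\,dxdydz)$, exactly in the style of Lemma \ref{lemmac}: split each combinatorial sum at the midpoint $|\alpha|/2$, place the $L^\infty$ norm from \eqref{soblev} on the low-order factor and $L^2$ on the high-order one, and invoke the Gevrey bounds \eqref{emix}, \eqref{eofw}, together with Assumption \ref{assmain}. Each product is estimated by $C(|\vec a|_{\rho,\sigma}+|\vec a|_{\rho,\sigma}^2)[(|\alpha|+j-7)!]^\sigma/\rho^{|\alpha|+j-7}$, and squaring yields the claimed $|\vec a|_{\rho,\sigma}^2+|\vec a|_{\rho,\sigma}^4$ bound after multiplication by $\rho^{2(|\alpha|+j-7)}/[(|\alpha|+j-7)!]^{2\sigma}$. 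The low-regularity regime $|\alpha|+j\leq 6$ is immediate from \eqref{condi1}, and the initial datum contributes $C_1\|(u_0,v_0)\|_{2\rho_0,\sigma}^2$ since $\langle z\rangle^{\ell+j}\partial^\alpha\partial_z^j u|_{t=0}\in X_{2\rho_0,\sigma}$. The main technical obstacle is the uniform-in-$j$ treatment of the boundary traces at $z=0$ via \eqref{comcon}; once that is in place, the remaining computations are routine bookkeeping along the lines of Section \ref{sec5}.
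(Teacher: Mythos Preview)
Your overall strategy matches the paper's: weighted energy estimate on $\langle z\rangle^{\ell+j}\partial^\alpha\partial_z^j u$, bulk commutators handled by Leibniz plus Gevrey counting, boundary contributions at $z=0$ treated via the compatibility structure. One point to sharpen: the absence of the $\mathcal U$-cancellation is not because $(\partial^\alpha w)\partial_z u$ has disappeared. The commutator still produces $(\partial^\alpha w)\partial_z^{j+1}u$ with no $z$-derivative on $w$; it is harmless only because the target factorial is $[(|\alpha|+j-7)!]^\sigma$ while $\partial^\alpha w$ costs $[(|\alpha|-6)!]^\sigma$, and $j\ge1$ gives $|\alpha|-6\le|\alpha|+j-7$. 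In other words, the extra normal derivative in the norm buys one extra tangential derivative on $w$.

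The genuine gap is your handling of the boundary flux at $j=4$. The term is
\[
\int_{\mathbb R^2}(\partial_x^m\partial_z^4 u)(\partial_x^m\partial_z^5 u)\big|_{z=0}\,dxdy.
\]
You correctly invoke the nonlinear relation for $\partial_z^4 u|_{z=0}$, which gives $\|\partial_x^m\partial_z^4 u|_{z=0}\|_{L^2_{x,y}}\le C[(m-4)!]^\sigma\rho^{-(m-4)}|\vec a|_{\rho,\sigma}^2$. But you do not say how to control the other factor $\|\partial_x^m\partial_z^5 u|_{z=0}\|_{L^2_{x,y}}$: there is no compatibility relation for $\partial_z^5 u|_{z=0}$, and the trace estimate drags in $\|\partial_x^m\partial_z^6 u\|_{L^2}$, which is neither in the Gevrey norm (only $j\le5$ normal derivatives are tracked) nor covered by \eqref{condi1}. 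The paper resolves this by first establishing the case $j=5$, whose dissipation yields control of $\int_0^t\|\partial_x^m\partial_z^6 u(s)\|_{L^2}^2\,ds$ at the correct factorial level, and then feeding this into the $j=4$ estimate via a weighted Young inequality
\[
ab\le \frac{\rho^2}{m^{2\sigma}}a^2+\frac{m^{2\sigma}}{\rho^2}b^2
\]
that shifts the factorial weights appropriately. Your case split also has the parity reversed: in the paper $j=1,2$ are trivial since $\partial_z^2 u|_{z=0}=0$; for $j=3,5$ one uses the nonlinear relations for the \emph{even} traces $\partial_z^{j+1}u|_{z=0}$ (namely $\partial_z^4 u$ and $\partial_z^6 u$) and absorbs the remaining factor into the dissipation $\|\partial_x^m\partial_z^{j+1}u\|_{L^2}^2$; and $j=4$ is the residual case requiring the $j=5$ result.
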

 
 \begin{proof}  The upper bound for   $ \norm{\comi z^{\ell+j}\partial^\alpha \partial_z^ju (t)}_{L^2}$ with   $\abs\alpha +j \leq 6  $   and $ 1\leq j\leq 5$ is straightforward.  So we only need to consider the case of $\abs\alpha +j \geq 7  $  with $ 1\leq j\leq 5$. As before it suffices to estimate $ \comi z^{\ell+j}\partial_x^m \partial_z^ju$ since $ \comi z^{\ell+j}\partial_y^m \partial_z^ju$ can be treated in the same way.  
    
    We apply $\comi z^{\ell+j}\partial_z^j$ to equation \eqref{zu} to get 
 \begin{multline*}
\big(\partial_t+u\partial_x+v\partial_y+w\partial_z-\partial_z^2\big)  \comi z^{\ell+j}  \partial_x^m  \partial_z^j u\\
 = -  \comi z^{\ell+j} \partial_z^j\big[ (\partial_x^mw)\partial_z u \big]+ \comi z^{\ell+j} \partial_z^j F_{m}+\big[u\partial_x+v\partial_y+w\partial_z-\partial_z^2,  \ \comi z^{\ell+j}\partial_z^j \big]  \partial_x^m u,
 \end{multline*}
 where
  $F_m$ is defined in \eqref{zu} and $[T_1,T_2]=T_1T_2-T_2T_1$ stands for the commutator of two operators $T_1, T_2$. Thus,
 \begin{equation}\label{em}
 \begin{aligned}
 	&\frac{1}{2}\frac{d}{dt}\norm{\comi z^{\ell+j}  \partial_x^m  \partial_z^j u}_{L^2}^2+\big\|\partial_z\big(\comi z^{\ell+j}  \partial_x^m  \partial_z^j u\big)\big\|_{L^2}^2\\
 	&\qquad+\int_{\mathbb R^2} \big(  \partial_x^m  \partial_z^j u\big)   \big(\partial_x^m  \partial_z^{j+1} u\big)\big|_{z=0}  dxdy\\
& =\Big(\big(\partial_t+u\partial_x+v\partial_y+w\partial_z-\partial_z^2\big)  \comi z^{\ell+j}  \partial_x^m  \partial_z^j u,\ \comi z^{\ell+j}  \partial_x^m  \partial_z^j u\Big)_{L^2}\\
& =\Big(- \comi z^{\ell+j} \partial_z^j\big[ (\partial_x^mw)\partial_z u \big]+\comi z^{\ell+j} \partial_z^j F_{m},\ \comi z^{\ell+j}  \partial_x^m  \partial_z^j u\Big)_{L^2}\\
& \quad+\Big(   \big[u\partial_x+v\partial_y+w\partial_z-\partial_z^2,  \ \comi z^{\ell+j}\partial_z^j \big]  \partial_x^m u,\ \comi z^{\ell+j}  \partial_x^m  \partial_z^j u\Big)_{L^2},
\end{aligned}
 \end{equation}
 where we used the fact that $$\big(\comi z^{\ell+j}  \partial_x^m  \partial_z^j u\big)\partial_z\big(\comi z^{\ell+j}  \partial_x^m  \partial_z^j u\big)\big|_{z=0}=\big(  \partial_x^m  \partial_z^j u\big)   \big(\partial_x^m  \partial_z^{j+1} u\big)\big|_{z=0}.$$
As for the terms on the right side of   \eqref{em} we use the argument for proving Lemma \ref{lemmac} to get, recalling $F_m$ is given in \eqref{zu}, 
  \begin{multline*}
  	\Big( \comi z^{\ell+j} \partial_z^j F_{m} ,\ \comi z^{\ell+j}  \partial_x^m  \partial_z^j u\Big)_{L^2}\\
  	\leq \frac{1}{8}  \big\|\partial_z\big(\comi z^{\ell+j}  \partial_x^m  \partial_z^j u\big)\big\|_{L^2}^2+C\big\|\comi z^{\ell+j} \partial_z^{j-1} F_{m} \big\|_{L^2}^2+C\big\|\comi z^{\ell+j}  \partial_x^m  \partial_z^j u \big\|_{L^2}^2 \\
  	\leq  \frac{1}{8}  \big\|\partial_z\big(\comi z^{\ell+j}  \partial_x^m  \partial_z^j u\big)\big\|_{L^2}^2+ C \frac{    [(m+j-7)!]^{2\sigma} } {\rho^{2(m+j-7)}} \inner{\abs{\vec a}_{\rho,\sigma}^2+\abs{\vec a}_{\rho,\sigma}^4}.
  \end{multline*}
 Moreover, direct verification shows
   \begin{eqnarray*}
   \begin{aligned}
  &	\Big(- \comi z^{\ell+j}  \partial_z^j\big[ (\partial_x^mw)\partial_z u \big] +\big[u\partial_x+v\partial_y+w\partial_z-\partial_z^2,  \ \comi z^{\ell+j}\partial_z^j \big]  \partial_x^m u, \comi z^{\ell+j}  \partial_x^m  \partial_z^j u\Big)_{L^2}\\
& \leq 	 \frac{1}{8}  \big\|\partial_z\big(\comi z^{\ell+j}  \partial_x^m  \partial_z^j u\big)\big\|_{L^2}^2+ C \frac{    [(m+j-7)!]^{2\sigma} } {\rho^{2(m+j-7)}} \inner{\abs{\vec a}_{\rho,\sigma}^2+\abs{\vec a}_{\rho,\sigma}^4}. 
\end{aligned}
  \end{eqnarray*}
By   the two inequalities above we get   the upper bound for the terms on the right side of  \eqref{em}, that is, 
 \begin{multline*} 
\Big(- \comi z^{\ell+j} \partial_z^j\big[ (\partial_x^mw)\partial_z u \big]+\comi z^{\ell+j} \partial_z^j F_{m},\ \comi z^{\ell+j}  \partial_x^m  \partial_z^j u\Big)_{L^2}\\
 +\Big(  \big[u\partial_x+v\partial_y+w\partial_z-\partial_z^2,  \ \comi z^{\ell+j}\partial_z^j \big]  \partial_x^m u,\ \comi z^{\ell+j}  \partial_x^m  \partial_z^j u\Big)_{L^2}\\
 	\leq \frac{1}{4}\big\|\partial_z\big(\comi z^{\ell+j}  \partial_x^m  \partial_z^j u\big)\big\|_{L^2}^2+ C\frac{   [\inner{m+j-7}!]^{2\sigma}}{\rho^{2(m+j-7)}}   \big(\abs{\vec a}_{ \rho,\sigma}^2+   \abs{\vec a}_{ \rho,\sigma}^4 \big).
 \end{multline*}  
 This  with \eqref{em}  yields
 \begin{multline}\label{efei}
 \frac{1}{2}\frac{d}{dt}\norm{\comi z^{\ell+j}  \partial_x^m  \partial_z^j u}_{L^2}^2+\frac{3}{4}\big\|\partial_z\big(\comi z^{\ell+j}  \partial_x^m  \partial_z^j u\big)\big\|_{L^2}^2 \\
 \leq \Big|\int_{\mathbb R^2} \big(  \partial_x^m  \partial_z^j u\big)   \big(\partial_x^m  \partial_z^{j+1} u\big)\big|_{z=0}  dxdy\Big| + C\frac{   [\inner{m+j-7}!]^{2\sigma}}{\rho^{2(m+j-7)}}   \big(\abs{\vec a}_{ \rho,\sigma}^2+   \abs{\vec a}_{ \rho,\sigma}^4 \big).
 \end{multline}
Next we handle the first term on the right of \eqref{efei}.  We claim the following estimate holds for $j=1, 2,3,5,$
    \begin{multline}\label{eb}
 	\bigg|\int_{\mathbb R^2}  \big( \partial_x^m  \partial_z^j u\big)\big(   \partial_x^m  \partial_z^{j+1} u\big)\big|_{z=0}dxdy\bigg|\\ 
 	\leq \frac{1}{4}\big\|  \partial_x^m  \partial_z^{j+1} u \big\|_{L^2}^2+C \frac{    [\inner{m+j-7}!]^{2\sigma}}{\rho^{2(m+j-7)}}   \big(\abs{\vec a }_{ \rho,\sigma}^2+   \abs{\vec a }_{ \rho,\sigma}^4 \big).
 \end{multline} 
      Observe
 $(\partial_{z}^2u, \partial_{z}^2v)|_{z=0}=(0,0)$ which follows after taking trace for the equations in \eqref{prandtl}.  Moreover applying  
 $\partial_z^2$ to the first equation in \eqref{prandtl} gives
 \begin{eqnarray*}
\partial_t \partial_z^2 u+\partial_z^2\inner{u\partial_x u+v\partial_y u+w\partial_z u}-\partial_z^4 u=0
 \end{eqnarray*}
 and thus,
 \begin{multline}\label{bo4}
 	\partial_z^4 u|_{z=0}=\partial_z^2\inner{u\partial_x u+v\partial_y u+w\partial_z u}|_{z=0}\\
 	= (\partial_z u)\inner{\partial_x \partial_z u-\partial_y\partial_z v} |_{z=0}+2 (\partial_z v) \partial_y\partial_z u|_{z=0}.
 \end{multline}
 Moreover the above relationship, along with the equation
 \begin{eqnarray*}
 	\partial_t \partial_z^4 u+\partial_z^4\inner{u\partial_x u+v\partial_y u+w\partial_z u}-\partial_z^6 u=0,
 \end{eqnarray*}
 yields
  \begin{eqnarray*}
 	\partial_z^6 u|_{z=0}=- (\partial_z^3 u)\inner{\partial_x \partial_z u+\partial_y\partial_z v} |_{z=0}+4 (\partial_z u) \partial_x \partial_z^3 u|_{z=0}+4 (\partial_z v) \partial_y \partial_z^3 u|_{z=0}.
 \end{eqnarray*}
 Consequently we  apply the similar computation as in the proof of Lemma \ref{lemgm} to get,  using Sobolev inequality   and the estimate   \eqref{emix},   
 \begin{eqnarray*}
 \begin{aligned}
 	  \big\|\big(\partial_x^m\partial_z^6 u|_{z=0}\big)\big\|_{L^2_{x,y}}  	& \leq  C\sum_{0\leq j\leq 1}	\big\|\partial_z^j\partial_x^m\big[(\partial_z^3 u)\inner{\partial_x \partial_z u+\partial_y\partial_z v} \big] \big\|_{L^2 } \\
  &\qquad+C\sum_{0\leq j\leq 1}	\big\|\partial_z^j\partial_x^m\big[(\partial_z u) \partial_x \partial_z^3 u \big] \big\|_{L^2 }   \\
  &\qquad+C\sum_{0\leq j\leq 1}	\big\|\partial_z^j\partial_x^m\big[(\partial_z v) \partial_y \partial_z^3 u\big] \big\|_{L^2 }\\
&\leq   C\frac{    [\inner{m+5-7}!]^{\sigma}}{\rho^{ m+5-7}} \abs{\vec a}_{ \rho,\sigma}^2, 
\end{aligned}
 \end{eqnarray*}
 and similarly by \eqref{bo4},
 \begin{eqnarray}\label{ef4}
 	  \big\|\big(\partial_x^m\partial_z^4 u|_{z=0}\big)\big\|_{L^2_{x,y}}  	 \leq   C\frac{    [\inner{m+3-7}!]^{\sigma}}{\rho^{ m+3-7}} \abs{\vec a}_{ \rho,\sigma}^2. 
\end{eqnarray}
 Thus 
 \begin{eqnarray*}
 \begin{aligned}
 	 &\bigg|\int_{\mathbb R^2}  \big( \partial_x^m  \partial_z^5 u\big)\big(   \partial_x^m  \partial_z^6 u\big)\big|_{z=0}dxdy\bigg| \\
 		&\leq \frac{1}{4}\big\|\partial_z \partial_x^m  \partial_z^5 u\big)\big\|_{L^2}^2+C\big\| \partial_x^m  \partial_z^5 u\big)\big\|_{L^2}^2+C   \big\|\big(\partial_x^m\partial_z^6 u|_{z=0}\big)\big\|_{L^2_{x,y}}^2  \\
 &	 \leq \frac{1}{4}\big\|  \partial_x^m  \partial_z^6 u \big\|_{L^2}^2+ C\frac{    [\inner{m+5-7}!]^{2\sigma}}{\rho^{2(m+5-7)}}   \big(\abs{\vec a }_{ \rho,\sigma}^2+   \abs{\vec a }_{ \rho,\sigma}^4 \big)
 \end{aligned}
 \end{eqnarray*} 
 and
 \begin{multline*}
 	 \bigg|\int_{\mathbb R^2}  \big( \partial_x^m  \partial_z^3 u\big)\big(   \partial_x^m  \partial_z^4 u\big)\big|_{z=0}dxdy\bigg| \\
 		 	 \leq \frac{1}{4}\big\|  \partial_x^m  \partial_z^4 u \big\|_{L^2}^2+ C\frac{    [\inner{m+3-7}!]^{2\sigma}}{\rho^{2(m+3-7)}}   \big(\abs{\vec a }_{ \rho,\sigma}^2+   \abs{\vec a }_{ \rho,\sigma}^4 \big).
 \end{multline*} 
 This gives  the validity of  \eqref{eb}  for $j=3, 5$.   Note that \eqref{eb} obviously holds for $j=1, 2$ since $\partial_z^2 u|_{z=0}=0.$  Thus    \eqref{eb}  is valid for $j=1,2,3,5$,  and this   with \eqref{efei} yields 
 \begin{multline}\label{mxe}
   	\norm{\comi z^{\ell+j}  \partial_x^m  \partial_z^j u(t)}_{L^2}^2 +\int_0^t\big\|\partial_z\big(\comi z^{\ell+j}  \partial_x^m  \partial_z^j u(s)\big)\big\|_{L^2}^2ds\\
  \leq  C\frac{    [(m+j-7)!]^{2\sigma} }   {\rho^{2(m+j-7)}}\Big[\norm{(u_0,v_0)}_{2\rho_0,\sigma}^2   +  \int_0^{t}  \big(\abs{\vec a(s)}_{\rho,\sigma}^2+\abs{\vec a(s)}_{\rho,\sigma}^4 \big)ds\Big]
  \end{multline}
  for $j=1, 2,3, 5$.

 It remains to prove the validity of \eqref{mxe} for the case of $j=4$.    By Sobolev inequality, we compute  
 \begin{eqnarray*}
 \begin{aligned}
 	&\Big|\int_{\mathbb R^2} \big(  \partial_x^m  \partial_z^4 u\big)   \big(\partial_x^m  \partial_z^{5} u\big)\big|_{z=0}  dxdy\Big| \\
 &	\leq
 \frac{\rho^2}{m^{2\sigma}}  \big\|\big(\partial_x^m\partial_z^5 u|_{z=0}\big)\big\|_{L^2_{x,y}}^2+  \frac{m^{2\sigma}}{\rho^2}  \big\|\big(\partial_x^m\partial_z^4 u|_{z=0}\big)\big\|_{L^2_{x,y}}^2\\
 &\leq C \frac{\rho^2}{m^{2\sigma}}\Big(  \big\| \partial_x^m\partial_z^5 u \big\|_{L^2}^2+ \big\| \partial_x^m\partial_z^6 u \big\|_{L^2}^2\Big)+  C\frac{    [\inner{m+4-7}!]^{2\sigma}}{\rho^{2( m+4-7)}} \abs{\vec a}_{ \rho,\sigma}^4, \end{aligned}
 \end{eqnarray*} 
 the last inequality using \eqref{ef4}.  This, with \eqref{efei} for $j=4$, yields
 \begin{eqnarray*}
 \begin{aligned}
 &	\norm{\comi z^{\ell+4}  \partial_x^m  \partial_z^4 u(t)}_{L^2}^2+\frac{3}{4}\int_0^t \big\|\partial_z\big(\comi z^{\ell+4}  \partial_x^m  \partial_z^4 u(s)\big)\big\|_{L^2}^2 dt\\
 &\leq\frac{   [\inner{m+4-7}!]^{2\sigma}}{\rho^{2(m+4-7)}}\norm{u_0,v_0}_{2\rho_0,\sigma}^2+ C\frac{\rho^2}{m^{2\sigma}}\int_0^t\Big(  \big\| \partial_x^m\partial_z^5 u(s) \big\|_{L^2}^2+ \big\| \partial_x^m\partial_z^6 u(s) \big\|_{L^2}^2\Big)ds\\
 &\quad+C\frac{   [\inner{m+4-7}!]^{2\sigma}}{\rho^{2(m+4-7)}}\int_0^t   \big(\abs{\vec a(s)}_{ \rho,\sigma}^2+   \abs{\vec a(s)}_{ \rho,\sigma}^4 \big) ds.
 	\end{aligned}
 \end{eqnarray*}
 Moreover   we use \eqref{emix}  to get 
 \begin{eqnarray*}
 	 \frac{\rho^2}{m^{2\sigma}}\int_0^t   \big\| \partial_x^m\partial_z^5 u(s) \big\|_{L^2}^2 ds  \leq C\frac{   [\inner{m+4-7}!]^{2\sigma}}{\rho^{2(m+4-7)}}\int_0^t \abs{\vec a(s)}_{ \rho,\sigma}^2  ds,
 \end{eqnarray*}
 and meanwhile observe that  we have proven   \eqref{mxe} holds for $j=5$ and this implies 
 \begin{eqnarray*}
 \begin{aligned}
 &	\frac{\rho^2}{m^{2\sigma}}\int_0^t  \big\| \partial_x^m\partial_z^6 u(s) \big\|_{L^2}^2 ds\\
 &	\leq 2\frac{\rho^2}{m^{2\sigma}} \int_0^t\big\|\partial_z\big(\comi z^{\ell+5}  \partial_x^m  \partial_z^5 u(s)\big)\big\|_{L^2}^2ds+C\frac{\rho^2}{m^{2\sigma}} \int_0^t\big\| \comi z^{\ell+5}  \partial_x^m  \partial_z^5 u(s) \big\|_{L^2}^2ds\\
 &	\leq  2 \frac{    [(m+4-7)!]^{2\sigma} }   {\rho^{2(m+4-7)}}\norm{(u_0,v_0)}_{2\rho_0,\sigma}^2   \\
 &\quad+C \frac{    [(m+4-7)!]^{2\sigma} }   {\rho^{2(m+4-7)}}   \int_0^{t}  \big(\abs{\vec a(s)}_{\rho,\sigma}^2+\abs{\vec a(s)}_{\rho,\sigma}^4 \big)ds
 		.
 		\end{aligned}
 \end{eqnarray*} 
 Combining the above inequalities we obtain the validity of \eqref{mxe} for $j=4$.   
  The proof of Lemma \ref{lemnor} is thus completed.
 \end{proof}
 
\section{Estimate on   $\partial^\alpha\lambda, \partial^\alpha \delta$ and $\partial^\alpha \tilde\lambda, \partial^\alpha \tilde\delta$}
  \label{seclamdelta}

  Recall  $\lambda, \delta$ and $\tilde\lambda, \tilde\delta$ are the functions given by \eqref{laga},  and 
    this section is devoted to  treating the  terms involving these functions  in the representation of $\abs{\vec a}_{\rho,\sigma}$ (see Definition \ref{gevspace}).    
   
 \begin{proposition}\label{propum} Under Assumption \ref{assmain} we have, 
 	  for any  $t\in[0,T]$ and   for  any pair $\inner{\rho,\tilde\rho}$ with  $0<\rho<\tilde\rho< \rho_0\leq 1$, 
\begin{multline*}
  \sup_{\abs\alpha\geq 6} \frac{\rho^{2( \abs\alpha-6)}}{ [(\abs\alpha-6)!]^{2\sigma}}\Big(\abs\alpha^2\norm{ \partial^\alpha \lambda(t)}_{L^2}^2+\abs\alpha^2\norm{ \partial^\alpha \delta(t)}_{L^2}^2 \Big)  \\  
  +\sup_{\abs\alpha\leq 5}  \Big(\abs\alpha^2\norm{\partial^\alpha \lambda(t)}_{L^2}^2+\abs\alpha^2\norm{ \partial^\alpha \delta(t)}_{L^2}^2 \Big) \\
   \leq C_1\norm{(u_0, v_0)}_{2\rho_0,\sigma}^2+ e^{CC_*^2} \bigg(    \int_0^{t}   \big( \abs{\vec a(s)}_{ \rho,\sigma}^2+\abs{\vec a(s)}_{ \rho,\sigma}^4\big)   \,ds+     \int_0^{t}  \frac{  \abs{\vec a(s)}_{ \tilde\rho,\sigma}^2}{\tilde\rho-\rho}\,ds\bigg), 
\end{multline*}
where $C_*\geq 1$ is the constant given in \eqref{condi1}. 
Similarly the above estimate still holds with $\partial^\alpha \lambda$ and $\partial^\alpha \delta$ replaced  respectively by  $\partial^\alpha \tilde\lambda$ and $\partial^\alpha \tilde\delta$. 
 \end{proposition}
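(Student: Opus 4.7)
The plan is to run an energy estimate on equation \eqref{edif} for $\lambda$, together with its three analogues for $\delta,\tilde\lambda,\tilde\delta$, exploiting the key feature stressed in Subsection \ref{subaux}: the cancellation built into $\lambda$ removes the bad term $(\partial_x w)\partial_z u$, so the equation \emph{no longer loses tangential derivatives}. As a first step, I would derive the $\delta$-equation by mimicking the derivation of \eqref{edif}: multiply \eqref{mau} by $\partial_z v$ and subtract from $\partial_x$ of the second equation in \eqref{prandtl} to kill $(\partial_x w)\partial_z v$, obtaining an equation of the same structure as \eqref{edif} with $u$ in the source replaced by $v$ in appropriate slots. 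The equations for $\tilde\lambda,\tilde\delta$ are analogous with $\widetilde{\mathcal U}$ and $\partial_y$ playing the roles of $\mathcal U$ and $\partial_x$.

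Next, I would apply $\partial_x^m$ to \eqref{edif}, take the $L^2$ inner product with $\partial_x^m\lambda$, and integrate in time. Since $u|_{z=0}=0$ and $(\partial_z u)\int_0^z\mathcal U d\tilde z|_{z=0}=0$, we have $\lambda|_{z=0}=0$, so integration by parts in $z$ produces the clean dissipation $\int_0^t\norm{\partial_z\partial_x^m\lambda}_{L^2}^2 ds$ with no boundary contribution. The right-hand side then splits into Leibniz commutators with $u\partial_x+v\partial_y+w\partial_z$, and $\partial_x^m$ of the four source terms on the right of \eqref{edif}. After multiplying by $m^2$ to install the $\abs\alpha^2$ factor from Definition \ref{gevspace}, I would bound each term by $\frac{[(m-6)!]^{2\sigma}}{\rho^{2(m-6)}}$ times a suitable polynomial in $\abs{\vec a}_{\tilde\rho,\sigma}$, following precisely the bookkeeping of Lemmas \ref{lemmac}, \ref{lemgm} and \ref{lemtan}: split every sum at $j=[m/2]$, use the Sobolev embedding \eqref{soblev}, and invoke the basic estimates \eqref{elam}, \eqref{uint+}, \eqref{emix}, \eqref{eofw} together with the combinatorial inequality \eqref{factor}. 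The $w\partial_z$ commutator and the source term $2(\partial_z^2u)\mathcal U$ contribute pieces with $\partial_z\partial_x^m\lambda$ that are absorbed by $\varepsilon$-Young into the dissipation on the left; the crucial point is that the extra factor $\abs\alpha$ in the $\lambda,\delta$-norms is exactly what offsets the $1/2$-order mismatch between $\mathcal U$ and its source $\partial_x\lambda+\partial_y\delta$, so the estimates close without further shrinkage of $\rho$ in the $\lambda$-pieces themselves.

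To produce the $e^{CC_*^2}$ prefactor, the low-regularity parts of several commutator terms (for instance $j=m$ in $\sum\binom{m}{j}(\partial_x^j u)\partial_x^{m-j+1}\lambda$, and the $(\partial_y v)\partial_z u$-type source term) admit an estimate where one factor is controlled uniformly by $C_*$ via Assumption \ref{assmain} and the other is $m\norm{\partial_x^m\lambda(s)}_{L^2}$ itself. Gathering these into a Gronwall inequality of the form $X_m(t)\leq X_m(0)+CC_*^2\int_0^t X_m(s)\,ds+(\text{good terms})$ with $T\leq 1$ converts $CC_*^2$ into the multiplier $e^{CC_*^2}$. After this, \eqref{realp} upgrades $\partial_x^m$ to general $\partial^\alpha$, the symmetric argument using $\widetilde{\mathcal U}$ handles $\tilde\lambda,\tilde\delta$, and the cases $\abs\alpha\leq 5$ follow directly from Assumption \ref{assmain} combined with Proposition \ref{prolambda}. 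The main technical obstacle will be the top commutator $(\partial_x^m w)\partial_z\lambda$: since $w$ loses one tangential derivative through its defining integral, this term is genuinely of order $m+1$, and it is closed only by using that $\partial_z\lambda$ carries no tangential derivative (so it is uniformly controlled via $C_*$ together with low-order $\mathcal U$-bounds), and that the $\abs\alpha$ factor in Definition \ref{gevspace} conspires with \eqref{factor} to produce the $1/(\tilde\rho-\rho)$ structure on the right-hand side, exactly as in the toy-model calculation of Subsection \ref{subaux}.
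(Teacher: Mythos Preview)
Your overall architecture is right—the energy estimate on \eqref{edif}, the boundary condition $\lambda|_{z=0}=0$, the Leibniz split, and the passage to general $\partial^\alpha$ via \eqref{realp} all match the paper. Two points, however, are not correctly identified.

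First, the origin of the factor $e^{CC_*^2}$ is not a Gronwall inequality on $X_m=m^2\norm{\partial_x^m\lambda}_{L^2}^2$. Several of the terms you must bound involve \emph{low-order} quantities built from $\mathcal U$—for instance $\norm{\partial_z\lambda}_{L^\infty}$, $\norm{\partial_z\mathcal U}_{L^\infty}$, or $\big\|\comi z^{-\ell}\int_0^z\partial^\beta\mathcal U\big\|_{L^2}$ with $|\beta|$ small. These are \emph{not} controlled by Assumption~\ref{assmain} (which concerns only $u,v$) and are not part of $X_m$. The paper isolates this in a separate lemma (Lemma~\ref{lemauxi}): one runs a standard finite-order energy estimate on the equations \eqref{eqoff} and \eqref{eqfou} and applies Gronwall there; the $e^{CC_*^2}$ is produced at that stage and then fed into the main estimate as a fixed constant.

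Second, and more seriously, your treatment of the source term $2(\partial_z^2u)\mathcal U$ has a genuine gap. After Leibniz, the pieces with $j$ near $m$—in particular $(\partial_x^m\partial_z^2u)\mathcal U$—are handled by integrating by parts in $z$, which gives a term paired with $\partial_z\partial_x^m\lambda$. If you absorb this by Young as you propose, the residual is $Cm^2\int_0^t\norm{\mathcal U\,\partial_x^m\partial_z u}_{L^2}^2ds$. A naive bound $\norm{\mathcal U}_{L^\infty}\norm{\partial_x^m\partial_z u}_{L^2}$ together with \eqref{emix} leaves an extra factor $m^2$, and \eqref{factor} can only convert one power of $m$ into $(\tilde\rho-\rho)^{-1}$; you end up with $(\tilde\rho-\rho)^{-2}$, which does not close. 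The paper resolves this by a dedicated claim (see \eqref{clam} and \eqref{mp}): one goes back to the equation \eqref{eqp} for $\psi_m=\partial_x^m u-(\partial_zu)\int_0^z\partial_x^{m-1}\mathcal U\,d\tilde z$, multiplies by $\mathcal U$, and runs an energy estimate on $\mathcal U\psi_m$. The time-integrated dissipation $\int_0^t\norm{\partial_z(\mathcal U\psi_m)}_{L^2}^2$ then controls $\int_0^t\norm{\mathcal U\,\partial_x^m\partial_z u}_{L^2}^2$ at the $(m-7)!$ scale rather than $(m-6)!$, and the resulting gain of one factorial exactly cancels the extra $m^2$. This step is the heart of the proof and is missing from your plan.
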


To prove the above proposition we first derive the eqaution solved by  $ \lambda$.   Note that 
  \begin{eqnarray*}
	  \lambda= \partial_x u-(\partial_z u)\int_0^z\mathcal U d\tilde z=\psi_1,
\end{eqnarray*} 
with $\psi_1$ defined by \eqref{plam}.  Then using \eqref{eqp} for $m=1$ we obtain 
  the equation for $\lambda$: 
\begin{multline*}  
\big(\partial_t+u\partial_x+v\partial_y+w\partial_z-\partial_z^2 \big)  \lambda\\
 = -(\partial_xu)^2-(\partial_xv)\partial_yu - \Big[(\partial_y v)\partial_zu- ( \partial_yu)\partial_zv\Big]  \int_0^z\mathcal U d\tilde z+2(\partial_z^2 u)\mathcal U.
\end{multline*}
Now for any $m\geq 6$ we apply $\partial_x^m$ to  the above equation; this gives
\begin{eqnarray*}\label{lam1}
\begin{aligned}
&	\big(\partial_t+u\partial_x+v\partial_y+w\partial_z-\partial_z^2 \big)  \partial_x^m \lambda\\
	& \quad= - \sum_{j=1}^m{m\choose j} \Big[(\partial_x^j u)\partial_x^{m-j+1}\lambda+ (\partial_x^j v)\partial_x^{m-j}\partial_y\lambda+ (\partial_x^j w)\partial_x^{m-j}\partial_z\lambda\Big]\\
	&\qquad\ \ - \partial_x^m\Big[(\partial_xu)^2+(\partial_xv)\partial_yu + \Big[(\partial_y v)\partial_zu- ( \partial_yu)\partial_zv\Big]  \int_0^z\mathcal U d\tilde z-2(\partial_z^2 u)\mathcal U\Big].
	\end{aligned}
\end{eqnarray*}
Thus  taking the scalar product   with $m^2\partial_x^m\lambda$ and observing $\lambda|_{z=0}=0$ and $\lambda|_{t=0}=\partial_xu_0$ we have  
 \begin{equation}\label{k1k2}
 \begin{aligned}
 	&\frac{m^2}{2} \Big(\norm{ \partial_x^m\lambda(t)}_{L^2}^2- \norm{ \partial_x^{m+1}u_0}_{L^2}^2\Big)+m^2 \int_0^{t}	\big\| \partial_z  \partial_x^m\lambda(s)\big\|_{L^2}^2ds  \\
 	 &=  m^2 \int_0^{t}	\inner{\big(\partial_s+u\partial_x+v\partial_y+w\partial_z-\partial_z^2 \big)  \partial_x^m\lambda, \    \partial_x^m\lambda}_{L^2} ds\\
 	 &=   K_1+ K_2+K_3, 
 	 \end{aligned}
 \end{equation}
 where
 \begin{eqnarray*} 
 \begin{aligned}
 K_1=&   - 	m^2\int_0^{t}\Big( \sum_{j=1}^m{m\choose j} \Big[(\partial_x^j u)\partial_x^{m-j+1}\lambda+ (\partial_x^j v)\partial_x^{m-j}\partial_y\lambda \Big], \   \partial_x^m\lambda \Big)_{L^2}ds\\
  & - 	m^2\int_0^{t}\Big( \sum_{j=1}^m{m\choose j}   (\partial_x^j w)\partial_x^{m-j}\partial_z\lambda, \   \partial_x^m\lambda \Big)_{L^2}ds,\\
  K_2	=&- 	m^2\int_0^{t}\big(  \partial_x^m\big[(\partial_xu)^2 +(\partial_xv)\partial_yu + \big[(\partial_y v)\partial_zu- ( \partial_yu)\partial_zv\big]  \int_0^z\mathcal U d\tilde z \big],    \partial_x^m\lambda \big)_{L^2}ds,\\
K_3=&  2m^2\int_0^{t}\Big( \partial_x^m\big[ (\partial_z^2 u)\mathcal U\big], \    \partial_x^m\lambda \Big)_{L^2}ds.
 \end{aligned}
 \end{eqnarray*}
  To estimate the above $K_j, 1\leq j\leq 3,$ we need     the upper bounds of  $\int_0^z \mathcal U dz$ and $\mathcal U$  similar  as  that in \eqref{condi1}, which is stated in the following 
   	
   	\begin{lemma}\label{lemauxi}
   	Under the condition \eqref{condi1} we have, denoting $\partial^\beta=\partial_x^{\beta_1}\partial_y^{\beta_2}$ and recalling $T\leq 1$,   		
   	\begin{eqnarray*}
	\forall\ t\in[0,T],\quad \sum_{\abs\beta\leq 9}\big\|	\comi z^{-\ell}  \int_0^z\partial^{\beta}\mathcal U(t) dz \big\|_{L^2} +\sum_{\stackrel{\abs\beta+j\leq 8}{0\leq j\leq 2}}\big\|	\partial^{\beta}\partial_z^j \mathcal  U(t)   \big\|_{L^2} \leq e^{CC_*^2}, 
\end{eqnarray*}
and
	\begin{eqnarray*}
	\forall\ t\in[0,T],\quad  \sum_{\stackrel{\abs\beta+j\leq 8}{0\leq j\leq 2}}\norm{\partial^\beta\partial_z^j\lambda}_{L^2}   \leq e^{CC_*^2}, 
\end{eqnarray*}
 where $C_*\geq 1$ is the constant in  \eqref{condi1}, and      $C$ is  a constant  depending only on    the Sobolev embedding  constants and the numbers $\rho_0, \sigma, \ell$   given in Definition \ref{defgev}. 
   	\end{lemma}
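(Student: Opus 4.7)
The estimate is a standard low-regularity parabolic energy bound in which every derivative count is comfortably below the $|\alpha|+j\leq 10$ budget of \eqref{condi1}. No Gevrey machinery is needed; the exponential factor $e^{CC_*^2}$ is the outcome of a single Gronwall step applied to nonlinear energy inequalities whose coefficients are $C_*$-bounded. The three bounds are intertwined: I will first estimate the integrated quantity $f:=\int_0^z\mathcal U d\tilde z$ via the transport-diffusion equation \eqref{eqoff}, then use $\partial_zf=\mathcal U$ together with \eqref{eqfou} to get the normal-derivative bounds on $\mathcal U$, and finally read off the bound on $\lambda$ algebraically from its definition \eqref{laga}.

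For each multi-index $|\beta|\leq 9$ I would apply $\partial^\beta$ to \eqref{eqoff} and pair the result with $\comi z^{-2\ell}\partial^\beta f$ in $L^2(\mathbb R^3_+)$, noting $\partial^\beta f|_{z=0}=0$. The diffusion produces the good term $\|\comi z^{-\ell}\partial_z\partial^\beta f\|_{L^2}^2$ plus a weight-commutator that is controlled by the energy itself since $\ell>1/2$. The transport terms close by the divergence-free identity $\partial_xu+\partial_yv+\partial_zw=0$ together with integration by parts, using that $w/z\in L^\infty$ (a consequence of $w|_{z=0}=0$ and the $C_*$-boundedness of $\partial_zw=-(\partial_xu+\partial_yv)$). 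Every commutator $[\partial^\beta,u\partial_x+v\partial_y+w\partial_z]f$ expands by Leibniz into products of derivatives of $(u,v,w)$ of tangential order at most $|\beta|+1\leq 10$ (hence $C_*$-bounded via \eqref{condi1} and the Sobolev embeddings \eqref{soblev}) against derivatives of $f$ of tangential order strictly less than $|\beta|$, handled by induction. The delicate piece is the source $\partial^\beta(-\partial_xw)=\int_0^z\partial^\beta\partial_x(\partial_xu+\partial_yv)d\tilde z$: at the top level $|\beta|=9$ a naive bound would require eleven tangential derivatives of $(u,v)$. This is overcome by integrating by parts in the tangential variable (writing $\partial^\beta=\partial_x\partial^{\beta-e_1}$ when $\beta_1\geq 1$, or $\partial_y$ otherwise), transferring the extra derivative off the source and onto $\comi z^{-2\ell}\partial^\beta f$; after swapping the tangential derivative with the $z$-integration one is left pairing $\partial^{\beta-e_1}(\partial_xu+\partial_yv)$, which carries only ten tangential derivatives and is therefore $C_*$-bounded, against $\int_0^z\comi{\tilde z}^{-2\ell}\partial^\beta f d\tilde z$, which a weighted Hardy inequality (valid because $\ell>1/2$) controls by $\|\comi z^{-\ell}\partial^\beta f\|_{L^2}$ up to a contribution absorbed into the dissipation. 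Gronwall in $t$ then yields the asserted bound on each $\|\comi z^{-\ell}\partial^\beta f(t)\|_{L^2}$ for $|\beta|\leq 9$.

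The mixed-normal bounds on $\mathcal U$ follow quickly. For $j=0,1$ one uses $\mathcal U=\partial_zf$ together with the tangential estimate on $f$ at level $|\beta|+1\leq 9$ and the space-time dissipation extracted above. For $j=2$ I would rearrange \eqref{eqfou} to solve algebraically for $\partial_z^2\mathcal U$, expressing it as $\partial_t\mathcal U$ plus transport terms plus a source $\partial_x^2u+\partial_y\partial_xv-(\partial_zu)\partial_xf-(\partial_zv)\partial_yf+(\partial_xu+\partial_yv)\mathcal U$; every factor lies within the budget $|\beta|+j\leq 8$ and is controlled by \eqref{condi1} and the previous step. For $\lambda$ the bound is purely algebraic from $\lambda=\partial_xu-(\partial_zu)f$: Leibniz expansion of $\partial^\beta\partial_z^j\lambda$ produces a finite sum whose terms are each a derivative of $u$ (bounded by $C_*$, since the derivative count never exceeds $|\beta|+j+1\leq 9$) times a derivative of $f$ (bounded by the first step), with Sobolev embedding \eqref{soblev} handling the $L^\infty$-factor. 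The principal obstacle throughout is the top-order source bookkeeping in the energy estimate for $\partial^9f$ described above; once that integration-by-parts argument is in place, all remaining steps are routine.
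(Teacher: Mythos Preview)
Your plan for the weighted energy on $f=\int_0^z\mathcal U\,d\tilde z$ and for the final algebraic bound on $\lambda$ via \eqref{laga} tracks the paper's argument. The genuine gap is in how you propose to obtain the pointwise-in-time bounds on $\partial^\beta\partial_z^j\mathcal U$ for $0\le j\le 2$. For $j=0,1$ you appeal to $\mathcal U=\partial_zf$ together with the dissipation $\int_0^t\big\|\partial_z(\comi z^{-\ell}\partial^\beta f)\big\|_{L^2}^2\,ds$ extracted from the $f$-energy; but that integral controls $\partial^\beta\mathcal U$ only in $L^2_t$, not in $L^\infty_t$ as the lemma asserts, and moreover it carries the decaying weight $\comi z^{-\ell}$, whereas the lemma requires the unweighted $L^2$-norm of $\partial^\beta\mathcal U$. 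For $j=2$ you rearrange \eqref{eqfou} to isolate $\partial_z^2\mathcal U$; the resulting expression contains $\partial_t\mathcal U$, which none of your previous steps bounds pointwise in $t$, and substituting the equation for $\partial_t\mathcal U$ reintroduces $\partial_z^2\mathcal U$, so this is circular.

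The paper avoids both problems by running a second, independent energy estimate directly on the evolution equation \eqref{eqfou} for $\mathcal U$: applying $\partial^\beta\partial_z^j$ with $|\beta|+j\le 8$, $0\le j\le 2$, and pairing with $\partial^\beta\partial_z^j\mathcal U$ yields a differential inequality whose right-hand side is coupled to the $f$-energy through the source terms $(\partial_zu)\partial_x f$ and $(\partial_zv)\partial_y f$ (this coupling is exactly why the $f$-sum is pushed to $|\beta|\le 9$). A single Gronwall on the combined $(f,\mathcal U)$ system then delivers the required $\sup_t$ bounds, after which the $\lambda$-estimate follows algebraically as you describe. Your concern about the top-order source $\partial^\beta(-\partial_x w)$ at $|\beta|=9$ is a legitimate bookkeeping point that the paper does not spell out; however, your proposed fix is not transparent either --- after integrating by parts in the tangential variable you face $\partial^{\beta+e_1}f$ at level $10$, and the ``swap with the $z$-integration plus Hardy'' step you sketch does not visibly reduce this back to level $9$.
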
 
    	
   	\begin{proof}
   	This just follows from  direct computation.   Precisely we use standard energy method for the equation \eqref{eqoff} solved by $f= \int_0^z \mathcal U dz$,  applying $   \comi z^{-\ell}\partial^\beta=\comi z^{-\ell}\partial_{x}^{\beta_1}\partial_{y}^{\beta_2},\abs\beta\leq 9,$ to the equation \eqref{eqoff}  and then taking the scalar product with $\comi z^{-\ell}\partial^\beta f$; this with Sobolev inequality  \eqref{soblev} and the condition \eqref{condi1} gives    
 \begin{multline*}
{1\over 2} \frac{d}{dt}\sum_{\abs\beta\leq 9}\big\|\comi z^{-\ell}\partial^\beta f\big\|_{L^2}^2  	+ \sum_{\abs\beta\leq 9}\big\|\partial_z\big(\comi z^{-\ell}\partial^\beta f\big)\big\|_{L^2}^2\\
  \leq  C  C_*^2    \sum_{\abs\beta\leq 9}\big\|\comi z^{-\ell}\partial^\beta f\big\|_{L^2}^2  +CC_*\sum_{\abs\beta\leq 9}\big\|\comi z^{-\ell}\partial^\beta f\big\|_{L^2},
\end{multline*}
where $C_*\geq 1$ is just the constant given in \eqref{condi1}.  Moreover 
we apply again the energy method for the equation \eqref{eqfou} solved by $\mathcal U$,   to obtain
\begin{multline*}
	{1\over 2} \frac{d}{dt}\sum_{\stackrel{\abs\beta+j\leq 8}{0\leq j\leq 2}}\big\|	  \partial^{\beta}\partial_z^j\mathcal U   \big\|_{L^2} ^2  	+ \sum_{\stackrel{\abs\beta+j\leq 8}{0\leq j\leq 2}}\big\|\partial_z\partial^{\beta}\partial_z^j\mathcal U\big\|_{L^2}^2 \\
	 \leq  C  C_*^2    \sum_{\stackrel{\abs\beta+j\leq 8}{0\leq j\leq 2}}\big\|	  \partial^{\beta}\partial_z^j\mathcal U   \big\|_{L^2} ^2+C  C_*    \sum_{\stackrel{\abs\beta+j\leq 8}{0\leq j\leq 2}}\big\|	  \partial^{\beta}\partial_z^j\mathcal U   \big\|_{L^2}    +C C_*\sum_{\abs\beta\leq 9}\big\|\comi z^{-\ell}\partial^\beta f\big\|_{L^2}^2.
\end{multline*}
 As a result using  Gronwall inequality we obtain  the first estimate as desired in Lemma \ref{lemauxi},  which with the representation of $\lambda$ given in  \eqref{laga} as well as \eqref{condi1},  yields the second one.   The proof of Lemma \ref{lemauxi} is completed. 
\end{proof}
 
Now we continue the proof of Proposition \ref{propum}.  Recall $K_1$ is given in  \eqref{k1k2}.   By the second estimate in Lemma \ref{lemauxi} we can apply similar argument for proving Lemma \ref{lemmac} to compute,  using \eqref{elam} here instead of \eqref{uint+} and observing  there is a factor $m$ before   $\norm{\partial_x^m\lambda}_{L^2}$ in \eqref{elam},   
 \begin{multline}\label{ek1}
 	K_1\leq {m^2\over 4} \int_0^{t} \norm{\partial_z\partial_x^{m} \lambda(s)}_{L^2}^2 ds \\
 	+  C\frac{    [\inner{m-6}!]^{2\sigma}}{\rho^{2(m-6)}} \bigg( \int_0^{t}  \big(\abs{\vec a(s)}_{ \rho,\sigma}^3+   \abs{\vec a(s)}_{ \rho,\sigma}^4 \big)ds+e^{CC_*^2}\int_0^{t}   \frac{\abs{\vec a(s)}_{\tilde \rho,\sigma}^2}{\tilde\rho-\rho} ds\bigg)
   	\end{multline}
   	and 
   	\begin{eqnarray} \label{ek2}
 		K_2\leq   C\frac{    [\inner{m-6}!]^{2\sigma}}{\rho^{2(m-6)}}  \bigg(\int_0^{t}   \abs{\vec a(s)}_{ \rho,\sigma}^3 ds
   	 + e^{CC_*^2}  \int_0^{t}   \frac{\abs{\vec a(s)}_{\tilde \rho,\sigma}^2}{\tilde\rho-\rho} ds\bigg).
 	\end{eqnarray}
It remains to treat $K_3$ given in \eqref{k1k2}.   
We
  write
  \begin{eqnarray*}
  	K_3=K_{3,1}+K_{3,2}+K_{3,3},
  \end{eqnarray*}
  with
  \begin{eqnarray*} 
  	 K_{3,1}&=&2m^2\int_0^{t}\Big( \sum_{j=0}^{[m/2]}{m\choose j}  (\partial_x^{j}\partial_z^2  u)\partial_x^{m-j}\mathcal U, \    \partial_x^m\lambda \Big)_{L^2}ds,\\
  	  K_{3,2}&=&2m^2\int_0^{t}\Big( \sum_{j =[m/2]+1}^{m-5}{m\choose j}  (\partial_x^{j}\partial_z^2  u)\partial_x^{m-j}\mathcal U, \    \partial_x^m\lambda \Big)_{L^2}ds,\\
  	 K_{3,3}&=& 2m^2\int_0^{t}\Big( \sum_{j=m-4}^{m}{m\choose j}  (\partial_x^{j}\partial_z^2  u)\partial_x^{m-j}\mathcal U, \    \partial_x^m\lambda \Big)_{L^2}ds.
  \end{eqnarray*} 
  The rest part is devoted to estimating the above terms $K_{3,1}, K_{3,2}$ and $K_{3,3}$.    
  Using \eqref{emix}, \eqref{uint+} and \eqref{elam} as well as \eqref{condi1} and recalling $\sigma\geq 3/2$ we follow the similar computation as in the proof of Lemma \ref{lemmac} to obtain
  \begin{equation*}
  \begin{aligned}
  	K_{3,1}&\leq 2m^2 \Big[\sum_{0\leq j\leq 1}+\sum_{j=2}^{[m/2]}\Big]{m\choose j} \int_0^{t}  \norm{(\partial_x^{j}\partial_z^2   u}_{L^\infty} \norm{\partial_x^{m-j}\mathcal U}_{L^2}  \norm{    \partial_x^m\lambda }_{L^2} ds\\
  	& \leq C C_* \frac{    [\inner{m-6}!]^{2\sigma}}{\rho^{2(m-6)}}  \int_0^{t}   \frac{\abs{\vec a(s)}_{\tilde \rho,\sigma}^2}{\tilde\rho-\rho} ds+  C\frac{    [\inner{m-6}!]^{2\sigma}}{\rho^{2(m-6)}}  \int_0^{t}   \abs{\vec a(s)}_{ \rho,\sigma}^3 ds,
  	\end{aligned}
  \end{equation*}
  and
\begin{eqnarray*}
\begin{aligned}
 K_{3,2}&\leq 2m^2 \sum_{j =[m/2]+1}^{m-5}{m\choose j}    \int_0^{t}\norm{(\partial_x^{j}\partial_z^2   u}_{L^2} \norm{\partial_x^{m-j}\mathcal U}_{L_{x,y}^\infty(L_z^2)}  \norm{    \partial_x^m\lambda }_{L_{x,y}^2(L_z^\infty)} ds \\
 &\leq C  \int_0^{t}\frac{    [\inner{m-6}!]^{\sigma}}{\rho^{m-6}} \abs{\vec a(s)}_{\rho,\sigma}^2\big(m \norm{    \partial_x^m\lambda }_{L^2}+m\norm{   \partial_z \partial_x^m\lambda }_{L^2}\big) ds\\
 &\leq {m^2\over 8} \int_0^{t} \norm{\partial_z\partial_x^{m} \lambda(s)}_{L^2}^2 ds +C\frac{    [\inner{m-6}!]^{2\sigma}}{\rho^{2(m-6)}}  \int_0^{t}  
 \inner{ \abs{\vec a(s)}_{ \rho,\sigma}^3 + \abs{\vec a(s)}_{ \rho,\sigma}^4}ds.
\end{aligned}
\end{eqnarray*}
Finally,
integration by parts gives
  \begin{eqnarray*}
  \begin{aligned}
  	K_{3,3}=& -2m^2\int_0^{t}\Big( \sum_{j=m-4}^{m}{m\choose j}  (\partial_x^{j}\partial_z   u) \partial_x^{m-j}\mathcal U, \    \partial_z\partial_x^m\lambda \Big)_{L^2}ds\\
  	&-2m^2\int_0^{t}\Big( \sum_{j=m-4}^{m}{m\choose j}  (\partial_x^{j}\partial_z   u) \partial_z\partial_x^{m-j}\mathcal U, \    \partial_x^m\lambda \Big)_{L^2}ds\\
  	\leq &  \frac{ m^2}{8}  \int_0^{t}\norm{ \partial_z\partial_x^m\lambda(s)}_{L^2}^2 dt+C\int_0^{t} \bigg[ m\sum_{j=m-4}^{m}{m\choose j}\norm{  (\partial_x^{j}\partial_z   u) \partial_x^{m-j}\mathcal U}_{L^2} \bigg]^2ds \\
  	&+2m^2 \sum_{j=m-4}^{m}{m\choose j}   \int_0^{t}\norm{\partial_x^{j}\partial_z   u}_{L^2} \norm{\partial_z\partial_x^{m-j}\mathcal U}_{L^\infty} \norm{     \partial_x^m\lambda }_{L^2}ds.  \end{aligned}
  \end{eqnarray*}
As for the last term on the right side,   we use \eqref{elam}, \eqref{emix} and the first assertion in  Lemma \ref{lemauxi} to compute
 \begin{multline*}
 	2m^2 \sum_{j=m-4}^{m}{m\choose j}   \int_0^{t}\norm{\partial_x^{j}\partial_z   u}_{L^2} \norm{\partial_z\partial_x^{m-j}\mathcal U}_{L^\infty} \norm{     \partial_x^m\lambda }_{L^2}ds\\
 	\leq e^{CC_*^2}\frac{    [\inner{m-6}!]^{2\sigma}}{\rho^{2(m-6)}}  \int_0^{t}   \frac{\abs{\vec a(s)}_{\tilde \rho,\sigma}^2}{\tilde\rho-\rho} ds. 
 \end{multline*} 
 Meanwhile we claim
 \begin{multline}\label{clam}
 \int_0^{t} \bigg[ m\sum_{j=m-4}^{m}{m\choose j}\norm{ \big(\partial_x^{m-j}\mathcal U\big) \partial_x^{j}\partial_z   u}_{L^2} \bigg]^2ds  \\
\leq e^{CC_*^2}  \frac{  [(m-6)!]^{2\sigma}  } {\rho^{2(m-6)}} \bigg(  \int_0^{t}  \big(\abs{\vec a(s)}_{\rho,\sigma}^2+\abs{\vec a(s)}_{\rho,\sigma}^4 \big)ds
 		+    	\int_0^{t}   \frac{ \abs{\vec a(s)}_{\tilde\rho,\sigma}^2}{\tilde\rho-\rho} ds\bigg).
 \end{multline}
The proof of \eqref{clam} is postponed to the end of this section.  Thus we  combine the above three inequalities to get
\begin{multline*}
  K_{3,3} \leq  \frac{ m^2}{8}  \int_0^{t}\norm{ \partial_z\partial_x^m\lambda}_{L^2}^2 ds  \\
  +e^{CC_*^2}   \frac{    [\inner{m-6}!]^{2\sigma}}{\rho^{2(m-6)}} \Big(   \int_0^{t}  \big(\abs{\vec a(s)}_{\rho,\sigma}^2+\abs{\vec a(s)}_{\rho,\sigma}^4 \big)ds
 		+     	\int_0^{t}   \frac{ \abs{\vec a(s)}_{\tilde\rho,\sigma}^2}{\tilde\rho-\rho} ds\Big),
  \end{multline*}
  which with upper bounds of $K_{3,1}$ and $K_{3,2}$ yields
\begin{multline*}
  K_3 \leq  \frac{ m^2}{4}  \int_0^{t}\norm{ \partial_z\partial_x^m\lambda}_{L^2}^2 ds  \\
  +e^{CC_*^2}   \frac{    [\inner{m-6}!]^{2\sigma}}{\rho^{2(m-6)}} \Big(   \int_0^{t}  \big(\abs{\vec a(s)}_{\rho,\sigma}^2+\abs{\vec a(s)}_{\rho,\sigma}^4 \big)ds
 		+     	\int_0^{t}   \frac{ \abs{\vec a(s)}_{\tilde\rho,\sigma}^2}{\tilde\rho-\rho} ds\Big).
  \end{multline*}
   Now we 
 put the above   estimate and the estimates \eqref{ek1}-\eqref{ek2} on $K_1, K_2 $ into \eqref{k1k2} to obtain
  \begin{multline*}
 	m^2 \norm{ \partial_x^m\lambda(t)}_{L^2}^2+m^2 \int_0^{t}	\big\| \partial_z  \partial_x^m\lambda(s)\big\|_{L^2}^2ds 
 	 \leq m^2 \norm{ \partial_x^{m+1}u_0}_{L^2}^2 \\
 	   + e^{CC_*^2}   \frac{    [\inner{m-6}!]^{2\sigma}}{\rho^{2(m-6)}} \Big(   \int_0^{t}  \big(\abs{\vec a(s)}_{\rho,\sigma}^2+\abs{\vec a(s)}_{\rho,\sigma}^4 \big)ds
 		+     	\int_0^{t}   \frac{ \abs{\vec a(s)}_{\tilde\rho,\sigma}^2}{\tilde\rho-\rho} ds\Big),
 \end{multline*}
which with  the fact that
\begin{eqnarray*}
 m^2\norm{\partial_x^{m+1}u_0}_{L^2}^2 \leq m^2 \frac{[(m-6)!]^{2\sigma}  }{  (2\rho_0)^{2(m-6)}} \norm{(u_0,v_0)}_{2\rho_0,\sigma}^2\leq 4^6 \frac{ [(m-6)!]^{2\sigma}  }{  \rho^{2(m-6)}}\norm{(u_0,v_0)}_{2\rho_0,\sigma}^2
\end{eqnarray*} 
due to the fact  $\rho<\rho_0$, gives the desired upper bound for $\partial_x^m\lambda$ with    
  $m\geq 6$. 
And the estimate for  $m\leq 5$ is straightforward.   
  The above estimate still holds with $\partial_x^m\lambda$ replaced by $\partial_y^m\lambda,$ which can be treated in the same way.  Thus in view of \eqref{realp} the desired estimate on $\partial^\alpha\lambda$ follows.   We can apply the similar argument   to get the   upper bounds of  $\partial^\alpha\delta, \partial^\alpha\tilde\lambda$     and $\partial^\alpha\tilde\delta$. Thus the proof  of  Proposition \ref{propum} will be completed if the assertion \eqref{clam} holds.

  \begin{proof}
  	[Proof of the assertion \eqref{clam}]  We write
  	\begin{multline*}
  	 \int_0^{t} \bigg[ m\sum_{j=m-4}^{m}{m\choose j}\norm{  ( \partial_x^{j}\partial_z   u)  \partial_x^{m-j}\mathcal U}_{L^2} \bigg]^2ds \\
  		 \leq C m^2 \int_0^{t} \norm{ \mathcal U  \partial_x^{m}\partial_z   u}_{L^2}^2ds+C m^4 \int_0^{t} \norm{ \big(\partial_x\mathcal U \big) \partial_x^{m-1}\partial_z   u}_{L^2}^2ds \\
  		 +C \sum_{j=m-4}^{m-2}m^{2(m-j+1)}\int_0^{t} \norm{ \partial_x^{j}\partial_z   u}_{L^2}^2\norm{ \partial_x^{m-j}\mathcal U  }_{L^\infty}^2ds.
  	\end{multline*}
  	As for the last term on the right side, we use  the first inequality in Lemma and the fact that $\sigma\geq 3/2$ to compute directly 
  	  \ref{lemauxi},
  	\begin{multline*}
  		 \sum_{j=m-4}^{m-2}m^{2(m-j+1)}\int_0^{t} \norm{ \partial_x^{j}\partial_z   u}_{L^2}^2\norm{ \partial_x^{m-j}\mathcal U  }_{L^\infty}^2ds\\
  		  \leq e^{CC_*^2}  m^6m^{-4\sigma}  \frac{ [(m-6)!]^{2\sigma}  } {  \rho^{2(m-6)}}\int_0^t\abs{\vec a(s)}^2ds  \leq e^{CC_*^2}\frac{ [(m-6)!]^{2\sigma}  } {  \rho^{2(m-6)}}\int_0^t\abs{\vec a(s)}^2ds.
  	\end{multline*}
  	Thus the desired \eqref{clam} will follow if we can show that 
  \begin{equation}\label{mp}
  \begin{aligned}
	&\int_0^t\norm{\mathcal U \partial_z\partial_x^m u }_{L^2}^2ds\\
	& \leq e^{CC_*^2}  \frac{  [(m-7)!]^{2\sigma}  } {\rho^{2(m-7)}} \Big(  \int_0^{t}  \big(\abs{\vec a(s)}_{\rho,\sigma}^2+\abs{\vec a(s)}_{\rho,\sigma}^4 \big)ds
 		+    	\int_0^{t}   \frac{ \abs{\vec a(s)}_{\tilde\rho,\sigma}^2}{\tilde\rho-\rho} ds\Big)
 		\end{aligned}
 		\end{equation}
 		and
 		\begin{multline}\label{mp1}
	\int_0^t\norm{(\partial_x\mathcal U) \partial_z\partial_x^{m-1} u }_{L^2}^2ds\\
	\leq e^{CC_*^2}  \frac{  [(m-8)!]^{2\sigma}  } {\rho^{2(m-8)}} \Big(  \int_0^{t}  \big(\abs{\vec a(s)}_{\rho,\sigma}^2+\abs{\vec a(s)}_{\rho,\sigma}^4 \big)ds
 		+    	\int_0^{t}   \frac{ \abs{\vec a(s)}_{\tilde\rho,\sigma}^2}{\tilde\rho-\rho} ds\Big).
  \end{multline}
The argument is quite similar as that for proving Lemma \ref{lemtan}. In fact, recalling $\psi_m$ is defined in \eqref{plam} and multiplying both side of \eqref{eqp} by $\mathcal U$ instead of $\comi z^\ell$ therein, we obtain
 \begin{multline*}
\big(\partial_t+u\partial_x+v\partial_y+w\partial_z-\partial_z^2\big)\mathcal U \psi_m  
 		= \mathcal U \inner{F_{m}- L_m}-2(\partial_z\mathcal U)\partial_z\psi_m\\
 		+\Big[\partial_x^2u+\partial_y\partial_x v-(\partial_zu)\partial_x \int_0^z\mathcal U d\tilde z -(\partial_zv)\partial_y \int_0^z\mathcal U d\tilde z +(\partial_xu+\partial_y v)\mathcal U \Big]\psi_m,
 \end{multline*}
 where we used the equation \eqref{eqfou}.   In view of the first assertion in Lemma \ref{lemauxi},  we   repeat  the argument for proving  \eqref{epsm} with slight modification to conclude, observing $\mathcal U\psi_m|_{t=0}=0,$
 \begin{multline*}
   \norm{\mathcal U(t)\psi_m(t)}_{L^2}^2 +\int_0^{t} \big\|\partial_z\big[\mathcal U(s)\psi_m(s)\big]\big\|_{L^2}^2 ds\\
 	\leq e^{CC_*^2}  \frac{   [(m-7)!]^{2\sigma}  }{\rho^{2(m-7)}}	\bigg(\int_0^{t}  \big(\abs{\vec a(s)}_{\rho,\sigma}^2+\abs{\vec a(s)}_{\rho,\sigma}^4 \big)ds 
 		+    \int_0^{t}   \frac{ \abs{\vec a(s)}_{\tilde\rho,\sigma}^2}{\tilde\rho-\rho} ds\bigg).
 \end{multline*}
 On the other hand, 
 \begin{eqnarray*}
 	\begin{aligned}
 		&\int_0^t\norm{\mathcal U \partial_z\partial_x^m u }_{L^2}^2ds\leq   2\int_0^t\norm{\partial_z\big[\mathcal U\partial_x^m u\big]}_{L^2}^2ds+C\int_0^t\norm{\big(\partial_z\mathcal U\big) \partial_x^m u}_{L^2}^2ds \\
 		&  \leq 4\int_0^t\bigg(\norm{\partial_z \big[\mathcal U\psi_m \big]}_{L^2}^2+\big\|\partial_z\big[\mathcal U (\partial_zu)\int_0^z\partial_x^{m-1} \mathcal U d\tilde z\big]\big\|_{L^2} ^2\bigg)ds\\
 		&\quad+C\int_0^t\norm{\big(\partial_z\mathcal U\big) \partial_x^m u}_{L^2}^2ds\\
 		&   \leq 4\int_0^t\norm{\partial_z \big[\mathcal U\psi_m \big]}_{L^2}^2ds+e^{CC_*^2}\int_0^t \norm{\partial_x^{m-1} \mathcal U}_{L^2} ^2ds+e^{CC_*^2}\int_0^t\norm{ \partial_x^m u}_{L^2}^2ds,
 \end{aligned}
 	\end{eqnarray*}
the second inequality following from \eqref{plam} and the last inequality  using Lemma \ref{lemauxi} and the assumption \eqref{condi1}.   Combining the above inequalities we conclude for any $m\geq 7$, using again \eqref{uint+} and \eqref{emix},
\begin{multline*}
	\int_0^t\norm{\mathcal U(s) \partial_z\partial_x^m u(s) }_{L^2}^2ds\\
	\leq e^{CC_*^2}  \frac{  [(m-7)!]^{2\sigma}  } {\rho^{2(m-7)}} \bigg(  \int_0^{t}  \big(\abs{\vec a(s)}_{\rho,\sigma}^2+\abs{\vec a(s)}_{\rho,\sigma}^4 \big)ds
 		+    	\int_0^{t}   \frac{ \abs{\vec a(s)}_{\tilde\rho,\sigma}^2}{\tilde\rho-\rho} ds\bigg).
\end{multline*}
We have proven \eqref{mp}. Similarly for \eqref{mp1}.    Thus the proof of \eqref{clam} is completed.
\end{proof}

\section{Proof of the main result}
\label{sec8}
We will prove in this section the main result on the existence and uniqueness for Prandtl system \eqref{prandtl}.     Since the proof is  similar as in 2D case once we
 have the a priori estimate,  we will only  give  a sketch, and refer to \cite[Section 7 and Section 8]{LY} for the detailed discussion.

\begin{proof}[Proof of Theorem \ref{maithm1}]
	 
	The proof  relies on the a priori estimates given in Theorems \ref{apriori}.  In order to  obtain  the existence  of solutions to   the   Prandtl equations \eqref{prandtl},  there are  two main 
ingredients,  and one is to investigate the existence of   approximate solutions to the regularized Prandtl system
\begin{equation}
\label{repradtl}
\left\{
\begin{aligned}
	&\partial_t u_\eps+\big(u_\eps \partial_x  +v_\eps\partial_y	 +w_\eps\partial_z\big)  u_\eps-\partial _{z}^2u_\eps-\eps\partial _{x}^2 u_\eps-\eps\partial _{y}^2 u_\eps=0,\\
 &\partial_t v_\eps+\big(u_\eps \partial_x  +v_\eps\partial_y	 +w_\eps\partial_z\big) v_\eps-\partial _{z}^2v_\eps-\eps\partial _{x}^2 v_\eps-\eps\partial _{y}^2 v_\eps=0,\\
 &(u_\eps,v_\eps)|_{z=0}=(0,0),\quad\lim_{z\rightarrow +\infty}   (u_\eps,v_\eps)=(0, 0),\\
 &(u_\eps,v_\eps)|_{t=0}=(u_0,v_0),
 \end{aligned}
 \right.
\end{equation}
with $w_\eps=-\int_0^z(\partial_x u_\eps+\partial_y v_\eps )d\tilde z.$       We remark that  the  regularized equations above share the same compatibility condition   \eqref{comcon} as the  original system  \eqref{prandtl}.  Another ingredient is to  derive a uniform estimate with respect to $\eps$ for the approximate solutions $(u_\eps, v_\eps).$

  The existence for the parabolic system \eqref{repradtl} is standard.  Indeed, 
suppose that  $(u_0,v_0)\in X_{2\rho_0,\sigma}.$  Then we can  construct,   following  the 
  similar scheme as that in \cite[Section 7]{LY},  a solution  $(u_\eps,v_\eps)\in L^\infty\big([0, \widetilde T_\eps];X_{3\rho_0/2, \sigma}\big)$ to \eqref{repradtl}    for some  $\widetilde T_\eps>0$ that may depend on $\eps$.  

It remains to  derive a uniform estimate for the approximate solutions $(u_\eps, v_\eps),$  so that we can remove the $\eps$-dependence of the lifespan $\widetilde T_\eps.$     
To do so we  define as in Subsection \ref{subaux}  the auxilliary fucntions $\mathcal U_\eps, \lambda_\eps, \delta_\eps$ in the similar way as  that for $\mathcal U, \lambda, \delta$  given in Subsection \ref{subaux},  with $(u,v,w)$ and the Prandtl operator therein replaced respectively by $(u_\eps, v_\eps, w_\eps)$  and the regularized Prandtl operator given above.  Similarly for $\widetilde{\mathcal U}_\eps, \tilde\lambda_\eps, \tilde\delta_\eps $.    Accordingly denote  \begin{eqnarray*}
	\vec a_\eps=(u_\eps, v_\eps,  \mathcal U_\eps, \widetilde{\mathcal U}_\eps, \lambda_\eps, \tilde \lambda_\eps, \delta_\eps, \tilde\delta_\eps) 
\end{eqnarray*}
and 
  define $\abs{\vec a_\eps}_{\rho,\sigma}$  similarly as that of  $\abs{\vec a}_{\rho,\sigma}$  (see Definition \ref{gevspace}).    Note that
  $$\vec a_{\eps}|_{t=0}=\inner{u_0,v_0, 0, 0, \partial_x u_0, \partial_y u_0,\partial_xv_0,\partial_yv_0}.$$
Then  
   we can verify directly that      
   \begin{eqnarray}\label{remarkdif}
   \forall\ \rho\leq \rho_0,\quad 	  \abs{\vec a_\eps(0)}_{\rho, \sigma}  \leq C_{\rho_0,\sigma}\norm{(u_0,v_0)}_{2\rho_0,\sigma}, 
   \end{eqnarray}
   with $C_{\rho_0,\sigma}$ a constant depending only on $\rho_0$ and $\sigma$.

   Let $\tau>1$ be a fixed number to be determined later. We define  
 \begin{equation}\label{trinormdef}
 \normm{\vec a_\eps}_{(\tau)}\stackrel{\rm def}{ =}\sup_{\rho, t} \inner{ \frac{ \rho_0-\rho- \tau t}{\rho_0-\rho}  }^{1/2}\abs{\vec a_\eps(t)}_{\rho,\sigma},
\end{equation}
 where the supremum is taken over all pairs $(\rho, t)$ such that $ \rho>0,\, 0\leq t \leq  \rho_0/(4\tau)$ and  $\rho+ \tau t <\rho_0$.  
 Leting $C_{\rho_0,\sigma}$ be the constant given in   \eqref{remarkdif}  and letting  $C_1\geq 1$ be the constant given in Theorem \ref{apriori} which depends only on $\rho_0,\sigma$ and the Sobolev embedding constants,  we denote
   \begin{eqnarray}\label{cstar}
 	C_0= 2\inner{C_{\rho,\sigma} +C_1} \norm{(u_0,v_0)}_{2\rho_0,\sigma}+1. 
 \end{eqnarray}
In the following discussion, we will use the bootstrap argument to prove the    assertion that  
 \begin{equation}\label{ac}
   \normm{\vec a_\eps}_{(\tau)} \leq C_0 /2	
 \end{equation}
for some $\tau$ large enough,  if the  condition  
 \begin{eqnarray}\label{ab}
  	 \normm{\vec a_\eps}_{(\tau)} \leq   C_0 
  \end{eqnarray} 
  is fulfilled.    
 
 \medskip
\noindent {\it Step 1)}.  Observe,  for any   $t\in  [0, \rho_0/(4\tau)]$,   
\begin{multline}\label{les}
\frac{\sqrt 2}{2}\norm{(u_\eps(t), v_\eps(t))}_{{\rho_0\over 2},\sigma}\leq \frac{\sqrt 2}{2}\abs{\vec a_\eps(t)}_{{\rho_0\over2},\sigma}\\
 \leq   \inner{ \frac{ \rho_0-{\rho_0\over2} - \tau t}{\rho_0-{\rho_0\over 2} }  }^{1/2}\abs{\vec a_\eps(t)}_{{\rho_0\over 2},\sigma}\leq \normm{\vec a_\eps}_{\inner{\tau}} .
\end{multline}  
Thus
under the condition \eqref{ab}, we have $\norm{(u_\eps(t), v_\eps(t))}_{\rho_0/2,\sigma}\leq \sqrt 2 C_0$ for any    $t\in [0,  \ \rho_0/(4\tau)]$ and thus it follows from  
 the  definition of $\norm{(u_\eps(t),v_\eps(t))}_{\rho_0/2,\sigma}$ that, for any $t\in [0,  \ \rho_0/(4\tau)]$,
\begin{equation*} 
 	  \sup_{\stackrel{0\leq j\leq 5}{\abs\alpha+j\leq 10}}  \Big(\big\|\comi z^{\ell+j} \partial^\alpha \partial_z^j   u_\eps(t)\big\|_{L^2}+\big\|\comi z^{\ell+j} \partial^\alpha \partial_z^j  v_\eps(t)\big\|_{L^2}\Big) \leq   \tilde C_{\rho_0,\sigma} C_0
\end{equation*}
with $\tilde C_{\rho_0,\sigma}\geq 1$ a constant depending only on $\rho_0$ and $\sigma$. 
 Then Assumption \ref{assmain} is fulfilled with $(u,v)$ therein replaced by $(u_\eps, v_\eps)$ and thus similar to Theorem \ref{apriori}  we  can  repeat the argument in Sections \ref{sec5}-\ref{seclamdelta} with minor modification to obtain  the following assertion:  for any $t\in [0,  \ \rho_0/(4\tau)]$  and   any pair $(\rho,\tilde\rho)$ with $0<\rho<\tilde\rho<\rho_0\leq 1,$
\begin{multline}\label{aee}
	\abs{\vec a_\eps (t)}_{\rho,\sigma}^2 
	\leq C_1  \norm{(u_0, v_0)}_{2\rho_0, \sigma}^2 \\
	+e^{C_2C_0^2}\bigg( \int_{0}^{t} \inner{\abs{\vec a_\eps(s)}_{\rho,\sigma}^2+\abs{\vec a_\eps(s)}_{\rho,\sigma}^4} \,ds 
	+  \int_{0}^{t}\frac{ \abs{\vec a_\eps(s)}_{\tilde\rho,\sigma}^2}{\tilde \rho-\rho}\,ds\bigg),
\end{multline}
where $C_2>0$ is a constant depending only on the numbers $\rho_0, \sigma$ and the Sobolev embedding constants but independent of $\eps$, and  the constant $C_1\geq 1$  is just the one  given in Theorem \ref{apriori}.  

\medskip
\noindent{\it Step 2)}.  
We let 
   $(\rho, t)$ be an arbitrary pair which is fixed at moment and satisfies that  $ \rho>0,\,  t\in[0, \rho_0/(4\tau)]$ and  $\rho+ \tau t<\rho_0.$
 Then it follows from the definition  \eqref{trinormdef}  of  $\normm{\vec a_\eps}_{(\tau) }$ that
   \begin{equation}\label{fiesonssd}
  \forall~0\leq s\leq  t,  \quad 	 \abs{\vec a_\eps(s)}_{\rho,\sigma}\leq  \normm{\vec a_\eps}_{(\tau)} \inner{ \frac{\rho_0-\rho} { \rho_0-\rho- \tau s} }^{1/2}.
   \end{equation}
     Furthermore,
we take in particular  such a $\tilde\rho(s)$ that 
\begin{eqnarray*}
\tilde \rho(s)=\frac{\rho_0+  \rho- \tau  s}{2}.
\end{eqnarray*}
Then direct calculation shows 
that 
\begin{eqnarray}\label{rhos}
 \forall~0\leq s\leq t ,\qquad \rho< \tilde \rho(s)  \quad{\rm and}\quad \tilde \rho(s)+ \tau  s <\rho_0,
\end{eqnarray}  
and
 \begin{eqnarray}\label{rhom}
 \forall~0\leq s\leq t ,\qquad \tilde\rho(s)-\rho=\frac{\rho_0-\rho - \tau  s}{2}=  \rho_0-\tilde \rho(s)- \tau s.
\end{eqnarray}
By the  inequalities in \eqref{rhos} and the second equality in \eqref{rhom} it follows  that,  for any $0\leq s\leq t,$
\begin{equation}\label{tilrho}
\begin{aligned}
\abs{\vec a_\eps(s)}_{\tilde\rho(s),\sigma}\leq \normm{\vec a_\eps}_{(\tau )}\inner{\frac{\rho_0-\tilde\rho(s)}{\rho_0-\tilde\rho(s)- \tau s}}^{1\over 2}\leq \normm{\vec a_\eps}_{(\tau )}\inner{\frac{2\inner{ \rho_0- \rho}}{\rho_0- \rho- \tau s}}^{ 1\over 2}.
\end{aligned}
\end{equation} 
Putting     \eqref{fiesonssd}  and  \eqref{tilrho}  into     the estimate \eqref{aee}  and using  the first equality in \eqref{rhom},  we have 
\begin{eqnarray*}
\begin{aligned}
 \abs{ \vec a_\eps(t)}_{ \rho,\sigma}^2\leq&  C_1 \norm{(u_0, v_0)}_{2\rho_0,\sigma}^2 +e^{C_2C_0^2}\normm{\vec a_\eps}_{(\tau )}^2\int_0^{t}   \frac{\rho_0-\rho  }{ \rho_0-\rho - \tau s} \,ds\\
& +e^{C_2C_0^2}\normm{\vec a_\eps}_{(\tau)}^2\Big(  \normm{\vec a_\eps}_{(\tau )}^2\int_0^{t}   \frac{\inner{\rho_0-\rho}^2}{\inner{\rho_0-\rho - \tau  s}^2}  ds +   \int_0^{t }   \frac{2^2 \inner{\rho_0-\rho}}{\inner{\rho_0-\rho - \tau s}^2}  ds\Big)\\
     \leq&  C_1  \norm{(u_0, v_0)}_{2\rho_0,\sigma}^2  +  \frac{e^{C_2C_0^2}(5+C_0^2)}{\tau} \normm{\vec a_\eps}_{(\tau)}^2 \frac{\rho_0-\rho} {\rho_0-\rho-\tau t},
   \end{aligned}
\end{eqnarray*}
where in the last inequality we have used the condition 
 \eqref{ab} and the fact that 
\begin{eqnarray*}
	  \frac{\rho_0-\rho  }{ \rho_0-\rho - \tau  s} \leq  \frac{\inner{\rho_0-\rho}^2}{\inner{\rho_0-\rho - \tau  s}^2}\leq \frac{    \rho_0-\rho  }{\inner{\rho_0-\rho - \tau  s}^2}.
\end{eqnarray*}
Thus we multiply both sides by the fact $\inner{\rho_0-\rho-\tau t}/\inner{\rho_0-\rho}$ and observe   $(\rho, t)$ is an arbitrary pair with $\rho>0$, $t\in[0, \rho_0/(4\tau)]$  and $\rho+\tau t< \rho_0$; this with $C_1\geq 1$ gives   
\begin{equation}\label{eslas}
	\normm{\vec a_\eps}_{(\tau)}  \leq   C_1  \norm{(u_0, v_0)}_{2\rho_0,\sigma}  
	+ \frac{ \sqrt{e^{C_2C_0^2}(5+C_0^2) }}{\sqrt{\tau}}\normm{\vec a_\eps}_{(\tau)}.
\end{equation}
Now we choose such a $\tau$ that 
\begin{eqnarray}\label{taua}
 	1-\frac{ \sqrt{e^{C_2C_0^2}(5+C_0^2) }}{\sqrt{\tau}}= \frac{C_1}{C_1+ C_{\rho,\sigma}}.
\end{eqnarray}
Then it follows from \eqref{eslas} that 
\begin{eqnarray*}
	\normm{\vec a_\eps}_{(\tau)}\leq \inner{C_1+ C_{\rho,\sigma}} \norm{(u_0, v_0)}_{2\rho_0, \sigma} \leq C_0/2,
\end{eqnarray*} 
recalling  $C_0$ is given by \eqref{cstar}.  This gives the desired assertion \eqref{ac} provided \eqref{ab} holds.  Thus by  the bootstrap argument  we conclude, with $\tau$ defined by \eqref{taua},  
\begin{eqnarray*}
	\normm{\vec a_\eps}_{(\tau)}\leq \inner{C_1+ C_{\rho,\sigma}} \norm{(u_0, v_0)}_{2\rho_0, \sigma}+1/2,
\end{eqnarray*} 
which with \eqref{les} yields 
\begin{eqnarray*}
	\forall \ t\in[0,   \rho_0/(4\tau)],\quad 
	\norm{(u_\eps(t), v_\eps(t))}_{\rho_0/ 2,\sigma}\leq \sqrt 2\inner{C_1+ C_{\rho,\sigma}} \norm{(u_0, v_0)}_{2\rho_0, \sigma} +\frac{\sqrt 2}{2}.
\end{eqnarray*}
Now letting  $\eps\rightarrow 0$ we have, by compactness arguments,  the limit $u$ of $u_\eps$ solves the equation \eqref{prandtl}.  We complete the existence part of Theorem \ref{maithm1}.    The uniqueness will follow from a similar argument as in \cite[Subsection 8.2]{LY} so we omit it here for brevity. Thus the proof of  Theorem \ref{maithm1} is completed.  
\end{proof}

   \bigskip
\noindent {\bf Acknowledgements.}
 The research of the first author was supported by NSFC (Nos. 11871054, 11771342, 11961160716) and  the Natural Science Foundation of Hubei Province (2019CFA007).  The second author  is partially  supported  by 
SITE (Center for Stability, Instability, and Turbulence at NYUAD)  and  by NSF-DMS 1716466.
The research of the third
author was  supported by the General Research Fund of Hong Kong, CityU No.11302518 and the  Fundamental Research Funds for the Central Universities No.2019CDJCYJ001.

        








 \frenchspacing
\bibliographystyle{cpam}


\end{document}